     \newcommand{\BF}{{\mathbb {F}}}
    \newcommand{\BQ}{{\mathbb {Q}}}
     \newcommand{\BZ}{{\mathbb {Z}}}
    \newcommand{\CO}{{\mathcal {O}}}
     \newcommand{\fH}{{\mathfrak{H}}}
    \newcommand{\fK}{{\mathfrak{K}}} 
    \newcommand{\fM}{{\mathfrak{M}}}
    \newcommand{\fS}{{\mathfrak{S}}} 
     \newcommand{\fX}{{\mathfrak{X}}}
    \newcommand{\corank}{{\mathrm{corank}}}
    \newcommand{\coker}{\mathrm{coker}~\!}\newcommand{\cyc}{{\mathrm{cyc}}}
    \newcommand{\Gal}{{\mathrm{Gal}}} \newcommand{\GL}{{\mathrm{GL}}}
    \newcommand{\Hom}{{\mathrm{Hom}}}
    \newcommand{\ord}{{\mathrm{ord}}} \newcommand{\rank}{{\mathrm{rank}}}
    \newcommand{\Sel}{{\mathrm{Sel}}}
    \newcommand{\Ext}{{\mathrm{Ext}}}
    \font\cyr=wncyr10
    \newcommand{\Sha}{\hbox{\cyr X}}
    \theoremstyle{plain}
    \newtheorem{thm}{Theorem}[section] \newtheorem{cor}[thm]{Corollary}
    \newtheorem{lem}[thm]{Lemma}  \newtheorem{prop}[thm]{Proposition}
    \newtheorem {conj}[thm]{Conjecture} \newtheorem{defn}[thm]{Definition}
\theoremstyle{remark} \newtheorem{remark}[thm]{Remark}
\theoremstyle{remark} 
\theoremstyle{remark}
    \numberwithin{equation}{section}
\begin{document}

\title{Non-commutative Iwasawa theory of abelian varieties over global function fields}

\author[Li-Tong Deng]{Li-Tong Deng}
\address{\textit{Li-Tong Deng}, Qiuzhen College, Tsinghua University, 100084, Beijing, China}
\email{\it dlt23@mails.tsinghua.edu.cn}

\author[Yukako Kezuka]{Yukako Kezuka}
\address{\textit{Yukako Kezuka}, Kanazawa University, Natural Science and Technology, Kakumamachi, Kanazawa, 920-1164, Ishikawa, Japan.}
\email{\it kezuka@se.kanazawa-u.ac.jp}

\author[Yong-Xiong Li]{Yong-Xiong Li}
\address{\textit{Yong-Xiong Li}, Yanqi Lake Beijing Institute of Mathematical Sciences and Applications,
 No. 544 Hefangkou
Village, Huaibei Town, Huairou District, 101408, Beijing, China}
\email{\it liyongxiong@bimsa.cn}

\author[Meng Fai Lim]{Meng Fai Lim}
\address{\textit{Meng Fai Lim}, School of Mathematics and Statistics, Hubei Key Laboratory of Mathematical Sciences, Central China Normal University, 430079, Wuhan, China}
\email{\it limmf@ccnu.edu.cn}

\subjclass[2020]{11R23 (primary), 11R34   (secondary)}
\keywords{Abelian varieties, Iwasawa theory, global function fields, Generalised Euler characteristics}

\begin{abstract}
Let $A$ be an abelian variety defined over a global function field $F$, and let $p$ be a prime distinct from the characteristic of $F$. Let $F_\infty$ be a $p$-adic Lie extension of $F$ that contains the cyclotomic $\mathbb{Z}_p$-extension $F^{\mathrm{cyc}}$ of $F$. In this paper,
we investigate the structure of the $p$-primary Selmer group $\mathrm{Sel}(A/F_\infty)$ of $A$ over $F_\infty$. We prove the $\mathfrak{M}_H(G)$-conjecture for $A/F_\infty$.
Furthermore, we show that both the $\mu$-invariant of the Pontryagin dual of the Selmer group $\mathrm{Sel}(A/F^\cyc)$ and the generalised $\mu$-invariant of the Pontryagin dual of the Selmer group $\mathrm{Sel}(A/F_\infty)$ are zero, therby proving Mazur's conjecture for $A/F$. We then relate the order of vanishing of the characteristic elements, evaluated at Artin representations, to the corank of the Selmer group of the corresponding twist of $A$ over the base field $F$. Assuming the finiteness of the Tate--Shafarevich group, we establish  that this corank equals the order of vanishing  of the $L$-function of $A/F$ at $s=1$.
Finally, we extend a theorem of Sechi---originally proved for elliptic curves without complex multiplication---to abelian varieties over global function fields. This is achieved by adapting the notion of generalised Euler characteristic, introduced by Zerbes for elliptic curves over number fields. This new invariant allows us, via Akashi series, to relate the generalised Euler characteristic of $\mathrm{Sel}(A/F_\infty)$ to the Euler characteristic of $\mathrm{Sel}(A/F^{\mathrm{cyc}})$.
\end{abstract}

\maketitle

\section{Introduction}\label{section0}

Non-commutative Iwasawa theory has emerged as a powerful framework for understanding deep arithmetic properties over number fields contained in a $p$-adic Lie extension and their precise relationship with special values of complex $L$-functions. The aim of the present paper is to extend the study of non-commutative Iwasawa theory to the setting of global function fields. To motivate our discussion, we begin by reviewing the number field context as developed by Coates, Fukaya, Kato, Sujatha, and Venjakob in \cite{c-non} (see also \cite{burns, BV, ven}).  In the number field setting, let $p$ denote an odd prime.  Given a compact $p$-adic Lie group $\mathcal{G}$, the Iwasawa algebra $\Lambda(\mathcal{G})$ of $\mathcal{G}$ is defined as the completed group ring
\[\Lambda(\mathcal{G})=\varprojlim \BZ_p[\mathcal{G}/\mathcal{U}],\]
where the inverse limit is taken over all open normal subgroups $\mathcal{U}$ of $\mathcal{G}$. Let $E$ be an elliptic curve defined over a number field $F$, with good ordinary reduction at all primes of $F$ lying above $p$. Let $F_\infty$ be a $p$-adic Lie extension of $F$ such that the cyclotomic $\BZ_p$-extension $F^{\mathrm{cyc}}$ of $F$ is contained in $F_\infty$, and the Galois group $G = \Gal(F_\infty/F)$ is a $p$-adic Lie group of positive dimension. Write $\Gamma=\Gal(F^{\mathrm{cyc}}/F)$, $H=\Gal(F_\infty/F^{\mathrm{cyc}})$, and define
\[X(E/F_\infty):=\Hom_{\rm cts}(\Sel(E/F_\infty),\BQ_p/\BZ_p),\]
the Pontryagin dual of the $p$-primary Selmer group $\Sel(E/F_\infty)$, endowed with its natural left $\Lambda(G)$-module structure.  In the case where $F_\infty=F^{\mathrm{cyc}}$, it is well known that $X(E/F^{\mathrm{cyc}})$ is a finitely generated $\Lambda(\Gamma)$-module. It is generally expected that $X(E/F^{\mathrm{cyc}})$ enjoys favorable module-theoretic properties, as reflected in the following conjecture of Mazur \cite{mazur}:

\begin{conj}[Mazur] The Iwasawa module $X(E/F^{\mathrm{cyc}})$ is a finitely generated torsion $\Lambda(\Gamma)$-module.
\end{conj}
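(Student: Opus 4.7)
The plan is to follow the Euler system approach pioneered by Kato, which provides the strongest unconditional progress toward Mazur's conjecture in the number field setting. The starting point is the construction of the Beilinson--Kato Euler system: a compatible family of classes $z_n \in H^1(F(\mu_{p^n}), T_p E)$ produced from cup products of Siegel units on modular curves, satisfying the standard norm relations as $n$ varies. Under the dual exponential map of Bloch--Kato, these classes interpolate the critical values $L(E, \chi, 1)$ for Dirichlet characters $\chi$ of $p$-power conductor, and thereby match the Amice--V\'elu--Vi\v{s}ik $p$-adic $L$-function of $E$ at all finite-order characters of $\Gamma$.

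With the Euler system in hand, I would apply the Kolyvagin--Rubin machinery, adapted to the cyclotomic setting by Rubin and by Mazur--Rubin, to bound the characteristic ideal of the strict Selmer group of $E$ over $F^{\mathrm{cyc}}$ by the ideal generated by the image of $z_1$ under the Perrin-Riou logarithm. The Poitou--Tate global duality sequence, together with the good ordinary hypothesis at primes above $p$ (which provides a rank-one unramified quotient of $T_p E$ locally), controls the discrepancy between the strict and the Greenberg Selmer groups, so one deduces that $X(E/F^{\mathrm{cyc}})$ has $\Lambda(\Gamma)$-rank zero. Finite generation over $\Lambda(\Gamma)$ then follows from a standard Nakayama argument: the $\Gamma$-coinvariants of $X(E/F^{\mathrm{cyc}})$ surject, via Mazur's control theorem, onto a group commensurable with $X(E/F)$, which is finitely generated over $\BZ_p$ by the weak Mordell--Weil theorem applied to $E(F)/p$ and the finiteness of $\Sha(E/F)[p]$-style arguments at the base level.

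The main obstacle is that the Beilinson--Kato construction is currently available only when $F = \BQ$ (and by cyclotomic induction when $F$ is abelian over $\BQ$). For a general number field $F$, producing an Euler system for $E/F$ is an open problem closely tied to Beilinson's conjectures and to the existence of suitable algebraic cycles on fibre products of universal elliptic curves over modular-type varieties associated to the base change of $E$; no systematic construction is presently known. A secondary complication arises if $E$ is supersingular at some prime above $p$: one must then replace the Greenberg Selmer group by Kobayashi's signed plus/minus variants, and adapt both the local Iwasawa-cohomology computation and the Kolyvagin divisibility step accordingly. In the function field analogue pursued in the present paper, one has the significant advantage that the corresponding $L$-function is a polynomial, so the torsion property can alternatively be attacked via crystalline cohomology and the Grothendieck--Ogg--Shafarevich formalism, sidestepping the need for an Euler system altogether.
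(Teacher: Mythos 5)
You have not produced (and could not have produced) a proof of the statement as it stands: in the paper this is Mazur's \emph{conjecture}, stated for an elliptic curve over a number field $F$, and the paper offers no proof of it — it only recalls Mazur's result when $\Sel(E/F)$ is finite and Kato's theorem for $F$ abelian over $\BQ$ as evidence. Your proposal is essentially a sketch of Kato's argument, and you yourself identify the fatal gap: the Beilinson--Kato Euler system exists only over $\BQ$ and its abelian extensions, so the Kolyvagin--Rubin descent has no input for a general base field $F$, and the torsionness assertion remains open there. Even within the abelian case your sketch is missing an essential ingredient: the Euler system bound is vacuous unless the zeta element is non-torsion in Iwasawa cohomology, which requires Rohrlich's nonvanishing theorem for the twisted $L$-values; and the good ordinary hypothesis you invoke is not what makes Kato's torsionness result work (it is needed for the control/Greenberg formalism, not for cotorsionness over $\BQ^{\mathrm{cyc}}$). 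The part of your argument that does work unconditionally — finite generation of $X(E/F^{\mathrm{cyc}})$ over $\Lambda(\Gamma)$ via Nakayama and the control theorem — is the easy, well-known half; the conjecture is about torsionness.

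What the paper actually proves is the function field analogue (its Theorems 1.2 and 3.4), and there the route is completely different from both your Euler-system sketch and your suggested crystalline/Grothendieck--Ogg--Shafarevich alternative (which is closer in spirit to the characteristic $p$ work of Kato--Trihan and Ochiai--Trihan). Since $p\neq \mathrm{char}(F)$, the key point is purely Galois-cohomological: $\mathrm{cd}_p\bigl(G_S(F^{\mathrm{cyc}})\bigr)=1$ (Lemma 3.2), and then a Hochschild--Serre plus global Euler--Poincar\'e characteristic computation (Lemma 3.3) shows that $H^1(G_S(F^{\mathrm{cyc}}),A_{p^\infty})$, hence $\Sel(A/F^{\mathrm{cyc}})$, is cofinitely generated over $\BZ_p$. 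This immediately gives that $X(A/F^{\mathrm{cyc}})$ is a finitely generated torsion $\Lambda(\Gamma)$-module with $\mu=0$, for an arbitrary abelian variety, with no $L$-values, no Euler system, no reduction hypotheses, and no semisimplicity or finiteness assumptions. So if your aim was the setting of this paper, the lesson is that the low cohomological dimension of the cyclotomic level replaces the entire Euler-system machinery; if your aim was the number field statement as literally quoted, no proof is currently available and your proposal correctly reduces to the known partial results rather than establishing the conjecture.
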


Mazur verified this conjecture in the case where $\Sel(E/F)$ is finite. The strongest evidence supporting this conjecture is a deep result of Kato \cite{kato}, who established the torsion property when $E$ is defined over $\BQ$ and $F$ is an abelian extension of $\BQ$. Taking into account the structure theory of $\Lambda(\Gamma)$-modules, Mazur's conjecture is equivalent to asserting that the quotient $X(E/F^\cyc)/ X(E/F^\cyc)(p)$ is finitely generated over $\BZ_p$, where $X(E/F^\cyc)(p)$ denotes the $\BZ_p$-torsion submodule of $X(E/F^\cyc)$. Motivated by this observation, Coates and collaborators introduced the category $\fM_H(G)$, consisting of all finitely generated $\Lambda(G)$-modules $M$ such that the quotient $M/M(p)$ by its $\BZ_p$-torsion submodule $M(p)$ is a finitely generated $\Lambda(H)$-module. They then conjectured that this category contains all the torsion $\Lambda(G)$-modules of arithmetic significance. More precisely, they formulated (see Section 5 and Conjecture 5.1 of \cite{c-non}):

\begin{conj}[$\fM_H(G)$-Conjecture] The Iwasawa module $X(E/F_\infty)$ belongs to the category $\fM_H(G)$.
\end{conj}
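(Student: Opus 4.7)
The plan is to deduce the $\fM_H(G)$-conjecture from the $\mu$-invariant vanishing of $X(A/F^{\cyc})$ (already established earlier in the paper) by combining a control theorem in the $H$-direction with Nakayama's lemma. Since the property of belonging to $\fM_H(G)$ is preserved under restriction from an open subgroup $G_0\le G$ with $H_0:=G_0\cap H$ open in $H$ (because $\Lambda(G)$ is finitely generated as a $\Lambda(G_0)$-module), a standard reduction allows us to replace $F$ by a finite extension inside $F_\infty$ and to assume henceforth that $H$ is pro-$p$. Write $Y:=X(A/F_\infty)/X(A/F_\infty)(p)$; the goal is to show that $Y$ is finitely generated over $\Lambda(H)$. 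Since $\Lambda(H)$ is then local with maximal ideal $\fm=(p,I_H)$, where $I_H$ denotes the augmentation ideal, the compact-module version of Nakayama's lemma reduces the problem to showing that $Y_H/pY_H$ is finite.

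Next I would establish a control theorem comparing $X(A/F_\infty)$ with $X(A/F^{\cyc})$. Applying the Hochschild--Serre spectral sequence to the definition of the Selmer group through global Galois cohomology, and using Shapiro's lemma to rewrite local cohomology at primes of $F^{\cyc}$ as cohomology at primes of $F_\infty$, one obtains an exact sequence of $\Lambda(\Gamma)$-modules
\[
0\to\Ker\to\Sel(A/F^{\cyc})\to\Sel(A/F_\infty)^{H}\to\Coker
\]
in which $\Ker$ is controlled by $H^1(H,A(F_\infty)_{p^\infty})$ and $\Coker$ is controlled by the $H$-cohomology of the local Selmer conditions at the finitely many primes of $F$ of bad reduction for $A$. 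Since $H$ is a compact $p$-adic Lie group acting on cofinitely generated $\BZ_p$-modules, each of these cohomology groups is cofinitely generated over $\BZ_p$, so its Pontryagin dual is a finitely generated $\BZ_p$-module, hence $\Lambda(\Gamma)$-torsion with trivial $\mu$-invariant.

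Dualising and combining this with the earlier theorem of the paper that $X(A/F^{\cyc})$ is $\Lambda(\Gamma)$-torsion with $\mu(X(A/F^{\cyc}))=0$ (so that $X(A/F^{\cyc})/pX(A/F^{\cyc})$ is finite), one deduces that $X(A/F_\infty)_H$ is a finitely generated $\BZ_p$-module modulo its $p$-primary part. Applying the same control diagram modulo $p^n$ for all $n$ shows that $X(A/F_\infty)(p)$ is annihilated by a uniformly bounded power of $p$, so $Y_H$ agrees with $X(A/F_\infty)_H$ up to a bounded $p$-torsion module and $Y_H/pY_H$ is finite. Nakayama's lemma now yields that $Y$ is finitely generated over $\Lambda(H)$, which is precisely the statement that $X(A/F_\infty)\in\fM_H(G)$.

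The main obstacle in this plan is the construction of the control sequence with sufficiently well-behaved error terms. While the function field setting simplifies matters by removing archimedean contributions, a careful local analysis is still required at primes of $F$ of bad reduction for $A$, where the decomposition subgroup $G_v\subset G$ may itself be a positive-dimensional $p$-adic Lie group; bounding the $H$-cohomology of the corresponding local Selmer quotients, together with verifying the uniform boundedness of the exponent of $X(A/F_\infty)(p)$, constitute the technical heart of the argument.
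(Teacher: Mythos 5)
Your overall strategy (pass to an open subgroup so that $H$ is pro-$p$, compare $\Sel(A/F^{\cyc})$ with $\Sel(A/F_\infty)^H$ via Hochschild--Serre and local restriction maps, and then apply topological Nakayama over $\Lambda(H)$) is a known and essentially workable reduction, but it rests entirely on an input that you never prove: that $X(A/F^{\cyc})$ is $\Lambda(\Gamma)$-torsion with $\mu=0$, equivalently that $\Sel(A/F^{\cyc})$ is cofinitely generated over $\BZ_p$. You describe this as ``already established earlier in the paper'', but it is not an earlier, independent result: it is the second assertion of the very theorem being proved, and in the paper both assertions are deduced simultaneously from the same lemma. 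The reduction ``$\mu=0$ over $F^{\cyc}$ plus finiteness properties of $H^i(H,A_{p^\infty}(F_\infty))$ and of the local error terms implies the $\fM_H(G)$-conjecture'' was already available in the number-field literature (Coates--Sujatha); the new content of the theorem in the function-field setting is precisely the \emph{unconditional} proof of the cyclotomic statement. So, as written, your argument reduces the theorem to its hardest part and leaves that part unproved.

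By contrast, the paper uses no Selmer-level control theorem here at all. Its key observation is that $\mathrm{cd}_p(G_S(F^{\cyc}))=1$; combining the degenerate Hochschild--Serre spectral sequence for $H$ with a global Euler--Poincar\'e characteristic computation (which vanishes since $\mathrm{char}(F)\neq p$ and there are no archimedean places), one shows that $H^1(G_S(F^{\cyc}),A_p)$ is finite, hence that $H^1(G_S(F_\infty),A_{p^\infty})$ is cofinitely generated over $\Lambda(H)$. Since $\Sel(A/F_\infty)$ is a subgroup of this $H^1$, the dual $X(A/F_\infty)$ is finitely generated over $\Lambda(H)$ --- strictly stronger than membership in $\fM_H(G)$ --- and the cyclotomic $\mu=0$ statement falls out of the same computation, rather than being an input. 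Two further points about your write-up: the detour through $Y=X(A/F_\infty)/X(A/F_\infty)(p)$ and the claim that $X(A/F_\infty)(p)$ has uniformly bounded exponent (``applying the control diagram modulo $p^n$'') are both unnecessary, and the latter is unjustified as stated; with your inputs, $\coker\bigl(\Sel(A/F^{\cyc})\to \Sel(A/F_\infty)^H\bigr)$ is already cofinitely generated over $\BZ_p$ (note one must use that the primes of $S$ are finitely decomposed in $F^{\cyc}/F$ so the local error terms are finite sums), so $X(A/F_\infty)_H$ itself is finitely generated over $\BZ_p$ and Nakayama gives $X(A/F_\infty)$ finitely generated over $\Lambda(H)$ outright.
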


This conjecture is crucial, as it enables the definition of a characteristic element for $X(E/F_\infty)$, which is essential for formulating the main conjectures in non-commutative Iwasawa theory (see \cite{c-non, BV}). Coates \mbox{\textit{et al.}\,}further demonstrated how these characteristic elements relate to the Euler characteristics of $X(E/F_\infty)$ via the theory of Akashi series introduced in \cite{coates-sujatha-schneider}. This connection has been further explored by Zerbes in a series of papers \cite{zerbes1, zerbes2, zerbes3}, where she related the characteristic element to the generalised Euler characteristic of $X(E/F_\infty)$. The notion of the generalised Euler characteristic is an extension of the classical Euler characteristic. The idea first appeared in \cite{coates-sujatha-schneider} and was later developed and systematically studied by Zerbes in \cite{zerbes2}.

We now shift our focus to the function field setting, which forms the central theme of this paper. In view of the well-known analogy between number fields and global function fields, one naturally expects that Iwasawa-theoretic phenomena should also manifest in the function field setting.  Indeed, several works have explored this direction (see \cite{BV, ochitri, Ray, sechi, TV, v1, Wi}).
From now on, let $p$ denote a fixed prime, where we allow $p=2$. Throughout this paper, $F$ will denote a global function field, that is, a finite extension of the field $k(T)$ of rational functions in one variable over a finite field $k$. We always assume that the characteristic $\mathrm{char}(F)$ of $F$ is distinct from our fixed prime $p$. Let $\bar{F}$ be a separable closure of $F$. The cyclotomic $\BZ_p$-extension $F^{\rm cyc}$ of $F$ is obtained by adjoining to $F$ the unique $\BZ_p$-constant field extension of $k$. Writing $\mu_{p^\infty}$ for the group of $p$-power roots of unity in an algebraic closure $\bar{k}$ of $k$, the field $F^{\rm cyc}$  is the unique $\BZ_p$-extension of $F$ contained in $F(\mu_{p^\infty})$. Following the analogy with the number field setting, we define an \textit{admissible} $p$-adic Lie extension $F_\infty/F$ as a Galois extension satisfying the following properties:
\medskip
\begin{enumerate}\label{adm-ext}
\item  $F_\infty$ contains the cyclotomic $\BZ_p$-extension $F^\mathrm{cyc}$ of $F$.
\item $G = \Gal(F_\infty/F)$ is a compact $p$-adic group without $p$-torsion.
\item The extension $F_\infty/F$ is unramified outside a finite set of primes.
\end{enumerate}

It is worth noting that a global function field $F$ with $\mathrm{char}(F)\neq p$ admits only one $\BZ_p$-extension, namely the cyclotomic one (see \cite[Proposition 10.3.20]{nsw}). Thus, any admissible extension of dimension at least two is necessarily non-commutative, highlighting the relevance of non-commutative Iwasawa theory in the function field context. In contrast, when $\mathrm{char}(F) = p$, there are infinitely many geometric (i.e., not containing any constant field extension) $\BZ_p$-extensions of $F$ (see \cite[Theorem 3]{GK}), which marks a significant distinction between the number field and function field cases.

We now return to the main results of this paper.  Our first main result is as follows: 

\begin{thm} \label{introMHG} Let $p$ be any prime. Let $A$ be an abelian variety over a global function field $F$ with characteristic prime to~$p$, and let $F_\infty/F$ be an admissible $p$-adic Lie extension. Then the $\mathfrak{M}_H(G)$-conjecture holds for the Pontryagin dual $X(A/F_\infty)$ of the Selmer group $\Sel(A/F_\infty)$.
Furthermore,  the Pontryagin dual $X(A/F^\cyc)$ of the Selmer group $\Sel(A/F^\cyc)$ has $\mu$-invariant equal to zero, and the generalised $\mu$-invariant of $X(A/F_\infty)$ is zero. In particular, Mazur's conjecture holds for $A/F$.
\end{thm}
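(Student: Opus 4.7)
The plan is to reduce both assertions to the single statement that $X(A/F^\cyc)$ is finitely generated as a $\BZ_p$-module. Granting that fact, vanishing of the $\mu$-invariant is automatic, since a finitely generated $\Lambda(\Gamma)$-module that is also finitely generated over $\BZ_p$ admits no elementary divisor of the form $\Lambda(\Gamma)/p^n$ in its structure (such a summand has infinite $\BZ_p$-rank). The $\fM_H(G)$-statement will then follow from a non-commutative Nakayama argument over $\Lambda(H)$, as explained below.

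For the key input, let $X/k$ denote the smooth projective model of $F$, pick a dense open $U \subset X$ on which $A$ has good reduction, and recall that $F^\cyc$ corresponds to the constant-field extension $X \times_k k^{(p)}$, where $k^{(p)}$ is the unique $\BZ_p$-extension of the finite field $k$. Because $\mathrm{char}(F) \neq p$, the group scheme $A[p^\infty]$ is lisse in the \'{e}tale topology on $U$, and the standard finiteness theorem for constructible $\BZ_p$-sheaves on a smooth curve over an algebraically closed field yields that $H^1_\et(U \times_k \bar k, A[p^\infty])$ is cofinitely generated over $\BZ_p$. Since $\Gal(\bar k/k^{(p)}) \cong \wh\BZ^{(p')}$ has pro-order prime to $p$, the inflation--restriction sequence identifies the corresponding cohomology over $U \times_k k^{(p)}$ with the $\Gal(\bar k/k^{(p)})$-invariants of the module above, which remains cofinitely generated over $\BZ_p$. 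Cutting out the Selmer submodule by the local conditions at the finitely many primes of bad reduction, whose local terms are themselves cofinitely generated over $\BZ_p$, one concludes that $\Sel(A/F^\cyc)$ is cofinitely generated over $\BZ_p$, equivalently $X(A/F^\cyc)$ is finitely generated over $\BZ_p$.

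For the $\fM_H(G)$-conjecture, I would compare $X(A/F_\infty)_H$ with $X(A/F^\cyc)$ via the restriction sequence for Selmer groups with respect to $H = \Gal(F_\infty/F^\cyc)$. The kernel and cokernel of the natural map $X(A/F_\infty)_H \to X(A/F^\cyc)$ are controlled by Galois cohomology of $A[p^\infty]$ over $F_\infty$ and its local analogues; since $F_\infty/F$ is unramified outside a finite set of primes and $G$ has no $p$-torsion, these error terms are finitely generated over $\BZ_p$. Combined with the previous paragraph this shows that $X(A/F_\infty)_H$ is finitely generated over $\BZ_p$, so $X(A/F_\infty)/(p,I_H)X(A/F_\infty)$ is finite. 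The topological Nakayama lemma over $\Lambda(H)$ then promotes this to finite generation of $X(A/F_\infty)$ over $\Lambda(H)$, which is strictly stronger than membership in $\fM_H(G)$.

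The main obstacle is the first step, and within it the delicate point is the precise matching between the Galois-cohomological definition of the Selmer group and its \'{e}tale-cohomological incarnation on $X \times_k k^{(p)}$, with careful bookkeeping of the N\'{e}ron component groups at primes of bad reduction. Compared with the number field situation the essential simplification is that $p \neq \mathrm{char}(F)$, so $A[p^\infty]$ is everywhere \'{e}tale on $U$ and the full strength of the $\ell$-adic finiteness theorems (with $\ell = p$ here) is directly available, avoiding the wild-ramification pathologies above $p$ that make the analogous number field result so deep.
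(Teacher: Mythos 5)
Your proposal is correct in substance, and it proves the same strengthened statement the paper does (finite generation of $X(A/F_\infty)$ over $\Lambda(H)$, plus cofinite generation of $\Sel(A/F^\cyc)$ over $\BZ_p$), but it reaches the key finiteness inputs by a genuinely different route. For the cyclotomic-level input you invoke the geometric finiteness theorem for \'{e}tale cohomology of $p$-power torsion sheaves on the curve over $\bar k$ and then descend to $k^{(p)}$ using that $\Gal(\bar k/k^{(p)})$ has pro-order prime to $p$; the paper stays inside Galois cohomology: it first shows $\mathrm{cd}_p(G_S(F^{\cyc}))=1$ (Lemma \ref{cdpcyc}, which plays exactly the structural role of your prime-to-$p$ descent) and then proves that $H^1(G_S(F^{\cyc}),A_{p^\infty})$ is cofinitely generated over $\BZ_p$ by an $\BF_p[[\Gamma]]$-corank count which reduces, via Hochschild--Serre, to the vanishing of the global Euler--Poincar\'e characteristic of $A_p$ over $F$ (Lemma \ref{corankglo}). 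Your geometric argument is a perfectly good substitute and makes the source of the finiteness very transparent; note, however, that the ``delicate bookkeeping'' you worry about largely evaporates, since once $H^1(G_S(F^{\cyc}),A_{p^\infty})\simeq H^1_{\et}(U\times_k k^{(p)},A[p^\infty])$ is known to be cofinitely generated over $\BZ_p$, the Selmer group is simply a subgroup of it and no analysis of local conditions or component groups is needed. For the $\fM_H(G)$ step you compare $X(A/F_\infty)_H$ with $X(A/F^\cyc)$ by a control-type snake-lemma argument (which requires bounding the local terms $H^1(H_w,A_{p^\infty}(F_{\infty,w}))$ as well as $H^i(H,A_{p^\infty}(F_\infty))$, and controlling the cokernel by passing to the image of the global-to-local map, with $\coker\beta$ injecting into $H^2(H,A_{p^\infty}(F_\infty))$) before applying topological Nakayama to $X(A/F_\infty)$; the paper short-circuits this by showing directly that all of $H^1(G_S(F_\infty),A_{p^\infty})$ is cofinitely generated over $\Lambda(H)$ and again using that $\Sel(A/F_\infty)$ is a submodule, so no local error terms ever enter. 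Two small points you should make explicit: pass to an open pro-$p$ subgroup of $G$ (harmless, since finite generation over the smaller Iwasawa algebra suffices) before invoking Nakayama over $\Lambda(H)$, and record that $X(A/F^\cyc)$ being finitely generated over $\BZ_p$ in particular makes it $\Lambda(\Gamma)$-torsion, which is what the $\mu=0$ assertion presupposes.
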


\bigskip

We now make several remarks. This theorem completely resolves Mazur's conjecture and the $\mathfrak{M}_H(G)$-conjecture in the setting of global function fields. This presents a stark contrast to the number field case.  It has long been known that the Pontryagin dual of the Selmer group over the cyclotomic $\BZ_p$-extension of a number field can have a positive $\mu$-invariant (see \cite[\S 10, Example 2]{mazur}, \cite{drinen}). By contrast, the theorem shows that such a phenomenon never occurs in the function field setting when $\mathrm{char}(F)\neq p$. We also note that Sechi \cite{sechi} previously proved this result for elliptic curves $E$ without complex multiplication, defined over a global function field $F$, where $F_\infty$ is obtained by adjoining the $p$-power torsion points of $E$ to $F$, under the assumption that $\Sel(E/F)$ is finite.

In the case where $\mathrm{char}(F)=p$, we similarly define $F^{\rm cyc}$ to be the unique constant field $\BZ_p$-extension of $F$.
Let $F_\infty/F$ be a $p$-adic Lie extension satisfying conditions analogous to (1), (2) and (3) described above. For an abelian variety $A/F$, we assume that all primes of bad reduction are ordinary and lie among the primes that ramify in $F_\infty/F$. Under the additional assumption that the $\mu$-invariant of $\Sel(A/F^{\rm cyc})$ vanishes, Ochiai and Trihan proved in \cite[Theorem 1.9]{ochitri} that the $\mathfrak{M}_H(G)$-conjecture holds in this setting.
However, recent work by Lai et al.\ \cite{LLSTT} suggests that $\mu$ may be positive when $\mathrm{char}(F) = p$, thereby highlighting a contrast with the $\mathrm{char}(F) \ne p$ case, which is the main focus of this paper.

Our proof of Theorem \ref{introMHG} differs entirely from previous works on the $\fM_H(G)$-conjecture. A key element of our proof is the observation that the cohomological dimension of $\Gal(\bar{F}/F^\cyc)$ is equal to one. From this starting point, our method proceeds via a purely cohomological approach. (Upon completing our work, the authors learned that Witte had proven a similar result on the $\mathfrak{M}_H(G)$-conjecture using entirely different methods (see \cite{Wi}); we thank Jishnu Ray and Fabien Trihan for pointing this out.)
While Witte's result on the $\mathfrak{M}_H(G)$-conjecture may initially appear more general, as his approach involves the use of Selmer complexes, our result recovers his, given the well-documented relationship between Selmer groups and Selmer complexes (see \cite[Propositions 4.2.35 and 4.3.13]{FK}; see also \cite[Lemma 3.3.2]{LimMHGcong}). Of course, this same relationship implies that our result follows from that of Witte. That said, we would like to elaborate on the motivation behind the alternative approach adopted in our paper. First of all, the focus of our work differs significantly from that of Witte's. While Witte's paper is centered on proving a main conjecture, our study is dedicated to investigating the precise structure of Selmer groups and the order of vanishing of their characteristic elements. More precisely, our direct approach proves that the global and local cohomology groups in the defining sequence of the Selmer group are cofinitely generated over $\Lambda(H)$ (see Lemma 3.3 and the discussion preceding Lemma 5.3), a fact which does not appear to be explicitly established in Witte's work. In contrast, within the number field context, these cohomology groups are not cofinitely generated over $\Lambda(H)$ (see \cite[Theorems 3.2 and 4.1]{OV}), necessitating an elaborate snake lemma argument for the computation of the Akashi series of a Selmer group (as in \cite{coates-sujatha-schneider, zerbes2, zerbes3}). In the function field context, our observation that these groups are $\Lambda(H)$-cofinitely generated allows for a more straightforward computation of the Akashi series of Selmer groups by directly computing the Akashi series of the global and local cohomology groups (see Section 5 for details). Additionally, we show that the generalised $\mu$-invariant of the dual Selmer group $X(A/F_\infty)$, as well as the $\mu$-invariant of the dual Selmer group $X(A/F^\cyc)$, are both equal to zero (see Corollary \ref{genmu}). We note that the vanishing of the $\mu$-invariant is already implicit in Witte~\cite[Corollary~4.21]{Wi}, where it is shown that the dual Selmer group lies in a certain category $\mathbf{N}_H$, which essentially also implies that $\mu=0$. Finally, without relying on a non-commutative main conjecture, we present results related to the Birch--Swinnerton-Dyer conjecture, based on Tate's fundamental work, under an assumption on the finiteness of the Tate--Shafarevich group (see Theorem \ref{thm2intro} and the subsequent discussion).

In light of Theorem \ref{introMHG}, we can now attach a characteristic element $\xi_A$ to $X(A/F_\infty)$ in the sense of Coates \textit{et al.} This enables us to evaluate $\xi_A$ at a given Artin representation $\rho: G\to \GL_n(\mathcal{O})$, where $\mathcal{O}$ is the ring of integers of a finite extension of $\BQ_p$ in $\overline{\BQ}_p$, via the homomorphism $\Phi_\rho$  (for details, see Section \ref{section7}, particularly \eqref{Phi_rho}). This yields an element in the field of fractions of the formal power series ring $\mathcal{O}[[T]]$ in an indeterminate $T$ with coefficients in $\mathcal{O}$.
To facilitate explicit calculations in the discussion that follows, we introduce the following assumption:
\[\mathbf{(G)}: \quad H^i(G, A_{p^\infty}(F_\infty)) \text{ is finite for all } i \geq 1.\]

We are now in a position to state our next result, presented here in a slightly simplified form. For a more precise andgeneral version, we refer readers to Theorem \ref{ordreg}, Corollary \ref{corbsd}, and Theorem \ref{ordrho}.

\begin{thm}\label{thm2intro} Let $A$ be an abelian variety defined over a global function field $F$ with $\mathrm{char}(F)\neq p$, and let $F_\infty/F$ be an admissible $p$-adic Lie extension. Denote by $\xi_A$ the characteristic element of $X(A/F_\infty)$, and let $\mathrm{reg}_F$ denote the regular representation of $F$. Assuming $(\mathbf{G})$ holds, we have
$$\ord_{T=0}(\Phi_{\mathrm{reg}_F}(\xi_A))\geq \corank_{\BZ_p}(\Sel(A/F)).$$
Moreover, equality holds if Greenberg's semi-simplicity conjecture (Conjecture \ref{greenberg}) holds for $A/F$. Additionally, if we assume the finiteness of $\Sha(A/F)$, then
\[ \ord_{T=0}(\Phi_{\mathrm{reg}_F}(\xi_A))= \ord_{s=1}L(A/F,s). \]
\end{thm}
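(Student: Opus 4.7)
The strategy is to compute $\Phi_{\mathrm{reg}_F}(\xi_A)$ via the Akashi series formalism, reduce the problem to the cyclotomic level $X(A/F^\cyc)$, and then extract the order of vanishing from the classical structure theory of torsion $\Lambda(\Gamma)$-modules.

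The first step is to invoke the formalism of characteristic elements for modules in $\fM_H(G)$: up to a unit in an appropriate localisation of $\Lambda(\Gamma)$, the value $\Phi_{\mathrm{reg}_F}(\xi_A)$ agrees with the Akashi series
\[
\mathrm{Ak}_H(X(A/F_\infty)) \;=\; \prod_{i\geq 0} \mathrm{char}_{\Lambda(\Gamma)}\bigl(H_i(H,X(A/F_\infty))\bigr)^{(-1)^i}.
\]
The Hochschild--Serre spectral sequence for $1\to H\to G\to \Gamma\to 1$ applied to the defining sequence of the Selmer group yields a control diagram comparing $\Sel(A/F_\infty)^H$ to $\Sel(A/F^\cyc)$, with error terms governed by the groups $H^i(H, A_{p^\infty}(F_\infty))$. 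Hypothesis $(\mathbf{G})$, together with the fact that $\Gamma$ has $p$-cohomological dimension one, implies that these error terms are finite. Dualising, $H_0(H, X(A/F_\infty))$ is pseudo-isomorphic to $X(A/F^\cyc)$ and $H_i(H, X(A/F_\infty))$ is $\Lambda(\Gamma)$-pseudo-null for $i\geq 1$, so that
\[
\Phi_{\mathrm{reg}_F}(\xi_A) \;\equiv\; \mathrm{char}_{\Lambda(\Gamma)}(X(A/F^\cyc)) \pmod{\Lambda(\Gamma)^\times}.
\]

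Next, Theorem \ref{introMHG} shows that $X(A/F^\cyc)$ is a finitely generated torsion $\Lambda(\Gamma)$-module with vanishing $\mu$-invariant. After choosing a topological generator of $\Gamma$ to identify $\Lambda(\Gamma)\cong\BZ_p[[T]]$, the structure theorem provides a pseudo-isomorphism $X(A/F^\cyc)\sim\bigoplus_j \Lambda(\Gamma)/(f_j^{a_j})$ with distinguished irreducible polynomials $f_j$. Since $T$ is the only distinguished irreducible with a root at $T=0$, a direct inspection gives
\[
\ord_{T=0}\bigl(\mathrm{char}_{\Lambda(\Gamma)}(X(A/F^\cyc))\bigr) \;=\; \sum_{j:\,f_j = T} a_j \;\geq\; \#\{j:f_j = T\} \;=\; \rank_{\BZ_p}\bigl(X(A/F^\cyc)^\Gamma\bigr),
\]
with equality precisely when every exponent $a_j$ attached to a $T$-summand equals $1$ --- which is Greenberg's semi-simplicity Conjecture \ref{greenberg}. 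To identify this rank with the corank of $\Sel(A/F)$, I would apply the function-field control theorem: since $F^\cyc/F$ is the constant field $\BZ_p$-extension and is unramified everywhere, the natural map $\Sel(A/F)\to \Sel(A/F^\cyc)^\Gamma$ has finite kernel and cokernel. Combined with the standard observation that for a torsion $\Lambda(\Gamma)$-module with $\mu=0$ the $\BZ_p$-ranks of $\Gamma$-invariants and $\Gamma$-coinvariants coincide, this yields $\rank_{\BZ_p}(X(A/F^\cyc)^\Gamma)=\corank_{\BZ_p}(\Sel(A/F))$, and establishes both the inequality and the equality-under-Greenberg statement.

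For the last assertion, the Kummer sequence gives
\[
\corank_{\BZ_p}\Sel(A/F) \;=\; \rank A(F) + \corank_{\BZ_p}\Sha(A/F)[p^\infty],
\]
and finiteness of $\Sha(A/F)$ kills the second summand. The deep theorems of Tate, Milne, Schneider and Kato--Trihan on the BSD conjecture in the function field setting then supply $\rank A(F)=\ord_{s=1}L(A/F,s)$ under the same finiteness hypothesis, completing the proof.

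The main technical obstacle is the opening step: verifying that the higher Akashi terms $H_i(H, X(A/F_\infty))$ ($i\geq 1$) are $\Lambda(\Gamma)$-pseudo-null. Although the fact that the cohomological dimension of $\Gal(\bar{F}/F^\cyc)$ equals one (highlighted in the paper following Theorem \ref{introMHG}) substantially truncates the relevant spectral sequences, the careful bookkeeping needed to translate finiteness of $H^i(G, A_{p^\infty}(F_\infty))$ into pseudo-nullity of the Akashi homology is the heart of the argument.
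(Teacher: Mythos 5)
Your overall route --- evaluating $\xi_A$ at $\mathrm{reg}_F$ via the Akashi series, descending to $F^{\cyc}$, invoking the structure theorem together with the control theorem, and finishing with Tate--Milne--Kato--Trihan --- is the same as the paper's. However, your pivotal intermediate claim is wrong as stated: it is not true in general that $H_0(H,X(A/F_\infty))$ is pseudo-isomorphic to $X(A/F^{\cyc})$, nor that $\Phi_{\mathrm{reg}_F}(\xi_A)\equiv \mathrm{char}_{\Lambda(\Gamma)}(X(A/F^{\cyc}))$ up to a unit of $\Lambda(\Gamma)$. The error terms in the control diagram between $\Sel(A/F^{\cyc})$ and $\Sel(A/F_\infty)^H$ are not governed solely by the groups $H^i(H,A_{p^\infty}(F_\infty))$: the snake lemma also brings in $\ker\gamma=\bigoplus_{v\in S}\ker\bigl(J_v(A/F^{\cyc})\to J_v(A/F_\infty)^H\bigr)$, and at every prime $v\in S'$ (infinite inertia in $G$) one has $J_v(A/F_\infty)=0$, so the whole of $J_v(A/F^{\cyc})$ enters the error term. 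These local modules generally have positive $\BZ_p$-corank (equal to $\corank_{\BZ_p}A_{p^\infty}(F^{\cyc}_w)$; compare Corollary \ref{Howson rank}), so they contribute nontrivial characteristic polynomials. The correct identity is that of Theorem \ref{akashithm}, namely $\mathrm{Ak}(\Sel(A/F_\infty))=f_{\Sel(A/F^{\cyc})}\prod_{i\geq1}f_{H^i(H,A_{p^\infty}(F_\infty))}^{(-1)^i}\prod_{v\in S'}f_{J_v(A/F^{\cyc})}$, and it is precisely these $S'$-factors that later produce the local terms in Theorem \ref{thm3intro}. Relatedly, Hypothesis $(\mathbf{G})$ does not give pseudo-nullity of $H_i(H,X(A/F_\infty))$ for $i\geq1$: since $(\mathbf{G})$ concerns $G$-cohomology, the Hochschild--Serre argument only yields finiteness of $H^i(H,A_{p^\infty}(F_\infty))^\Gamma$, i.e.\ that the corresponding characteristic series have no zero at $T=0$ --- which is all that is needed, but strictly weaker than what you assert.

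Your conclusion about $\ord_{T=0}$ can be salvaged, but only by adding the missing local step: for $v\in S'$ one must show that $f_{J_v(A/F^{\cyc})}$ does not vanish at $T=0$, i.e.\ that $J_v(A/F^{\cyc})^\Gamma$ is finite. The paper does this via the inflation--restriction surjection $H^1(F_v,A_{p^\infty})\twoheadrightarrow J_v(A/F^{\cyc})^\Gamma$ combined with Tate local duality and Mattuck's theorem, which identify $H^1(F_v,A_{p^\infty})$ with the dual of the finite group $A^*_{p^\infty}(F_v)$; this input is entirely absent from your plan. You also use, silently, the exactness of the localisation sequences over $F^{\cyc}$ and $F_\infty$ with surjective final map (needed both for the snake-lemma diagram and for the multiplicativity of Akashi series along the defining sequence); this is Theorem \ref{thmsurj}, whose proof via Poitou--Tate and Jannsen's spectral sequence is a nontrivial ingredient that must be cited or reproved. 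The remaining steps --- the count of $T$-factors in the structure theorem, the equality of $\BZ_p$-ranks of $\Gamma$-invariants and coinvariants, the control theorem, and the deduction from finiteness of $\Sha(A/F)$ --- do match the paper's argument.
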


\bigskip

In fact, by the deep theorem of Kato--Trihan \cite{kato-trihan}, the finiteness of the $\ell$-primary part of the Tate-Shafarevich group $\Sha(A/F)(\ell)$ for a single prime number $\ell$ is sufficient to establish the final equality in the theorem.

\bigskip

To state our final result, we first introduce the following stronger finiteness assumption. Recall that we write $H=\Gal(F_\infty/F^{\rm cyc})$.
\[\mathbf{(H)}: \quad H^i(H, A_{p^\infty}(F_\infty)) \text{ is finite for all } i \geq 0.\]

We provide a criterion for this condition to hold in Proposition \ref{propH} of the paper (see also \cite[Section 5]{zerbes2} and \cite[Remark 6.6]{LimMHGcong} for cases where ($\mathbf{H}$) is known to hold in the number field context). Let $S$ be a nonempty finite set of primes of $F$ containing all the primes where the abelian variety $A$ has bad reduction, as well as all the primes of $F$ that ramify in $F_\infty$. Let $S'$ denote the subset of primes in $S$ whose inertia group in $G$ is infinite. We write $A^*$ for the dual abelian variety. The following is our final main result.

\begin{thm}\label{thm3intro}
Let $A$ be an abelian variety defined over a global field $F$ with $\mathrm{char} F\neq p$ and let $F_\infty$ be an admissible $p$-adic Lie extension of $F$. Assume that $(\mathbf{H})$ holds. Then $\Sel(A/F_\infty)$ has a finite generalised $G$-Euler characteristic $\chi(G, \Sel(A/F_\infty))$ if and only if $\Sel(A/F^{\rm cyc})$ has a finite generalised $\Gamma$-Euler characteristic $\chi(\Gamma, \Sel(A/F^\cyc))$. Moreover, if this is the case, then
\[\chi(G, \Sel(A/F_\infty)) = \chi(\Gamma, \Sel(A/F^\cyc)) \prod_{v\in S'}\frac{\#  A^*_{p^\infty}(F_v)}{\# H^1(\Gamma_w, A_{p^\infty}(F^\cyc_w))},\]
where, for each $v\in S'$, $w$ is a fixed prime of $F^\cyc$ lying above $v$, and $\Gamma_w$ denotes the decomposition subgroup of $\Gamma$ at $w$.
\end{thm}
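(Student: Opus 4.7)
\textbf{Proof proposal for Theorem \ref{thm3intro}.} The approach is to reduce the generalised $G$-Euler characteristic to a generalised $\Gamma$-Euler characteristic via the Hochschild--Serre spectral sequence attached to the extension $1 \to H \to G \to \Gamma \to 1$, and then to compare $\Sel(A/F_\infty)^H$ with $\Sel(A/F^{\rm cyc})$ through a snake-lemma analysis of the defining Selmer diagram, identifying the resulting discrepancy as a product of local factors supported on $S'$. Two facts make the calculation tractable in the function-field setting: $\Gamma \cong \BZ_p$ has $p$-cohomological dimension one (so finite $\Gamma$-modules have trivial $\Gamma$-Euler characteristic), and $\Gal(\bar F / F^{\rm cyc})$ itself has $p$-cohomological dimension one, the same structural input that drove Theorem \ref{introMHG}.

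I would first establish, for any discrete $G$-module $M$ whose relevant Euler characteristics are defined in the generalised sense of Zerbes, the multiplicativity
\[
\chi(G, M) = \prod_{q \geq 0} \chi\bigl(\Gamma, H^q(H, M)\bigr)^{(-1)^q},
\]
and then show that with $M = \Sel(A/F_\infty)$ the factors with $q \geq 1$ collapse to $1$. Concretely, Hypothesis $(\mathbf{H})$ says $H^q(H, A_{p^\infty}(F_\infty))$ is finite for all $q \geq 0$; combined with the Hochschild--Serre spectral sequence for $H$ acting on $H^1(F_\infty, A_{p^\infty})$ and the cohomological dimension one of $\Gal(\bar F / F^{\rm cyc})$, this forces $H^q(H, H^1(F_\infty, A_{p^\infty}))$ to be finite for every $q \geq 1$. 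The Selmer subgroup then inherits the same finiteness, so the higher factors in the alternating product have trivial $\Gamma$-Euler characteristic, and the problem reduces to computing $\chi(\Gamma, \Sel(A/F_\infty)^H)$ in terms of $\chi(\Gamma, \Sel(A/F^{\rm cyc}))$.

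For the remaining control step, I would take $H$-invariants of the defining Selmer diagram
\[
0 \to \Sel(A/L) \to H^1(L, A_{p^\infty}) \to \bigoplus_v H^1(L_v, A)(p)
\]
for $L = F^{\rm cyc}$ and $L = F_\infty$, and feed the outcome into the snake lemma. The middle vertical map has finite kernel and cokernel by the previous paragraph. The main content, and the main obstacle, is the prime-by-prime analysis of the local cokernel on the right. At a place $v \notin S'$, the decomposition subgroup of $H$ at a prime above $v$ is finite and one checks via Tate local duality that the associated $\Gamma$-Euler contribution is trivial. At a place $v \in S'$, however, the local decomposition group above $v$ is a genuinely non-commutative $p$-adic Lie group, and one must unwind a local Hochschild--Serre spectral sequence together with local Tate duality: the $H^2$ term of the local Galois cohomology is identified with $A^*_{p^\infty}(F_v)^\vee$ --- this is precisely where the dual abelian variety enters --- while the inflation kernel contributes $H^1(\Gamma_w, A_{p^\infty}(F^{\rm cyc}_w))$. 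Taking the $\Gamma$-Euler characteristic locally produces exactly the factor $\# A^*_{p^\infty}(F_v)/\# H^1(\Gamma_w, A_{p^\infty}(F^{\rm cyc}_w))$. Assembling these local contributions with the global reduction of the previous paragraph yields both the equivalence of finiteness of $\chi(G, \Sel(A/F_\infty))$ and $\chi(\Gamma, \Sel(A/F^{\rm cyc}))$ and the stated product formula; the delicate point will be checking that the local cohomology groups at primes in $S'$ remain finite so that the Euler characteristics are well defined termwise, which I expect to extract from Hypothesis $(\mathbf{H})$ together with the finiteness of $A^*_{p^\infty}(F_v)$.
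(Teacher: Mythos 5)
Your overall architecture (Hochschild--Serre for $1\to H\to G\to\Gamma\to 1$, snake lemma on the Selmer diagrams, local factors at $S'$) matches the paper's, but two load-bearing steps are missing or would fail as written. First, the claim that ``the Selmer subgroup then inherits the same finiteness'' is not valid: finiteness of $H^q(H,H^1(G_S(F_\infty),A_{p^\infty}))$ for $q\geq 1$ does not pass to the submodule $\Sel(A/F_\infty)$, since $H$-cohomology is not monotone under inclusions; from the long exact sequence one only gets $H^1(H,\Sel(A/F_\infty))$ sandwiched against $\mathrm{coker}\big(H^1(G_S(F_\infty),A_{p^\infty})^H\to(\bigoplus_v J_v(A/F_\infty))^H\big)$, which is a priori infinite. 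What makes the paper's argument work is the \emph{short exactness} of the localisation sequence $0\to\Sel(A/F_\infty)\to H^1(G_S(F_\infty),A_{p^\infty})\to\bigoplus_{v\in S}J_v(A/F_\infty)\to 0$ (and the same over $F^{\mathrm{cyc}}$), i.e.\ Theorem \ref{thmsurj}, proved via Jannsen's spectral sequence and duality, together with $J_v(A/F_\infty)=0$ for $v\in S'$ and $H^i(H_w,-)=0$ for $i\geq 1$ at $v\notin S'$; only then does one get $H^i(H,\Sel(A/F_\infty))\simeq H^{i+2}(H,A_{p^\infty}(F_\infty))$ for $i\geq 1$ and hence finiteness from Hypothesis $(\mathbf{H})$. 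Your diagram, written only as a left-exact sequence, never invokes this surjectivity, yet both the right-exactness of the top row over $F^{\mathrm{cyc}}$ and the identification of $\ker\gamma$ with $\bigoplus_{v\in S'}J_v(A/F^{\mathrm{cyc}})$ depend on it.

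Second, the proposed multiplicativity $\chi(G,M)=\prod_{q\geq 0}\chi(\Gamma,H^q(H,M))^{(-1)^q}$ for \emph{generalised} Euler characteristics is asserted without proof and is not formal: the paper explicitly remarks (after Lemma \ref{fingec}) that generalised Euler characteristics are not even multiplicative in short exact sequences, so this needs an argument. The workable substitute is Proposition \ref{3-5} (Zerbes), which reads off $\chi(G,Y)$ from the leading term of the Akashi series, combined with the Akashi-series computation of Theorem \ref{akashithm}; that is where the $S'$-factors actually enter (as the surviving $f_{J_v(A/F^{\mathrm{cyc}})}$). Moreover, your formula presupposes that all the Euler characteristics occurring are defined, so it cannot by itself yield the ``finite if and only if finite'' assertion; the paper obtains that separately from the $d^0$-criterion of Lemma \ref{3-3}(2) together with Lemma \ref{fingec} applied to $0\to\ker\alpha\to\Sel(A/F^{\mathrm{cyc}})\to\Sel(A/F_\infty)^H\to\mathrm{coker}\,\alpha\to 0$, after showing $\mathrm{coker}\,\alpha$ has finite $\Gamma$-Euler characteristic. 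Finally, the local duality step is garbled: $H^2(F_v,A_{p^\infty})=0$ here, and the duality you need is that $H^1(F_v,A_{p^\infty})\simeq H^1(F_v,A)(p)$ is Pontryagin dual to $A^*_{p^\infty}(F_v)$ (finite by Mattuck); it is the vanishing of $H^2$ over both $F_v$ and $F^{\mathrm{cyc}}_w$ that forces $H^1(\Gamma,J_v(A/F^{\mathrm{cyc}}))=0$ and hence gives the factor $\#A^*_{p^\infty}(F_v)/\#H^1(\Gamma_w,A_{p^\infty}(F^{\mathrm{cyc}}_w))$.
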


\medskip

This result is a natural analogue of Zerbes \cite{zerbes2}, who proved a similar statement in the context of number fields. In the function field context, our theorem generalises earlier computations by Sechi \cite{sechi} and Valentino \cite{v1}. When $A$ is an elliptic curve, our formula recovers Sechi's result, in which Euler factors of the complex $L$-function appear on the right-hand side. For explicit examples, we refer the reader to \cite{sechi}.

\bigskip

We conclude this introduction with an outline of the paper.  Section~\ref{section2} collects foundational material on Iwasawa algebras, which will be used throughout the paper. The proof of Theorem \ref{introMHG} is given in Section~\ref{section3}. The key observation is that the Galois group $\Gal(F_S/F^{\mathrm{cyc}})$, where $F_S$ is the maximal algebraic extension of $F$ contained in $\bar{F}$ which is unramified outside $S$, has $p$-cohomological dimension one (see Lemma \ref{cdpcyc}). In Section~\ref{section4}, we establish the surjectivity of the localisation map defining the Selmer group $\Sel(A/F_\infty)$, using Jannsen's spectral sequence and Nekov\'{a}\v{r}'s duality theorem. This is a key input in our computation of Akashi series in Section~\ref{section5}. In Section~\ref{section6}, we prove a control theorem, showing in particular that the restriction map $\Sel(A/F)\to \Sel(A/F^{\mathrm{cyc}})^{\Gamma}$ has a finite kernel and cokernel. This allows us to relate the order of vanishing at $T=0$ of the characteristic power series of $\Sel(A/F^{\mathrm{cyc}})$ to the $\BZ_p$-corank of the Selmer group $\Sel(A/F)$.   Section~\ref{section7} relates the order of vanishing of characteristic elements evaluated at Artin representations to  Selmer coranks and their twists over intermediate subextensions of the $p$-adic Lie extension. This yields Theorem \ref{thm2intro}. In Section~\ref{section8}, we introduce the definition of the generalised Euler characteristic, following the framework of Zerbes \cite{zerbes2}. Finally, in Section~\ref{section9}, we compute the generalised Euler characteristics under the assumption $(\mathbf{H})$, thereby establishing Theorem \ref{thm3intro}.

\section{Prerequisites on Iwasawa algebras}\label{section2}

Throughout, let $p$ denote a fixed prime. Let $G$ be a compact $p$-adic Lie group without $p$-torsion. The Iwasawa algebra of $G$ over $\BZ_p$ is defined by
\[ \Lambda(G)= \varprojlim \BZ_p[G/U], \]
where $U$ runs over the open normal subgroups of $G$, and the inverse limit is taken with respect to the natural projection maps. Assume for now that $G$ is a pro-$p$ group. Then the ring $\Lambda(G)$ admits a skew field of fractions $\mathcal{Q}(G)$, which allows us to define the rank of a $\Lambda(G)$-module $M$ as follows:
\[ \rank_{\Lambda(G)}(M) = \dim_{\mathcal{Q}(G)}\mathcal{Q}(G)\otimes_{\Lambda(G)}M. \]
The module $M$ is said to be torsion if $\rank_{\Lambda(G)}(M)=0$.

Now, suppose that $G$ is not necessarily pro-$p$. By Lazard's theorem (see \cite[Corollary 8.34]{uni-gp}),
one can always find an open subgroup $G'$ of $G$ that is pro-$p$. In this case, a $\Lambda(G)$-module $M$ is said to be torsion over $\Lambda(G)$ if it is torsion over $\Lambda(G')$ in the sense defined above. We shall also make use of the following equivalent characterisation of a torsion $\Lambda(G)$-module: $\Hom_{\Lambda(G)}(M,\Lambda(G))=0$ (cf. \cite[Remark 3.7]{ven02}).

\section{Selmer groups and the $\fM_H(G)$-Conjecture}\label{section3}

Let $p$ be any prime number. Throughout, let $F$ denote a global function field of characteristic coprime to $p$, and write $k$ for its constant field. Suppose that $A$ is an abelian variety defined over $F$. Let $F_\infty$ be a $p$-adic Lie extension of $F$ satisfying the following admissibility conditions:
\begin{enumerate}[(i)]
\item $F_\infty$ contains the cyclotomic $\BZ_p$-extension $F^\mathrm{cyc}$ of $F$.
\item $G = \Gal(F_\infty/F)$ is a compact $p$-adic group without $p$-torsion.
\item $F_\infty/F$ is unramified outside of a finite set of primes.
\end{enumerate}
In particular, the extension $F^{\mathrm{cyc}}$ itself satisfies these conditions. Let $S$ be a nonempty finite set of primes of $F$ containing all primes where the abelian variety $A$ has
bad reduction, as well as all primes of $F$ that ramify in $F_\infty$.

Let $\bar{F}$ denote a separable closure of $F$. For any extension $\fK$ inside $\bar{F}$, we define the $p$-primary Selmer group $\Sel(A/\fK)$ using Galois cohomology as follows:
\begin{equation}\label{1-f-1}
  \Sel(A/\fK)={\rm ker}\left(H^1(\fK,A_{p^\infty})\rightarrow\prod_w H^1(\fK_w,A)\right),
\end{equation}
where $A_{p^\infty}$ denotes the group of all $p$-power division points in $A(\bar{F})$. Note that when $\fK$ is an infinite extension of $F$, each $\fK_w$ is defined as the union of completions at $w$ of all finite extensions of $F$ contained in $\fK$. If $\fK$ is Galois over $F$, then the natural action of $\Gal(\fK /F)$ on $H^1(\fK,A_{p^\infty})$ endows $\Sel(A/\fK)$ with the structure of a $\Gal(\fK/F)$-module.\\

Let $F_S$ denote the maximal extension of $F$ inside $\bar{F}$ which is unramified outside $S$. By the choice of $S$, we know $F_\infty$ is contained in $F_S$.
For any extension $K$ of $F$ contained in $F_S$, we write $G_S(K)=\Gal(F_S/K)$. Now, for a finite extension $L/F$ and for each prime $v$ of $F$, define
\[J_v(A/L)=\bigoplus_{w\mid v}H^1(L_w,A)(p),\]
where the direct sum is taken over all primes $w$ of $L$ lying above $v$. Since $L_w$ has characteristic different from $p$, Kummer theory gives the natural identification \mbox{$H^1(L_w,A)(p) \simeq H^1(L_w, A_{p^\infty})$.} We shall make use of this identification without any further comment.

Let $\bar{L}_w$ be a separable closure of $L_w$. By choosing an embedding $\bar{L}\subset \bar{L}_w$, we may view $\Gal(\bar{L}_w/L_w)$ as a subgroup of $\Gal(\bar{L}/L)$. This gives rise to the following localisation map:
\begin{equation}\label{2-1-f-1}
  \lambda_S(A/L): H^1(G_S(L),A_{p^\infty}) \to \bigoplus_{v\in S}J_v(A/L),
\end{equation}
which is induced by restriction.

For an infinite separable extension $K$ of $F$ contained in $F_S$, we define
\[J_v(A/K)=\lim_{\longrightarrow}J_v(A/L),\]
where the direct limit is taken over all finite extensions $L$ of $F$ contained in $K$, with respect to the natural restriction maps. Correspondingly, we define the localisation map
\[\lambda_S(A/K)=\lim_{\longrightarrow}\lambda_S(A/L).\]

We will use the following result on Selmer groups, which can be proved in the same way as for elliptic curves over number fields (see \cite[Lemma 2.3]{coates}).

\begin{prop}\label{2-1-1}
Let $K$ be an extension of $F$ contained in $F_S$. Then $\Sel(A/K)$ fits into the exact sequence:
\[
0 \longrightarrow \mathrm{Sel}(A/K) \longrightarrow H^1(G_S(K), A_{p^\infty})
\stackrel{\lambda_S(A/K)}{\longrightarrow}
\bigoplus_{v \in S} J_v(A/K).
\]
\end{prop}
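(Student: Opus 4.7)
The plan is to verify two things: first, that the Selmer group $\Sel(A/K)$ (a priori sitting inside $H^1(K,A_{p^\infty})$) is actually contained in the image of the inflation $H^1(G_S(K), A_{p^\infty}) \hookrightarrow H^1(K, A_{p^\infty})$; and second, that inside $H^1(G_S(K), A_{p^\infty})$, the Selmer classes are exactly those annihilated by the localisation maps at primes in $S$. This parallels the proof of Coates' \cite[Lemma 2.3]{coates}; the only non-formal input is the local behaviour at primes outside $S$.

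For the first point, I first observe that by the criterion of N\'{e}ron--Ogg--Shafarevich, since $A$ has good reduction at every $v \notin S$ and every residue characteristic of $F$ is different from $p$, the field $F(A_{p^\infty})$ is contained in $F_S$; thus $A_{p^\infty}$ is a $G_S(F)$-module and inflation yields an injection $H^1(G_S(K), A_{p^\infty}) \hookrightarrow H^1(K, A_{p^\infty})$ whose image consists of the classes unramified outside $S$. To show a Selmer class $\xi$ lands there, I reduce to showing that for every prime $w$ of $K$ above a $v \notin S$, the restriction $\xi|_{K_w}$ is unramified. From the Kummer sequence
\[0 \to A(K_w) \otimes \BQ_p/\BZ_p \to H^1(K_w, A_{p^\infty}) \to H^1(K_w, A)(p) \to 0,\]
it suffices to identify the Kummer image with $H^1_\mathrm{unr}(K_w, A_{p^\infty})$. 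The inclusion of the Kummer image into the unramified part follows because, under good reduction with $p$ coprime to the residue characteristic, $A(K_w^\mathrm{unr})$ is $p$-divisible (the formal-group layer is pro-$\ell$, and $A(\bar{k}_v)$ is $p$-divisible), so a $p^n$-th preimage of $P \in A(K_w)$ can be chosen in $A(K_w^\mathrm{unr})$ and its Kummer cocycle is trivial on $I_w$. The reverse inclusion follows by mapping an unramified class to $H^1_\mathrm{unr}(K_w, A)(p)$, which vanishes by Lang's theorem applied to the (connected) reduction of $A$ over the finite field $k_v$.

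Combining these, a Selmer class $\xi$ lies in $H^1(G_S(K), A_{p^\infty})$, and the defining condition at $v \in S$ is exactly that $\xi$ maps to zero in $J_v(A/K)$; conversely, any class in $H^1(G_S(K), A_{p^\infty})$ killed by $\lambda_S(A/K)$ also maps to zero in $H^1(K_w, A)$ for $v \notin S$, because its local restriction factors through $H^1_\mathrm{unr}(K_w, A_{p^\infty})$, which by the identification above lies in the Kummer kernel. For an infinite $K \subseteq F_S$, the statement follows by passing to the direct limit, using that Galois cohomology and the localisation maps commute with direct limits of coefficients.

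The main technical step is the identification $H^1_\mathrm{unr}(K_w, A_{p^\infty}) = \ker(H^1(K_w, A_{p^\infty}) \to H^1(K_w, A))$ at primes of good reduction; this combines $p$-divisibility of $A(K_w^\mathrm{unr})$ with Lang's theorem. It is worth emphasising that this works uniformly in the function field setting because, thanks to $\mathrm{char}(F) \neq p$, \emph{every} residue characteristic is different from $p$, so no prime plays the role of the $p$-adic primes of the number field case.
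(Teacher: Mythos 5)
Your proof is correct and follows essentially the same route as the paper, which simply invokes the standard argument of Coates' Lemma 2.3: identify the Kummer image with the unramified classes at primes of good reduction away from the residue characteristic (using $p$-divisibility of $A(K_w^{\mathrm{unr}})$ and Lang's theorem), so that the local conditions outside $S$ are automatic for classes coming from $H^1(G_S(K),A_{p^\infty})$. One small wording slip: the final limit is taken over the finite subextensions $L$ of $K$ (i.e.\ over the tower of fields, not over the coefficients), but this does not affect the argument.
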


The remainder of this section  is devoted to  proving the $\mathfrak{M}_H(G)$-conjecture for the Pontryagin dual $X(A/F_\infty)$ of the $p$-primary Selmer group $\Sel(A/F_\infty)$. To this end, we begin with a crucial observation.

\begin{lem}\label{cdpcyc} We have
$$\mathrm{cd}_p(G_S(F^{\mathrm{cyc}}))=1.$$

\end{lem}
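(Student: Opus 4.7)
The plan is to prove the upper bound $\mathrm{cd}_p(G_S(F^{\cyc})) \leq 1$ by reducing to a ``geometric'' Galois group and invoking Artin vanishing on an affine curve; the lower bound follows from the existence of a single non-trivial $p$-primary cohomology class, which is standard (for instance, one may exhibit, after the harmless prime-to-$p$ enlargement to $F^{\cyc}(\mu_p)$, a non-trivial Kummer class associated to a function with a pole in $S$).

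For the upper bound, I would first write down the short exact sequence
\[
1 \longrightarrow G_S(F\bar k) \longrightarrow G_S(F^{\cyc}) \longrightarrow \Gal(\bar k / k^{\cyc}) \longrightarrow 1,
\]
valid because every constant-field extension is unramified everywhere (so $F\bar k \subset F_S$) and because $F \cap \bar k = k$. The quotient $\Gal(\bar k / k^{\cyc}) \cong \prod_{\ell \neq p} \BZ_\ell$ is pro-$(p')$, so $\mathrm{cd}_p$ of it is $0$; the Hochschild--Serre spectral sequence thus reduces the problem to showing $\mathrm{cd}_p(G_S(F\bar k)) \leq 1$. Next, identify $G_S(F\bar k)$ with the \'etale fundamental group $\pi_1^{\et}(U_{\bar k})$, where $U = X \setminus S$ with $X$ the smooth projective model of $F/k$; this identification is routine, since extensions of $F\bar k$ unramified outside $S$ correspond precisely to finite \'etale covers of $U_{\bar k}$. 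Because $S$ is nonempty, $U_{\bar k}$ is an affine variety of dimension $1$. Artin's affine vanishing theorem (SGA~4, XIV, Cor.~3.2) then gives $H^i_{\et}(U_{\bar k}, \CF) = 0$ for $i \geq 2$ and any locally constant torsion sheaf $\CF$ of order prime to $\mathrm{char}(k)$; applied to the sheaf attached to an arbitrary finite $p$-primary $\pi_1^{\et}(U_{\bar k})$-module (legitimate because $p \neq \mathrm{char}(F)$ by standing hypothesis), this yields $\mathrm{cd}_p(G_S(F\bar k)) \leq 1$.

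The main obstacle is largely cosmetic: one must invoke an Artin-vanishing statement that applies to locally constant (not just constant) $p$-primary sheaves on the affine curve $U_{\bar k}$. This is standard but worth citing carefully, and it relies precisely on the standing hypothesis $p \neq \mathrm{char}(F)$, which is exactly what ensures the coefficient sheaves have order invertible on $U_{\bar k}$ -- the crucial input to Artin vanishing. Once this reduction is carried out, no further input beyond the exact sequence above is required, which is presumably why the authors view this elementary cohomological observation as the key to their whole approach.
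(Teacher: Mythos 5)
Your upper bound is essentially a self-contained reproof of the input the paper simply cites: the reduction over the pro-prime-to-$p$ quotient $\Gal(\bar k/k^{\cyc})$ is the same reduction the paper performs (it invokes \cite[Proposition 3.3.5(i)]{nsw}, which for the closed subgroup $G_S(\bar kF)$ of prime-to-$p$ index gives \emph{equality} of $\mathrm{cd}_p$'s, and then quotes \cite[Theorem 10.1.2(i)(c)]{nsw} for $\mathrm{cd}_p(G_S(\bar kF))=1$), and deducing $\mathrm{cd}_p(G_S(\bar kF))\le 1$ from Artin vanishing on the affine curve $U_{\bar k}$ is viable. But the point you flag as ``cosmetic'' is not the real issue, and the real issue is omitted: Artin vanishing bounds the \'etale cohomology of $U_{\bar k}$, whereas you need the \emph{group} cohomology of $\pi_1^{\et}(U_{\bar k})\simeq G_S(\bar kF)$. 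For a finite $p$-primary module $M$ the comparison maps $H^i(\pi_1^{\et}(U_{\bar k}),M)\to H^i_{\et}(U_{\bar k},M)$ are isomorphisms only for $i\le 1$; to kill $H^2(\pi_1^{\et}(U_{\bar k}),M)$ you must know the degree-$2$ map is injective, which follows from the five-term sequence for the pro-universal cover together with the fact that $\varinjlim_{U'} H^1_{\et}(U',M)=0$ over finite \'etale covers $U'$ (every torsor under a finite group is trivialized by a finite \'etale cover). This is standard, but it is the actual content of the step; ``locally constant versus constant'' is a red herring, since one reduces to constant coefficients on a finite cover anyway.

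The genuine gap is in your lower bound. A function with ``a pole in $S$'' does not in general give a class unramified outside $S$ (you need its divisor to be $\equiv 0 \bmod p$ away from $S$), and even the corrected construction can produce nothing: take $F=\BF_q(T)$ and $S=\{\infty\}$ with $\deg\infty=1$. Then $\infty$ stays a single place in $K:=F^{\cyc}(\mu_p)$; any $f\in K^\times$ with divisor $\equiv 0 \bmod p$ outside $S$ has, by the degree count and $\mathrm{Pic}^0(\BP^1)=0$, the form $c\cdot g^p$ with $c$ a constant, and every constant of $K$ \emph{is} a $p$-th power in $K$ because the $p$-part of the order of the multiplicative group of the constant field grows without bound along the $\BZ_p$-tower. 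Hence $H^1(G_S(K),\mu_p)=0$, and likewise $\Hom(G_S(F^{\cyc}),\BZ/p)=0$ (a $p$-cyclic extension unramified outside $\infty$ would be either a constant $p$-extension of $k^{\cyc}$, which does not exist, or a tame geometric $p$-cover of $\BP^1$ ramified at one point, which Riemann--Hurwitz forbids). So in this case there is no nontrivial degree-one class at the bottom level at all; the nontrivial $p$-cohomology only appears on suitable open subgroups (covers of positive genus obtained via wild ramification at $S$). The inequality $\mathrm{cd}_p(G_S(F^{\cyc}))\ge 1$ is of course true, but you should obtain it either as the paper does (the cited NSW equality plus prime-to-$p$ index), or by noting $\mathrm{cd}_p(G_S(F^{\cyc}))\ge \mathrm{cd}_p(G_S(\bar kF))$ for the closed subgroup and arguing that the $p$-Sylow subgroup of $\pi_1^{\et}(U_{\bar k})$ is nontrivial --- not by the Kummer class you propose.
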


\begin{proof} Since the set $S$ is nonempty and $\mathrm{char}(F)\neq p$, \cite[Theorem 10.1.2 (i)(c)]{nsw} implies that
$$\mathrm{cd}_p(G_S(\bar{k}F))=1.$$
Note that $G_S(\bar{k}F)$ is a closed subgroup of $G_S(F^{\mathrm{cyc}})$, and that its quotient $\Gal(\bar{k}F/F^{\mathrm{cyc}})$ is a profinite group in which every open subgroup has an index prime to $p$. Hence, by applying \cite[Proposition 3.3.5 (i)]{nsw}, we conclude that $\mathrm{cd}_p(G_S(F^{\mathrm{cyc}}))=\mathrm{cd}_p(G_S(\bar{k}F))$, and the lemma follows.
\end{proof}

\begin{lem}\label{corankglo} The cohomology group
$H^1(G_S(F_\infty), A_{p^\infty})$ is cofinitely generated over $\Lambda(H)$. In particular, the Pontryagin dual $H^1(G_S(F_\infty), A_{p^\infty})^\vee$ is a torsion $\Lambda(G)$-module.
\end{lem}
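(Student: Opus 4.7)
By Lazard's theorem we may fix an open pro-$p$ subgroup $H'\leq H$; since $\Lambda(H)$ is finitely generated over $\Lambda(H')$, it suffices to prove cofinite generation of $M := H^1(G_S(F_\infty),A_{p^\infty})$ over $\Lambda(H')$. Set $L := F_\infty^{H'}$, a finite extension of $F^{\mathrm{cyc}}$ contained in $F_S$. Both $G_S(L)$ and $G_S(F_\infty)$ are closed subgroups of $G_S(F^{\mathrm{cyc}})$, so by Lemma \ref{cdpcyc} and monotonicity of $p$-cohomological dimension, $\mathrm{cd}_p(G_S(L))\leq 1$ and $\mathrm{cd}_p(G_S(F_\infty))\leq 1$. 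The long exact sequence attached to $0\to A_p\to A_{p^\infty}\xrightarrow{p}A_{p^\infty}\to 0$ over $G_S(F_\infty)$ then forces $M$ to be $p$-divisible, so $M^\vee$ is $\BZ_p$-torsion-free. By topological Nakayama over the local ring $\Lambda(H')$, cofinite generation of $M$ reduces to showing $(M[p])^{H'}$ is finite.

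The same short exact sequence also yields
\[ 0\to A_{p^\infty}(F_\infty)/p\to H^1(G_S(F_\infty),A_p)\to M[p]\to 0, \]
and taking $H'$-invariants sandwiches $(M[p])^{H'}$ between a quotient of $H^1(G_S(F_\infty),A_p)^{H'}$ and the finite $\BF_p$-space $H^1(H',A_{p^\infty}(F_\infty)/p)$. In parallel, the Hochschild--Serre spectral sequence for $G_S(F_\infty)\trianglelefteq G_S(L)\twoheadrightarrow H'$, combined with the $\mathrm{cd}_p$-bounds, yields
\[ 0\to H^1(H',A_p(F_\infty))\to H^1(G_S(L),A_p)\to H^1(G_S(F_\infty),A_p)^{H'}\to H^2(H',A_p(F_\infty))\to 0, \]
whose outer cohomology groups are finite since $H'$ is a compact $p$-adic Lie group with finite coefficients. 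Thus the whole question is reduced to proving that $H^1(G_S(L),A_p)$ is finite.

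The hard part is exactly this last finiteness, which rests on a geometric input. My plan is to compare $L$ with the full constant field extension $L\cdot\bar k F$, where $\bar k$ is the algebraic closure of the constant field of $F$. The Galois group $\Gal(L\cdot\bar k F/L)$ is a quotient of $\Gal(\bar k/k^{(p)})\cong\prod_{\ell\neq p}\BZ_\ell$, hence has prime-to-$p$ profinite order and trivial $p$-primary cohomology; inflation-restriction therefore embeds $H^1(G_S(L),A_p)$ into $H^1(G_S(L\cdot\bar k F),A_p)$. After a further finite Galois extension trivialising the action on $A_p$, the latter $H^1$ computes as a $\Hom$ from the maximal pro-$p$ abelianised quotient of an $S$-ramified Galois group of a function field over $\bar k$; by the classical topological specialisation of the \'etale fundamental group for smooth curves over an algebraically closed field of characteristic $\neq p$, this pro-$p$ abelianisation is a finitely generated $\BZ_p$-module, forcing $H^1(G_S(L),A_p)$ to be finite.

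Assembling the pieces shows $(M[p])^{H'}$ is finite, so $M^\vee$ is finitely generated over $\Lambda(H')$ and hence over $\Lambda(H)$. For the \emph{in particular} clause: since $G/H\cong\Gamma$ has positive dimension, any $\Lambda(G)$-module that is finitely generated as a $\Lambda(H)$-module is automatically $\Lambda(G)$-torsion, a standard feature of non-commutative Iwasawa theory going back to \cite{c-non}.
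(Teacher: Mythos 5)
Your proposal is correct, but the decisive step is handled by a genuinely different argument from the paper's. The formal reductions are parallel: you use topological Nakayama over the local ring $\Lambda(H')$ for an open pro-$p$ subgroup $H'\le H$, the Kummer sequence, and Hochschild--Serre together with the bound $\mathrm{cd}_p\le 1$ from Lemma \ref{cdpcyc}, reducing everything to the finiteness of $H^1(G_S(L),A_p)$ for the finite extension $L=F_\infty^{H'}$ of $F^{\mathrm{cyc}}$; the paper performs essentially the same reduction (after assuming $G$ and $H$ pro-$p$), arriving at the finiteness of $H^1(G_S(F^{\mathrm{cyc}}),A_p)$. Where you diverge is in proving this finiteness: the paper descends to the base field, expressing $\corank_{\BF_p[[\Gamma]]}H^1(G_S(F^{\mathrm{cyc}}),A_p)$ as an alternating sum which, via the $\Gamma$-spectral sequence, becomes the global Euler--Poincar\'e characteristic $\sum_i(-1)^i\dim_{\BF_p}H^i(G_S(F),A_p)=0$ of \cite[(8.7.4)]{nsw}; you instead ascend to the constant field extension $L\bar{k}$, kill the prime-to-$p$ Galois group of that extension by inflation--restriction, and invoke the specialisation theorem for the \'etale fundamental group of an open curve over $\bar{k}$ (characteristic $\neq p$) to see that its maximal pro-$p$ quotient is topologically finitely generated, whence $H^1(G_S(L\bar{k}),A_p)$, and so $H^1(G_S(L),A_p)$, is finite. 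Both routes are valid: the paper's stays entirely within Galois cohomology of global fields and needs only the Euler characteristic formula, while yours imports geometric input (SGA~1) but makes the geometric source of the finiteness transparent and works uniformly for any finite extension of $F^{\mathrm{cyc}}$. One small slip: $\Gal(L\bar{k}F/L)$ is not a quotient of $\Gal(\bar{k}/k^{(p)})\cong\prod_{\ell\ne p}\BZ_\ell$ but is isomorphic to the open subgroup $\Gal(\bar{k}/k_L)$, where $k_L$ is the constant field of $L$; since open subgroups of $\prod_{\ell\ne p}\BZ_\ell$ still have pro-order prime to $p$, the vanishing of their $p$-primary cohomology and the injectivity of your restriction map are unaffected.
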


\begin{proof}
Without loss of generality, we may assume that both $G$ and consequently $H$, are pro-$p$ and have no $p$-torsion. By Nakayama's lemma, it suffices to show that $H^1(G_S(F_\infty), A_{p^\infty})^H$ is cofinitely generated over $\BZ_p$.

Since $H^2(G_S(F^{\mathrm{cyc}}),A_{p^\infty})=H^2(G_S(F_\infty), A_{p^\infty})=0$ by Lemma \ref{cdpcyc}, the Hochschild--Serre spectral sequence
$$H^i\left(H,H^j(G_S(F_\infty),A_{p^\infty})\right)\Rightarrow  H^{i+j}(G_S(F^{\mathrm{cyc}}), A_{p^\infty})$$
degenerates to yield the following exact sequence
$$0\to H^1(H,A_{p^\infty}(F_\infty))\to H^1(G_S(F^{\mathrm{cyc}}),A_{p^\infty})\to H^1(G_S(F_\infty),A_{p^\infty})^H\to H^2(H,A_{p^\infty}(F_\infty)) \to 0.
$$
Since both $H^1(H,A_{p^\infty}(F_\infty))$ and $H^2(H,A_{p^\infty}(F_\infty))$ are cofinitely generated over $\BZ_p$ (see the proof of \cite[Theorem 1.1]{howson}), it remains to show that $H^1(G_S(F^{\mathrm{cyc}}),A_{p^\infty})$ is cofinitely generated over $\BZ_p$. By Nakayama's lemma, this is equivalent to showing that $H^1(G_S(F^{\mathrm{cyc}}),A_{p^\infty})[p]$ is finite. From the long exact sequence of Galois cohomology associated with the short exact sequence \mbox{$0\to A_p\to A_{p^\infty}\xrightarrow{p} A_{p^\infty}\to 0$}, we obtain
a surjection
\[ H^1(G_S(F_\cyc), A_{p}) \twoheadrightarrow H^1(G_S(F_\cyc), A_{p^\infty})[p].\]
Hence, it suffices to show that $H^1(G_S(F_\cyc), A_{p})$ is finite. This is equivalent to proving that
$$\corank_{\BF_p[[\Gamma]]}\left(H^1(G_S(F_\cyc), A_{p})\right)=0.$$
Since $\corank_{\BF_p[[\Gamma]]}\left(H^0(G_S(F^\cyc),A_p)\right)=0$ and $H^2(G_S(F^{\cyc}),A_p)=0$, we have
\begin{equation}\label{altsum}\corank_{\BF_p[[\Gamma]]}\left(H^1(G_S(F^\cyc),A_p)\right)
=\sum_{i=0}^2(-1)^{i+1}\corank_{\BF_p[[\Gamma]]}\left(H^i(G_S(F^\cyc),A_p)\right).
\end{equation}
Using the spectral sequence
\[ H^i\left(\Gamma, H^j(G_S(F^\cyc), A_{p})\right) \Rightarrow H^{i+j}(G_S(F),A_p), \]
we conclude that the right-hand side of (\ref{altsum}) equals
$$\sum_{i=0}^2(-1)^{i+1}\corank_{\BF_p}(H^i(G_S(F), A_p)),$$
which vanishes by a global Euler--Poincar\'{e} characteristic calculation (cf. \cite[(8.7.4)]{nsw}). This concludes the proof of the lemma.
\end{proof}

We can now prove the main result of this section.

\begin{thm}\label{MHG} Let $A$ be an abelian variety defined over a global function field $F$ of characteristic prime to $p$. Let $F_\infty/F$ be an admissible $p$-adic Lie extension. Then the Selmer group $\Sel(A/F_\infty)$ is a cofinitely generated $\Lambda(H)$-module. In particular, the $\mathfrak{M}_H(G)$-conjecture holds for $X(A/F_\infty)$.
Furthermore, $\Sel(A/F^\cyc)$ is a cofinitely generated $\BZ_p$-module.
\end{thm}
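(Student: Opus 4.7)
The plan is to derive Theorem~\ref{MHG} as an essentially formal consequence of Proposition~\ref{2-1-1} combined with the cofiniteness result of Lemma~\ref{corankglo}. The whole substance of the theorem has already been absorbed into Lemma~\ref{cdpcyc} and its consequence Lemma~\ref{corankglo}; what remains is bookkeeping via Pontryagin duality.

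First I would apply Proposition~\ref{2-1-1} with $K=F_\infty$ to obtain the inclusion $\Sel(A/F_\infty)\hookrightarrow H^1(G_S(F_\infty),A_{p^\infty})$. Dualising turns this into a surjection of $\Lambda(H)$-modules
\[ H^1(G_S(F_\infty),A_{p^\infty})^\vee\twoheadrightarrow X(A/F_\infty). \]
By Lemma~\ref{corankglo} the source is finitely generated over $\Lambda(H)$, and since $H$ is a compact $p$-adic Lie group, the Iwasawa algebra $\Lambda(H)$ is Noetherian; hence $X(A/F_\infty)$ is finitely generated over $\Lambda(H)$ as well, which is the first assertion. The $\fM_H(G)$-conjecture is then immediate: any $\Lambda(H)$-finitely generated module is a fortiori finitely generated over the larger ring $\Lambda(G)$, and the quotient $X(A/F_\infty)/X(A/F_\infty)(p)$ is itself a quotient of a $\Lambda(H)$-finitely generated module, hence $\Lambda(H)$-finitely generated, which is precisely the defining condition for membership in $\fM_H(G)$.

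For the final statement I would run exactly the same argument with $F_\infty$ replaced by $F^\cyc$ itself, which trivially satisfies the admissibility axioms of Section~\ref{section3}. In that case $H=\{1\}$ and $\Lambda(H)=\BZ_p$, so the conclusion specialises to $\Sel(A/F^\cyc)$ being cofinitely generated over $\BZ_p$, equivalently to the vanishing of the $\mu$-invariant of $X(A/F^\cyc)$. I do not anticipate any serious obstacle: the cohomological input, which is the genuinely delicate point, has been handled upstream in Lemmas~\ref{cdpcyc} and~\ref{corankglo}, and the remaining passage from the ambient Galois cohomology to the Selmer submodule is a routine exercise in Pontryagin duality.
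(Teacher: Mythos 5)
Your proposal is correct and follows exactly the paper's argument: the paper likewise deduces the theorem immediately from the inclusion $\Sel(A/F_\infty)\subseteq H^1(G_S(F_\infty),A_{p^\infty})$ of Proposition \ref{2-1-1} together with Lemma \ref{corankglo}, using that $\Lambda(H)$ is Noetherian, and handles the cyclotomic case by the same reasoning with $H=\{1\}$. Your added bookkeeping (dualising, the quotient $X/X(p)$, and the specialisation to $\Lambda(H)=\BZ_p$) is just an expanded version of the paper's one-line proof.
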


\begin{proof} Since $\Sel(A/F_\infty)$ is contained in $H^1(G_S(F_\infty), A_{p^\infty})$, the result follows immediately from Lemma \ref{corankglo}.
\end{proof}

We remark that the above theorem relies crucially on the assumption that ${\rm char}(F)\neq p$; otherwise, the key ingredient---the global Euler--Poincar\'{e} characteristic---fails, due to the fact that $p = 0$ in any ring of characteristic $p$. This also explains the $\mu = 0$ condition imposed in \cite{ochitri}.
We conclude with the following corollary concerning the generalised $\mu$-invariant, as defined in \cite{howson, ven02, BV} for finitely generated $\Lambda(G)$-modules.

\begin{cor}\label{genmu} The generalised $\mu$-invariant of  $X(A/F_\infty)$ is zero.
\end{cor}

\begin{proof}We have shown that $\Sel(A/F_\infty)$ is a cofinitely generated $\Lambda(H)$-module. By \cite[Proposition 2.3]{c-non}, this is equivalent to $X(A/F_\infty)$ being $\Sigma$-torsion, where $\Sigma$ is the Ore set introduced in Section \ref{section7}. The result then follows from, for example, the proof of \cite[Proposition 3.3 (i)]{BV} (also see \cite[Lemma 2.7]{howson}).
\end{proof}

\section{Surjectivity of localisation map}\label{section4}

In this section, we establish the surjectivity of the localisation map $\lambda_S(A/F_\infty)$ defining the Selmer group $\Sel(A/F_\infty)$. In particular, we deduce the surjectivity of the localisation map $\lambda_S(A/F^\mathrm{cyc})$.
Before giving the proof, we introduce the \textit{compact Selmer group} $\fS(A/F_\infty)$. Let $T_pA=\varprojlim_n A_{p^n}$ denote the $p$-adic Tate module of $A$. For every finite extension $L$ of $F$ contained in $F_\infty$, the compact Selmer group $\fS(A/L)$ is defined as
\[ \mathrm{ker}\Big( H^1(G_S(L), T_p A) \to \bigoplus_{w\in S} H^1(L_w, T_p A)\Big),\]
where the sum runs over all primes $w$ of $L$ lying above the set $S$. We then define the compact Selmer group over $F_\infty$ as the inverse limit
\[\fS(A/F_\infty)=\varprojlim_{L}\fS(A/L),\]
where the limit is taken over all finite extensions $L$ of $F$ contained in $F_\infty$.
We also define the continuous Iwasawa cohomology of
$T_pA$ by
$$H^i_{\mathrm{Iw}}(F_\infty,T_pA)=\varprojlim_{L} H^{i}(G_S(L),T_p A),$$
where the limit is taken over all finite Galois extensions with transition maps given by the corestriction maps on cohomology.
In particular, $\fS(A/F_\infty)$ is a submodule of $H^1_{\mathrm{Iw}}(F_\infty,T_pA)$.

We now record the following result, which will be a key ingredient in our eventual proof of surjectivity.

\begin{lem}\label{torfree} The module $H^1_{\mathrm{Iw}}(F_\infty, T_p A)$ is a torsion $\Lambda(G)$-module. Moreover, if $\dim (G)\geq 2$ or if $A_{p^\infty}(F_\infty)$ is finite, then $H^1_{\mathrm{Iw}}(F_\infty, T_p A)=0$.
\end{lem}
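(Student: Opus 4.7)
The plan is to exploit the cohomological-dimension bound $\mathrm{cd}_p(G_S(F_\infty))\leq 1$ (a consequence of Lemma \ref{cdpcyc} and the inclusion $F^{\mathrm{cyc}}\subseteq F_\infty$) via the Jannsen spectral sequence, and then to invoke Auslander regularity of $\Lambda(G)$ for the sharper vanishing. The starting point is the spectral sequence of Jannsen (in the non-commutative form due to Nekov\'{a}\v{r} flagged in the section introduction):
$$E_2^{i,j} \;=\; \mathrm{Ext}^i_{\Lambda(G)}\bigl(H^j(G_S(F_\infty), A_{p^\infty})^\vee,\; \Lambda(G)\bigr) \;\Longrightarrow\; H^{i+j}_{\mathrm{Iw}}(F_\infty, T_pA).$$
Since $H^j(G_S(F_\infty), A_{p^\infty})=0$ for all $j\geq 2$, only the columns $j=0,1$ survive, and a routine inspection of the differentials produces the short exact sequence
$$0 \to \mathrm{Ext}^1_{\Lambda(G)}\bigl(A_{p^\infty}(F_\infty)^\vee, \Lambda(G)\bigr) \to H^1_{\mathrm{Iw}}(F_\infty, T_pA) \to \ker(d_2) \to 0,$$
with $\ker(d_2)\subseteq \mathrm{Hom}_{\Lambda(G)}\bigl(H^1(G_S(F_\infty),A_{p^\infty})^\vee, \Lambda(G)\bigr)$.

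I would next kill the $\mathrm{Hom}$ term by appealing to Lemma \ref{corankglo}, which identifies $H^1(G_S(F_\infty),A_{p^\infty})^\vee$ as a torsion $\Lambda(G)$-module; its $\mathrm{Hom}$ into $\Lambda(G)$ therefore vanishes (recall that $\Lambda(G)$ is a Noetherian domain because $G$ has no $p$-torsion). Hence $\ker(d_2)=0$ and
$$H^1_{\mathrm{Iw}}(F_\infty, T_pA) \;\cong\; \mathrm{Ext}^1_{\Lambda(G)}\bigl(A_{p^\infty}(F_\infty)^\vee, \Lambda(G)\bigr).$$
The first assertion of the lemma is now immediate: $A_{p^\infty}(F_\infty)^\vee$ is finitely generated over $\BZ_p$ and hence $\Lambda(G)$-torsion, and the first Ext of a torsion module into a Noetherian domain is again torsion.

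For the strengthened vanishing in the two special cases I would invoke Venjakob's theorem that $\Lambda(G)$ is Auslander regular of global dimension $d+1$ (with $d=\dim G$), together with the accompanying grade/dimension duality $j(M)+\delta(M)=d+1$ for every finitely generated nonzero $\Lambda(G)$-module $M$. Setting $M:=A_{p^\infty}(F_\infty)^\vee$: if $A_{p^\infty}(F_\infty)$ is finite then $M$ is finite, so $\delta(M)=0$ and $j(M)=d+1\geq 2$; if instead $d\geq 2$, then $M$ is finitely generated over $\BZ_p$, hence $\delta(M)\leq 1$ and $j(M)\geq d\geq 2$. In either case $\mathrm{Ext}^1_{\Lambda(G)}(M,\Lambda(G))=0$, giving $H^1_{\mathrm{Iw}}(F_\infty,T_pA)=0$. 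The one delicate point in this plan is the correct formulation of Jannsen's spectral sequence in the non-commutative function-field setting, together with the identification of the Pontryagin dual of $T_pA$ with $A_{p^\infty}$ as a $G_S(F_\infty)$-module; this is precisely what Nekov\'{a}\v{r}'s duality formalism supplies.
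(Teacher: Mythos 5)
Your argument is essentially the paper's own proof: the same Jannsen--Nekov\'{a}\v{r} spectral sequence, the same low-degree exact sequence, the vanishing of the $\Ext^0$-term via Lemma \ref{corankglo}, the torsionness of $\Ext^1_{\Lambda(G)}\left(A_{p^\infty}(F_\infty)^\vee,\Lambda(G)\right)$, and the grade (pseudo-nullity) computation for $A_{p^\infty}(F_\infty)^\vee$ covering both special cases. One small correction: an admissible $G$ may contain prime-to-$p$ torsion, so $\Lambda(G)$ need not be a domain; the vanishing of $\Hom_{\Lambda(G)}(-,\Lambda(G))$ on torsion modules should instead be taken from the characterization of torsion $\Lambda(G)$-modules recalled in Section \ref{section2} (and the torsionness of $\Ext^1$ from \cite{ven02}), after which your proof goes through unchanged.
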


\begin{proof} We consider the following spectral sequence of Jannsen (see \cite[Theorem 1, Theorem 11]{jannsen}; see also \cite[Theorem 4.5.1]{LS} and \cite[Theorem 8.5.6]{nekovar})
$$E_2^{i,j}=\Ext^i_{\Lambda(G)} \left(H^j\left(G_S(F_\infty),  A_{p^\infty})\right)^\vee, \Lambda(G)\right) \Rightarrow H^{i+j}_{\mathrm{Iw}}(F_\infty,T_p A).$$
The low-degree terms of this spectral sequence yield the following exact sequence
\begin{equation}\label{jannseq1}0\to \Ext^1_{\Lambda(G)} \left(A_{p^\infty}(F_\infty)^\vee, \Lambda(G)\right)\to  H^1_{\mathrm{Iw}}(F_\infty,T_p A)\to {\rm Hom}_{\Lambda(G)} \left(H^1\left(G_S(F_\infty),  A_{p^\infty})\right)^\vee, \Lambda(G)\right).
\end{equation}
By Lemma \ref{corankglo}, we have $\rank_{\Lambda(G)}\left(H^1\left(G_S(F_\infty),  A_{p^\infty})\right)^\vee\right)=0$, so the right-most term vanishes.
Since $ \Ext^1_{\Lambda(G)} \left(A_{p^\infty}(F_\infty)^\vee, \Lambda(G)\right)$ is a torsion $\Lambda(G)$-module (cf.\, \cite[Proposition 3.5 (iii) (a)]{ven02}), it follows that $H^1_{\mathrm{Iw}}(F_\infty, T_p A)$ is a torsion $\Lambda(G)$-module. Now by an observation of Jannsen (cf. \cite[Corollary 2.6(b)(c)]{jannsen89}), if $\dim (G)\geq 2$ or if $A_{p^\infty}(F_\infty)$ is finite, then $\Ext^1_{\Lambda(G)} \left(A_{p^\infty}(F_\infty)^\vee, \Lambda(G)\right) =0$. Therefore, both the left-most and right-most terms in \eqref{jannseq1} vanish, and we conclude that $H^1_{\mathrm{Iw}}(F_\infty,T_p A)=0$, as required.
\end{proof}

\begin{thm}\label{thmsurj} Let $A$ be an abelian variety defined over a global function field $F$ of characteristic prime to $p$, and let $F_\infty/F$ be an admissible $p$-adic Lie extension. Then the localisation map $\lambda_S(A/F_\infty)$ is surjective, yielding the following short exact sequence:
\begin{equation}\label{thmsurj-f001}
  0 \longrightarrow \operatorname{Sel}(A/F_\infty) \longrightarrow H^1(G_S(F_\infty), A_{p^\infty})
\overset{\lambda_S(A/F_\infty)}{\xrightarrow{\hspace{3cm}}} \bigoplus_{v \in S} J_v(A/F_\infty) \longrightarrow 0.
\end{equation}

\end{thm}

\begin{proof} Since the conclusion remains unchanged upon replacing the base field $F$ by a larger field contained in $F_\infty$, we may assume that $G$ is pro-$p$. The Poitou--Tate sequence (see \cite[Proof of Theorem 8.7.9]{nsw}) gives the exact sequence
\begin{align*}0\to \Sel(A/F_\infty)\to H^1(G_S(F_\infty),A_{p^\infty})\stackrel{\lambda_S(A/F_\infty)}{\longrightarrow}\bigoplus_{v\in S}J_v(A/F_\infty)\to\fS(A^*/F_\infty)^\vee \to 0,
\end{align*}
where the final ``$0$'' term follows from Lemma \ref{cdpcyc}. Thus, it suffices to show that $\fS(A^*/F_\infty)=0$. By definition, $\fS(A^*/F_\infty)$ is contained in $H^1_{\mathrm{Iw}}(F_\infty,T_pA^*)$. If ${\rm dim}(G)\geq2$, then Lemma \ref{torfree} implies that $H^1_{\mathrm{Iw}}(F_\infty,T_pA^*)=0$, and hence $\fS(A^*/F_\infty)=0$.

It remains to consider the case $F_\infty=F^{\mathrm{cyc}}$. This case is slightly more delicate, as we do not know a priori whether $A_{p^\infty}(F^{\mathrm{cyc}})$ is finite (see Remark \ref{remUlmer} for the elliptic curve case).
 By replacing $F$ with a finite extension if necessary, we may assume that every prime in $S$ remains inert in $F^{\mathrm{cyc}}/F$. For each $v\in S$, we denote by $v$ also the unique prime of $F^{\mathrm{cyc}}$ lying above $v$, and identify the Galois groups $\Gamma=\Gal(F^{\mathrm{cyc}}/F)$ with $\Gal(F^{\mathrm{cyc}}_v/F_v)$. On the global side, the Jannsen spectral sequence (see \cite[Theorem 1, Theorem 11]{jannsen}; see also \cite[Theorem 4.5.1]{LS} and \cite[Theorem 8.5.6]{nekovar})) gives
$$\Ext^i_{\Lambda(\Gamma)} \left(H^j\left(G_S(F^\cyc),  A^*_{p^\infty})\right)^\vee, \Lambda(\Gamma)\right) \Rightarrow H^{i+j}_{\mathrm{Iw}}(F^\cyc,T_p A^*).$$
The low-degree terms yield the following exact sequence
\begin{equation}\label{jannseq}0\to \Ext^1_{\Lambda(\Gamma)} \left(A^*_{p^\infty}(F^\cyc)^\vee, \Lambda(G)\right)\to  H^1_{\mathrm{Iw}}(F_\infty,T_p A^*)\to \Ext^0_{\Lambda(\Gamma)} \left(H^1\left(G_S(F^\cyc),  A^*_{p^\infty})\right)^\vee, \Lambda(\Gamma)\right).
\end{equation}
By Lemma \ref{corankglo}, we have $\rank_{\Lambda(\Gamma)}\left(H^1\left(G_S(F_\infty),  A_{p^\infty})\right)^\vee\right)=0$, so the right-most term vanishes.

For each $v\in S$, there is also a local version of Jannsen's spectral sequence (see \cite[Theorem 11]{jannsen}, \cite[Theorem 4.2.2]{LS} or \cite[Theorem 5.2.6]{nekovar}) for $A^*$:
$$\Ext^i_{\Lambda(\Gamma)} \left(H^j\left(F^{\mathrm{cyc}}_v,  A_{p^\infty}^*)\right)^\vee, \Lambda(\Gamma)\right) \Rightarrow H^{i+j}_{\mathrm{Iw}}(F^{\mathrm{cyc}}_v,T_p A^*).$$
Let us simplify notation by writing $\mathrm{E}^i(-) = \Ext^i_{\Lambda(\Gamma)} \left(-, \Lambda(\Gamma)\right)$.
The low-degree terms give an exact sequence:
$$0\to \mathrm{E}^1\left(A^*_{p^\infty}(F^{\mathrm{cyc}}_v)^\vee\right)\to  H^1_{\mathrm{Iw}}(F^{\mathrm{cyc}}_v,T_p A^*)\to \mathrm{E}^0 \left(H^1(F^{\mathrm{cyc}}_v,  A^*_{p^\infty})^\vee\right).$$
The global and local spectral sequences are functorial   \cite[Theorem 4.5.1]{LS}, which yields the following commutative diagram with exact rows:
\begin{small}
$$
\xymatrix{
 0  \ar[r] & \mathrm{E}^1 \left(A^*_{p^\infty}(F^{\mathrm{cyc}})^\vee\right) \ar[d]_{f} \ar[r]^{} & H^1_{\mathrm{Iw}}(F^{\mathrm{cyc}},T_p A^*)\ar[d]_{g}  \ar[r]  &  0 &  & \\
0  \ar[r]& \bigoplus_{v\in S}\mathrm{E}^1 \left(A^*_{p^\infty}(F^{\mathrm{cyc}}_v)^\vee\right)  \ar[r]^{} & \bigoplus_{v\in S}H^1_{\mathrm{Iw}}(F^{\mathrm{cyc}}_v,T_p A^*)\ar[r] & \bigoplus_{v\in S}\mathrm{E}^0 \left(H^1(F^{\mathrm{cyc}}_v,  A^*_{p^\infty})^\vee\right) \ar[r] & 0,}
$$
\end{small}
where the vertical maps are induced by the natural localisation maps. Note that $\ker g= \fS(A^*/F^{\mathrm{cyc}})$. So it remains to show that $f$ is injective. Since $S$ is non-empty, and $f=\bigoplus_{v\in S} f_v$, it suffices to show that each
$$f_v: \mathrm{E}^1 \left(A^*_{p^\infty}(F^{\mathrm{cyc}})^\vee\right)  \to  \mathrm{E}^1 \left(A^*_{p^\infty}(F^{\mathrm{cyc}}_v)^\vee\right) $$
is injective. But since both $A^*_{p^\infty}(F^{\mathrm{cyc}})^\vee$ and  $A^*_{p^\infty}(F^{\mathrm{cyc}}_v)^\vee$ are finitely generated $\BZ_p$-modules, \cite[Corollary 5.5.7]{nsw} implies that $f_v$ is given by
$$\Hom_{\BZ_p}(A^*_{p^\infty}(F^{\mathrm{cyc}})^\vee, \BZ_p)\to \Hom_{\BZ_p}(A^*_{p^\infty}(F^{\mathrm{cyc}}_v)^\vee, \BZ_p),$$
 which is injective, as it is induced by the inclusion $A^*_{p^\infty}(F^{\mathrm{cyc}})\hookrightarrow A^*_{p^\infty}(F^{\mathrm{cyc}}_v)$. This completes the proof.
\end{proof}

\begin{remark}\label{remUlmer} In the case where $A=E$ is an elliptic curve without complex multiplication, the proof of Theorem \ref{thmsurj} can be simplified. By a theorem of Deuring, this is equivalent to the condition that $E$ is not defined over a finite field; equivalently, $E$ is non-isotrivial. In particular, we do not need to treat the case $F_\infty=F^{\mathrm{cyc}}$ separately, since Lemma \ref{torfree} holds without requiring the assumption $\dim(G)\geq 2$. One way to see this is via a result of Ulmer, which asserts that $E(\bar{k}F)$ is a finitely generated abelian group (see \cite[\S6.3]{ulmer}). Since $F^{\rm cyc}$ is a subfield of $\bar{k}F$, it follows that $E(F^{\rm cyc})$ is a finitely generated abelian group. Consequently,
$E_{p^\infty}(F^{\rm cyc})$ is finite. It implies that $E_{p^\infty}(F^{\rm cyc})$ is pseudo-null as a $\Lambda(G)$-module, and therefore $H^1_{\mathrm{Iw}}(F_\infty, T_p E)$ injects into the $\Ext^0$-term of the corresponding spectral sequence.
\end{remark}

\section{Akashi series}\label{section5}

Let $\mathcal{Q}(\Gamma)$ denote the field of fractions of $\Lambda(\Gamma)$. To each $M\in \fM_H(G)$, we associate a non-zero element $\mathrm{Ak}(M)\in\mathcal{Q}(\Gamma)$, which is uniquely determined up to multiplication by a unit in $\Lambda(\Gamma)$. It can be shown (see \cite[Lemma 3.1]{c-non}) that the homology groups
$$H_i(H,M)\qquad \text{for }i\geq0$$
are finitely generated torsion $\Lambda(\Gamma)$-modules. We fix, from now on, a topological generator of $\Gamma$, and identify $\Lambda(\Gamma)$ with the formal power series ring $\BZ_p[[T]]$ by mapping this topological generator to $1 + T$.  Let $f_{M,i}\in \Lambda(\Gamma)$ denote a characteristic power series of $H_i(H,M)$, and define
\begin{equation}\label{3-f-3}
  \mathrm{Ak}(M)=\prod_{i\geq0}f_{M,i}^{(-1)^i}.
\end{equation}
This product is finite because $H_i(H,M)=0$ for $i\geq \dim (H)$, and it is well defined up to multiplication by a unit in $\Lambda(\Gamma)$, since each $f_{M,i}$ is. We call $\mathrm{Ak}(M)$ the \textit{Akashi series} of $M$. If $Y$ is a discrete $p$-primary $G$-module such that $M=Y^\vee\in \fM_H(G)$, then we also refer to $\mathrm{Ak}(M)$ as the Akashi series of $Y$, and by abuse of notation, write it as $\mathrm{Ak}(Y)$. Via the duality
$$H_i(H,M) = H^i(H,Y)^\vee,$$
we will henceforth work with $H$-cohomology groups in computating the Akashi series, without further mention. This definition generalises the notion of the characteristic power series. Specifically,
if $M$ is a finitely generated torsion $\Lambda(\Gamma)$-module (i.e. when $H=\{1\}$), then the Akashi series of $M$ coincides with the $\Lambda(\Gamma)$-characteristic power series of $M$, and we denote $\mathrm{Ak}(M)$ simply by $f_M$, or $f_Y$.

\medskip

\begin{lem}\label{lemakashi} Given a short exact sequence
  \begin{equation}\label{akmul}
    0\to M_1\to M_2 \to M_3\to 0
  \end{equation}
in the category $\fM_H(G)$, the Akashi series satisfy $\mathrm{Ak}(M_2) = \mathrm{Ak}(M_1)\cdot \mathrm{Ak}(M_3)$.
\end{lem}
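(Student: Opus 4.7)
The plan is to deduce the multiplicativity of Akashi series from the multiplicativity of characteristic power series of torsion $\Lambda(\Gamma)$-modules, via the long exact sequence of $H$-homology associated to \eqref{akmul}.

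First I would write down the long exact sequence
\[\cdots \to H_{i+1}(H,M_3) \to H_i(H,M_1)\to H_i(H,M_2)\to H_i(H,M_3)\to H_{i-1}(H,M_1)\to\cdots\]
This sequence terminates on both ends: on the right by the standard vanishing $H_i(H,-)=0$ for $i<0$, and on the left because $H$ has finite $p$-cohomological dimension (as $G$, and hence $H$, is a $p$-adic Lie group without $p$-torsion), so $H_i(H,M_j)=0$ for all sufficiently large $i$. Since each $M_j$ lies in $\fM_H(G)$, by \cite[Lemma 3.1]{c-non} each homology group $H_i(H,M_j)$ is a finitely generated torsion $\Lambda(\Gamma)$-module, so the characteristic power series $f_{M_j,i}$ are all well defined.

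Next I would split the long exact sequence into short exact sequences in the standard way, introducing $Z_i = \mathrm{im}\bigl(H_i(H,M_3)\to H_{i-1}(H,M_1)\bigr)$, which gives for each $i$ a short exact sequence of finitely generated torsion $\Lambda(\Gamma)$-modules
\[0 \to Z_{i+1} \to H_i(H,M_1) \to H_i(H,M_2) \to H_i(H,M_3) \to Z_i \to 0,\]
which breaks further into two genuine short exact sequences. Applying the multiplicativity of the $\Lambda(\Gamma)$-characteristic power series (valid because $\Lambda(\Gamma)\cong\BZ_p[[T]]$ is a regular local ring) to each of these yields
\[f_{M_2,i} \;=\; f_{M_1,i}\cdot f_{M_3,i}\cdot f_{Z_i}^{-1}\cdot f_{Z_{i+1}}^{-1}\]
up to units in $\Lambda(\Gamma)$, where the inverse is taken in $\mathcal{Q}(\Gamma)$.

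Finally I would take the alternating product $\prod_i (-)^{(-1)^i}$ over all $i\geq 0$. On the right hand side the contributions of the $Z_i$ telescope: the factor $f_{Z_i}^{(-1)^{i+1}}$ coming from index $i$ cancels the factor $f_{Z_i}^{(-1)^i}$ coming from index $i-1$, and the boundary terms vanish since $Z_i=0$ for $i$ sufficiently large (from the termination of the long exact sequence) and for $i\le 0$. What remains is exactly
\[\mathrm{Ak}(M_2) \;=\; \mathrm{Ak}(M_1)\cdot \mathrm{Ak}(M_3)\]
up to a unit in $\Lambda(\Gamma)$, as desired. The only mild obstacle is bookkeeping the telescoping signs and confirming that all intermediate modules $Z_i$ are themselves finitely generated torsion over $\Lambda(\Gamma)$ so that their characteristic power series make sense; this is immediate since they are subquotients of the $H_i(H,M_j)$.
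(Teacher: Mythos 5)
Your argument is correct and is exactly the route the paper takes: the long exact sequence of $H$-homology attached to \eqref{akmul}, together with multiplicativity of $\Lambda(\Gamma)$-characteristic series in (the resulting short) exact sequences of finitely generated torsion $\Lambda(\Gamma)$-modules, with the boundary terms telescoping in the alternating product. Your write-up simply makes explicit the bookkeeping that the paper leaves to the reader.
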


\begin{proof}
This follows by considering the long exact sequence in $H$-homology derived from \eqref{akmul}, and applying the multiplicativity of $\Lambda(\Gamma)$-characteristic series in a short exact sequence.
\end{proof}

\begin{remark} We note that the original definition of $\fM_H(G)$ in \cite{coates-sujatha-schneider} differs from the one used in \cite{c-non} and in this paper. One consequence of this discrepancy is that, in general,  $\mathrm{Ak}(M) \neq \mathrm{Ak}(M/M(p))$, as shown in \cite[Lemma 4.1]{coates-sujatha-schneider}. This is because $M(p)$ need not be finitely generated over $\Lambda(H)$, and hence its homology group may not be finitely generated over $\BZ_p$. Therefore, even though its homology group is annihilated by some power of $p$, we cannot conclude that they are finite. In general, $\mathrm{Ak}(M(p))$ may be non-trivial. In the case where $G$ is a pro-$p$ group without $p$-torsion, we have $\mathrm{Ak}(M(p))=p^{\mu(M)}$ (see \cite[1.6]{AW}), where $\mu(M)$ denotes the generalised $\mu$-invariant of $M$, as defined in \cite{howson, ven02, BV}.
\end{remark}

By virtue of Theorem \ref{thmsurj}, we have a short exact sequence
$$  0\to \Sel(A/F_\infty)\to H^1(G_S(F_\infty),A_{p^\infty})\to \bigoplus_{v\in S} J_v(A/F_\infty)\to 0.$$
Since $\Sel(A/F_\infty)$ and $H^1(G_S(F_\infty),A_{p^\infty})$ are cofinitely generated $\Lambda(H)$-modules by Lemma \ref{corankglo} and Theorem \ref{MHG}, so is $\bigoplus_{v\in S} J_v(A/F_\infty)$. Hence, we may apply Lemma \ref{lemakashi} to deduce
\begin{equation}\label{akses}
\mathrm{Ak}\Big( H^1(G_S(F_\infty),A_{p^\infty})\Big) = \mathrm{Ak}\Big(\Sel(A/F_\infty)\Big) \mathrm{Ak}\left(\bigoplus_{v\in S} J_v(A/F_\infty)\right).
\end{equation}
Thus, determining $\mathrm{Ak}\Big(\Sel(A/F_\infty)\Big)$  reduces to computing
\[\mathrm{Ak}\Big( H^1(G_S(F_\infty),A_{p^\infty})\Big)\quad \textrm{and}\quad \mathrm{Ak}\left(\bigoplus_{v\in S} J_v(A/F_\infty)\right),\]
which are the subjects of the next two lemmas.

\begin{lem} \label{Akglobal}
 $\mathrm{Ak}\Big( H^1(G_S(F_\infty),A_{p^\infty})\Big)= f_{H^1(G_S(F^\cyc), A_{p^\infty})} \cdot \prod_{i\geq 1}\left(f_{H^i\left(H, A_{p^\infty}(F_\infty)\right)}\right)^{(-1)^i}$.
\end{lem}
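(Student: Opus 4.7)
The plan is to run the Hochschild--Serre spectral sequence
\[
E_2^{i,j} = H^i\bigl(H, H^j(G_S(F_\infty), A_{p^\infty})\bigr) \Longrightarrow H^{i+j}(G_S(F^\cyc), A_{p^\infty})
\]
and exploit its extreme degeneration: by Lemma \ref{cdpcyc} we have $\mathrm{cd}_p(G_S(F^\cyc))=1$, and since $G_S(F_\infty)$ is a closed subgroup of $G_S(F^\cyc)$, its $p$-cohomological dimension is also at most $1$. Thus $E_2^{i,j}=0$ for $j\geq 2$, and the abutment vanishes for $i+j\geq 2$.

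From the shape of the spectral sequence, the only possibly non-trivial differential is $d_2\colon E_2^{i,1}\to E_2^{i+2,0}$. Comparing $E_\infty$ with the vanishing of $H^n(G_S(F^\cyc), A_{p^\infty})$ for $n\geq 2$, I will deduce two pieces of information: the five-term exact sequence \eqref{beta} of Lemma \ref{corankglo}, namely
\[
0 \to H^1(H, A_{p^\infty}(F_\infty)) \to H^1(G_S(F^\cyc), A_{p^\infty}) \to H^0\bigl(H, H^1(G_S(F_\infty), A_{p^\infty})\bigr) \to H^2(H, A_{p^\infty}(F_\infty)) \to 0,
\]
and, for every $i\geq 1$, an isomorphism of the transgression $d_2$
\[
H^i\bigl(H, H^1(G_S(F_\infty), A_{p^\infty})\bigr) \;\cong\; H^{i+2}(H, A_{p^\infty}(F_\infty)),
\]
obtained by noting that $E_\infty^{n-1,1}=0$ forces injectivity of $d_2$ on $E_2^{n-1,1}$ for $n\geq 2$, while $E_\infty^{n,0}=0$ forces surjectivity of $d_2$ onto $E_2^{n,0}$ for $n\geq 2$.

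Once these algebraic relations are in place, the computation is purely formal. Each module appearing is cofinitely generated over $\mathbb{Z}_p$ (the $H^i(H, A_{p^\infty}(F_\infty))$ because $A_{p^\infty}(F_\infty)^\vee$ is finitely generated over $\mathbb{Z}_p$ and $H$ has finite cohomological dimension, and $H^1(G_S(F^\cyc), A_{p^\infty})$ by the argument in Lemma \ref{corankglo}), hence finitely generated torsion $\Lambda(\Gamma)$-modules after taking Pontryagin duals. Dualising the five-term sequence and applying multiplicativity of characteristic power series in four-term exact sequences yields
\[
f_{H^0(H,H^1(G_S(F_\infty),A_{p^\infty}))} \cdot f_{H^1(H,A_{p^\infty}(F_\infty))} = f_{H^1(G_S(F^\cyc),A_{p^\infty})} \cdot f_{H^2(H,A_{p^\infty}(F_\infty))}.
\]
Substituting this expression for the $i=0$ factor of $\mathrm{Ak}\bigl(H^1(G_S(F_\infty),A_{p^\infty})\bigr)=\prod_{i\geq 0} f_{H^i(H,H^1(G_S(F_\infty),A_{p^\infty}))}^{(-1)^i}$, replacing each factor for $i\geq 1$ via the $d_2$-isomorphism, and re-indexing $j=i+2$ to merge all contributions coming from the cohomology of $A_{p^\infty}(F_\infty)$ into a single alternating product $\prod_{i\geq 1} f_{H^i(H,A_{p^\infty}(F_\infty))}^{(-1)^i}$, produces the stated identity.

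The only delicate point will be bookkeeping the signs after re-indexing and confirming that the $i=1$ and $i=2$ contributions coming from the five-term sequence correctly fit into the alternating product over $i\geq 1$; the underlying cohomological input is already in hand.
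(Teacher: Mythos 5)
Your proposal is correct and follows essentially the same route as the paper: both use the Hochschild--Serre spectral sequence for $F_\infty/F^{\mathrm{cyc}}$, the vanishing in degree $\geq 2$ coming from Lemma \ref{cdpcyc}, the resulting four-term exact sequence \eqref{beta} and the isomorphisms \eqref{isom+2}, and then the multiplicativity of characteristic power series together with the re-indexing you describe. Your degeneration analysis and the final bookkeeping (which the paper leaves as a ``direct calculation'') are accurate, so nothing is missing.
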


\begin{proof}
Since $H^2(G_S(F^{\mathrm{cyc}}),A_{p^\infty})=H^2(G_S(F_\infty), A_{p^\infty})=0$ by Lemma \ref{cdpcyc}, the spectral sequence
$$H^i\left(H,H^j(G_S(F_\infty),A_{p^\infty})\right)\Rightarrow  H^{i+j}(G_S(F_\infty), A_{p^\infty})$$
degenerates and yields the following exact sequence
\begin{equation}\label{beta}0\to H^1(H,A_{p^\infty}(F_\infty))\to H^1(G_S(F^{\mathrm{cyc}}),A_{p^\infty})\to H^1(G_S(F_\infty),A_{p^\infty})^H\to H^2(H,A_{p^\infty}(F_\infty)) \to 0
\end{equation}
together with isomorphisms
\begin{equation}\label{isom+2}H^i\left(H,H^1(G_S(F_\infty),A_{p^\infty})\right) \simeq H^{i+2}(H,A_{p^\infty}(F_\infty))
\end{equation}
for $i\geq 1$. The lemma now follows from a direct calculation using (\ref{beta}) and (\ref{isom+2}).
\end{proof}

Since $\mathrm{char}(F_v)\neq p$ for each $v\in S$, it follows from a theorem of Iwasawa \cite[Theorem 7.5.3]{nsw} that the decomposition subgroup $G_w$ of any prime $w$ of $F_\infty$ lying above $v$ has dimension at most $2$. Let $S'$ denote the subset of $S$ consisting of primes whose inertia group in $G$ is infinite. This is equivalent to requiring that the decomposition subgroup $G_w$ has dimension exactly $2$. Indeed, since the residue characteristic is prime to $p$ and $G$ is a $p$-adic Lie group, the wild inertia subgroup is finite. Moreover,  the maximal unramified $p$-extension of $F_v$ is $F_v^{\mathrm{cyc}}$, and the maximal tamely ramified extension of $F_v$ is a topologically cyclic extension over the maximal unramified extension.

\begin{lem} \label{AKlocal}
$\mathrm{Ak}\left(\bigoplus_{v\in S} J_v(A/F_\infty)\right) = \prod_{v\in S\setminus S'} f_{J_v(A/F^{\mathrm{cyc}})}$
\end{lem}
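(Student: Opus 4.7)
The plan is to compute $\mathrm{Ak}(J_v(A/F_\infty))$ at each place $v\in S$ separately and conclude by multiplicativity (Lemma~\ref{lemakashi}), showing that the local Akashi series equals $f_{J_v(A/F^{\mathrm{cyc}})}$ when $v\in S\setminus S'$ and equals $1$ when $v\in S'$. Fix a prime $w$ of $F_\infty$ above $v$ and set $u := w|_{F^{\mathrm{cyc}}}$ and $H_w := H\cap G_w$. Using normality of $H$ in $G$ to identify $H\backslash G/G_w = \Gamma/\Gamma_u$, Shapiro's lemma yields, as $\Lambda(\Gamma)$-modules,
\[ H^i(H, J_v(A/F_\infty))\simeq\bigoplus_{u\mid v\text{ in }F^{\mathrm{cyc}}} H^i(H_w, H^1(F_{\infty,w}, A_{p^\infty})). \]
The Hochschild--Serre spectral sequence for $F_{\infty,w}/F^{\mathrm{cyc}}_u$, combined with the bounds $\mathrm{cd}_p(F^{\mathrm{cyc}}_u)\le 1$, $\mathrm{cd}_p(F_{\infty,w})\le 1$ (argument of Lemma~\ref{cdpcyc}) and $\mathrm{cd}_p(H_w)\le\dim H_w\le 1$, then collapses to the short exact sequence
\[ 0 \to H^1(H_w, A_{p^\infty}(F_{\infty,w})) \to H^1(F^{\mathrm{cyc}}_u, A_{p^\infty}) \to H^1(F_{\infty,w}, A_{p^\infty})^{H_w} \to 0 \qquad(\star) \]
together with the vanishing $H^i(H_w, H^1(F_{\infty,w}, A_{p^\infty})) = 0$ for $i\ge 1$. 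Hence $\mathrm{Ak}(J_v(A/F_\infty)) = f_{H^0(H, J_v(A/F_\infty))}$ in both cases.

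For $v\in S\setminus S'$, one observes that $\Gamma_u$ is open in $\Gamma\simeq\BZ_p$ (since $\mathrm{Frob}_v$ corresponds to the nonzero integer $[k_v:k]$), so $\dim H_w = \dim G_w - \dim\Gamma_u \le 0$ and $H_w$ is finite. Since $G$ has no $p$-torsion, $|H_w|$ is coprime to $p$, so $H^i(H_w, -)$ vanishes on $p$-primary modules for $i\ge 1$; the kernel in $(\star)$ is trivial, and summing over $u\mid v$ gives $H^0(H, J_v(A/F_\infty))\simeq J_v(A/F^{\mathrm{cyc}})$, from which $\mathrm{Ak}(J_v(A/F_\infty)) = f_{J_v(A/F^{\mathrm{cyc}})}$.

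For $v\in S'$, $\dim H_w = 1$, and the essential step is to show that $H^1(F_{\infty,w}, A_{p^\infty})^{H_w}$ is finite, which will render $H^0(H, J_v(A/F_\infty))$ finite and hence give $\mathrm{Ak}(J_v(A/F_\infty)) = 1$. The plan is to compare $\BZ_p$-coranks in $(\star)$. On the kernel, rank-nullity applied to the $\BZ_p$-linear endomorphism $\gamma - 1$ on the cofinitely generated $\BZ_p$-module $A_{p^\infty}(F_{\infty,w})$, for $\gamma$ a topological generator of an open pro-cyclic subgroup of $H_w$, yields
\[ \corank_{\BZ_p} H^1(H_w, A_{p^\infty}(F_{\infty,w})) = \corank_{\BZ_p} A_{p^\infty}(F^{\mathrm{cyc}}_u). \]
For the middle term, the observation is that $F^{\mathrm{cyc}}_u$ is the maximal unramified pro-$p$ extension of $F_v$, so the pro-$p$ quotient of $\Gal(\bar{F}_v/F^{\mathrm{cyc}}_u)$ is the pro-cyclic tame inertia $\BZ_p(1)$; factoring the cohomology through this pro-$p$ quotient (the prime-to-$p$ kernel contributes trivially on $p$-primary coefficients) and applying the same rank-nullity argument gives
\[ \corank_{\BZ_p} H^1(F^{\mathrm{cyc}}_u, A_{p^\infty}) = \corank_{\BZ_p} A_{p^\infty}(F^{\mathrm{cyc}}_u). \]
Equality of these coranks in $(\star)$ forces the quotient $H^1(F_{\infty,w}, A_{p^\infty})^{H_w}$ to have $\BZ_p$-corank zero, hence to be finite. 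The main obstacle, I expect, lies in justifying the second corank identity with care, since one must track the action of wild and prime-to-$p$ tame inertia on $A_{p^\infty}$, especially at places of bad reduction.
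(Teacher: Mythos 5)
Your reduction via Shapiro's lemma and the five-term exact sequence, and your treatment of the places $v\in S\setminus S'$, are fine and essentially coincide with the paper's argument: there $H_w$ is finite of order prime to $p$ (no $p$-torsion in $G$), so the higher $H$-cohomology of $J_v(A/F_\infty)$ vanishes and $J_v(A/F^{\mathrm{cyc}})\simeq J_v(A/F_\infty)^H$. The genuine gap is in the case $v\in S'$. The paper handles this case by a much simpler and stronger observation: by Iwasawa's theorem on the Galois group of the maximal pro-$p$ extension of a local field of residue characteristic different from $p$ (\cite[Theorem 7.5.3]{nsw}), the condition $\dim G_w=2$ forces $F_{\infty,w}$ to admit no nontrivial $p$-extension, whence $H^1(F_{\infty,w},A_{p^\infty})=0$ and $J_v(A/F_\infty)=0$ identically; no corank comparison is needed at all.

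In your route the burden falls on the identity $\corank_{\BZ_p}H^1(F^{\mathrm{cyc}}_u,A_{p^\infty})=\corank_{\BZ_p}A_{p^\infty}(F^{\mathrm{cyc}}_u)$, and the justification you sketch for it is not correct. The residue field of $F^{\mathrm{cyc}}_u$ is only the $\BZ_p$-extension $k_v^{(p)}$ of $k_v$, and the residual Frobenius acts on the pro-$p$ part $\BZ_p(1)$ of tame inertia through the prime-to-$p$ part of the cyclotomic character, which is nontrivial whenever $q_v\not\equiv 1\pmod p$ (for odd $p$). In that case the maximal pro-$p$ quotient of $\Gal(\bar{F}_v/F^{\mathrm{cyc}}_u)$ is trivial, not $\BZ_p(1)$, and the kernel of the map to it contains $\BZ_p(1)$, so it is not pro-prime-to-$p$ and cohomology of $p$-primary modules does not factor through this quotient; if it did, one would wrongly conclude that $H^1(F^{\mathrm{cyc}}_u,A_{p^\infty})$ is always cotrivial, whereas e.g.\ for nonsplit multiplicative reduction with $q_v\equiv -1\pmod p$ the module $A_{p^\infty}(F^{\mathrm{cyc}}_u)$ already has positive corank. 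The identity itself is true, but the clean proofs go either through local Tate duality together with a careful analysis of the Frobenius-twisted coinvariants of tame inertia, or — most directly — through the very vanishing $H^1(F_{\infty,w},A_{p^\infty})=0$ that your argument is trying to avoid (this is how the paper deduces it in Corollary \ref{Howson rank}). A smaller imprecision in the same part: rank--nullity for $\gamma-1$, with $\gamma$ a topological generator of an \emph{open} procyclic subgroup of $H_w$, computes $\corank A_{p^\infty}(F_{\infty,w})^{\langle\gamma\rangle}$, which can strictly exceed $\corank A_{p^\infty}(F^{\mathrm{cyc}}_u)=\corank A_{p^\infty}(F_{\infty,w})^{H_w}$; the correct statement $\corank H^1(H_w,-)=\corank H^0(H_w,-)$ should instead be obtained from the Euler-characteristic formula over $\Lambda(H_w)$ (Howson), as the paper does.
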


\begin{proof}
For each $v\in S$, we have
\begin{align*}J_v(A/F_\infty)&=\bigoplus_{w\mid v}\mathrm{Ind}_{H_w}^H H^1(F_{\infty, w}, A_{p^\infty}),
\end{align*}
where the sum is over primes $w$ of $F^{\mathrm{cyc}}$ lying above $v$, and we also write $w$ for a fixed prime in $F_\infty$ lying above $w$.
We claim that $J_v(A/F_\infty)=0$ for $v\in S'$. Indeed, let $w$ be a prime of $F_\infty$ above $v$, so that $\dim G_w= 2$ by  assumption. Then Iwasawa's theorem (\cite[Theorem 7.5.3]{nsw}) implies that $F_{\infty, w}$ has no nontrivial $p$-extensions. Hence $ H^1(F_{\infty, w}, A_{p^\infty})=0$ and the claim follows.

Now consider $v\in S\setminus S'$. To compute $\mathrm{Ak}\left(J_v(A/F_\infty)\right)$,
let $w$ be a prime of $F^{\mathrm{cyc}}$ above $v$, and fix a prime of $F_\infty$ above $w$, which we again denote by $w$. Let $H_w$ denote the decomposition subgroup of $H$ at $w$. By Shapiro's lemma, we have
\begin{equation}\label{shapiro}H^i(H, J_v(A/F_\infty))\simeq \bigoplus_{w\mid v}H^i(H_w, H^1(F_{\infty, w},A_{p^\infty})).
\end{equation}
Since $v\nmid p$, the inertia group at $v$ is finite, so $H_w$ is finite. As $G$ has no $p$-torsion, the order of $H_w$ is prime to $p$, and thus $\mathrm{cd}_p(H_w)=0$. Therefore, the right-hand side of \eqref{shapiro} vanishes for all
$i\geq 1$, implying that
\[H^i(H,J_v(A/F_\infty))=0\quad\textrm{for all $i\geq1$.}\]
For the term $J_v(A/F_\infty)^H$, we observe that the restriction map
\[  J_v(A/F^\cyc) \rightarrow J_v(A/F_\infty)^H\]
is an isomorphism, due to the fact that $\mathrm{cd}_p(H_w)=0$. In conclusion,
\[ \mathrm{Ak}\Big(J_v(A/F_\infty)\Big) = \left\{
  \begin{array}{ll}
    f_{J_v(A/F^\cyc)} & \hbox{if $v\in S\setminus S'$,} \\
    0, & \hbox{if $v\in S'$,}
  \end{array}
\right. \]
which implies the lemma.
\end{proof}

\bigskip

 \begin{thm}\label{akashithm}  Let $A$ be an abelian variety defined over a global function field $F$ with $\mathrm{char}(F)\neq p$, and let $F_\infty/F$ be an admissible $p$-adic Lie extension. Then
$$\mathrm{Ak}\left(\Sel(A/F_\infty)\right)=f_{\Sel(A/F^{\mathrm{cyc}})}\prod_{i\geq 1}\left(f_{H^i\left(H, A_{p^\infty}(F_\infty)\right)}\right)^{(-1)^i}\prod_{v\in  S'} f_{J_v(A/F^{\mathrm{cyc}})}.$$
\end{thm}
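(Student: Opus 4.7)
The plan is to assemble the formula directly from the three ingredients proved in this section, namely equation \eqref{akses} and Lemmas \ref{Akglobal} and \ref{AKlocal}, together with the $F^{\mathrm{cyc}}$-version of the fundamental short exact sequence produced by Theorem \ref{thmsurj}. First, I would rearrange \eqref{akses} to isolate the Akashi series of the Selmer group as
\[ \mathrm{Ak}\bigl(\Sel(A/F_\infty)\bigr) = \frac{\mathrm{Ak}\bigl(H^1(G_S(F_\infty),A_{p^\infty})\bigr)}{\mathrm{Ak}\bigl(\bigoplus_{v\in S} J_v(A/F_\infty)\bigr)}, \]
which is legitimate since all three terms lie in $\mathfrak{M}_H(G)$.

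Next, I would substitute the two explicit expressions just obtained: the numerator becomes $f_{H^1(G_S(F^\cyc),A_{p^\infty})} \prod_{i\geq 1} f_{H^i(H,A_{p^\infty}(F_\infty))}^{(-1)^i}$ by Lemma \ref{Akglobal}, and the denominator becomes $\prod_{v\in S\setminus S'} f_{J_v(A/F^\cyc)}$ by Lemma \ref{AKlocal}. It remains to rewrite $f_{H^1(G_S(F^\cyc),A_{p^\infty})}$ in terms of $f_{\Sel(A/F^\cyc)}$ and the local factors. For this I would apply Theorem \ref{thmsurj} to the admissible extension $F^\cyc/F$, which yields the short exact sequence
\[ 0 \to \Sel(A/F^\cyc) \to H^1(G_S(F^\cyc),A_{p^\infty}) \to \bigoplus_{v\in S} J_v(A/F^\cyc) \to 0 \]
of torsion $\Lambda(\Gamma)$-modules. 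Multiplicativity of characteristic power series in short exact sequences then gives
\[ f_{H^1(G_S(F^\cyc),A_{p^\infty})} = f_{\Sel(A/F^\cyc)} \cdot \prod_{v\in S} f_{J_v(A/F^\cyc)}. \]

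Finally, substituting this identity into the numerator, the factor $\prod_{v\in S\setminus S'} f_{J_v(A/F^\cyc)}$ in the denominator cancels the corresponding piece of $\prod_{v\in S} f_{J_v(A/F^\cyc)}$, leaving precisely $\prod_{v\in S'} f_{J_v(A/F^\cyc)}$ and producing the claimed formula. There is essentially no obstacle: the theorem is a formal consequence of the fact that all the modules involved belong to $\mathfrak{M}_H(G)$, together with the surjectivity of the localisation map both for $F_\infty$ and for its intermediate cyclotomic layer $F^\cyc$. The only small point requiring mild care is that the characteristic series identities should be read up to units in $\Lambda(\Gamma)$, which is automatic from the definition of $\mathrm{Ak}$.
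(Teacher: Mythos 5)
Your argument is correct and is essentially the same as the paper's: substitute Lemmas \ref{Akglobal} and \ref{AKlocal} into \eqref{akses}, then use the $F^{\mathrm{cyc}}$-case of Theorem \ref{thmsurj} together with multiplicativity of $\Lambda(\Gamma)$-characteristic series (the terms being cofinitely generated over $\BZ_p$ by Lemma \ref{corankglo} and Theorem \ref{MHG}) to replace $f_{H^1(G_S(F^\cyc),A_{p^\infty})}$ and cancel the factors at $v\in S\setminus S'$.
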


\begin{proof}
Combining the computations from Lemmas \ref{Akglobal} and \ref{AKlocal} with Equation (\ref{akses}), we obtain
\[f_{H^1(G_S(F^\cyc), A_{p^\infty})}  \prod_{i\geq 1}\left(f_{H^i\left(H, A_{p^\infty}(F_\infty)\right)}\right)^{(-1)^i} = \mathrm{Ak}\left(\Sel(A/F_\infty)\right)\prod_{v\in S\setminus S'} f_{J_v(A/F^{\mathrm{cyc}})}. \]
By Theorem \ref{thmsurj}, we have the short exact sequence
$$0\to \Sel(A/F^\cyc)\to H^1(G_S(F^\cyc),A_{p^\infty})\to \bigoplus_{v\in S} J_v(A/F^\cyc)\to 0,$$
where all modules are cofinitely generated over $\BZ_p$, by Lemma \ref{corankglo} and Theorem \ref{MHG}. Therefore,
\[ f_{H^1(G_S(F^\cyc), A_{p^\infty})} =  f_{\Sel(A/F^{\mathrm{cyc}})}\prod_{v\in S} f_{J_v(A/F^{\mathrm{cyc}})}.\]
Substituting this into the earlier expression, we obtain the required conclusion of the theorem.
\end{proof}

We record an interesting corollary of our Theorem \ref{akashithm} which serves as an analogue of a result of Howson \cite{howson} (also see \cite{CH}).

\begin{cor} \label{Howson rank}
Retain the assumptions of Theorem \ref{akashithm}. Suppose further that $G$ is pro-$p$ with dimension at least $2$. Then the following formula holds:
\begin{equation*}
\begin{split} \mathrm{rank}_{\Lambda(H)} X(A/F_\infty) = \mathrm{rank}_{\BZ_p}X(A/F^\cyc) - \mathrm{rank}_{\BZ_p}A_{p^\infty}(F^\cyc)^\vee\\ +\sum_{w\in S'(F^\cyc)} \mathrm{rank}_{\BZ_p} A_{p^\infty}(F^{\cyc}_w)^\vee.
\end{split}
\end{equation*}
\end{cor}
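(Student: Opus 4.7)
The plan is to extract the desired rank identity from Theorem \ref{akashithm} by taking $\lambda$-invariants (equivalently, the degree as a rational function in $T$) of both sides of the Akashi series factorisation. The general principle I would invoke is the Howson-type identity
\[
\rank_{\Lambda(H)}(M) \;=\; \sum_{i \ge 0}(-1)^i \lambda\bigl(H_i(H, M)\bigr) \;=\; \deg \mathrm{Ak}(M)
\]
valid for $M \in \fM_H(G)$, which one proves via a finite free resolution of $M/M(p)$ over $\Lambda(H)$ (the $p$-torsion part being $\Lambda(H)$-torsion, it contributes nothing to the rank). Applied to $M = X(A/F_\infty)$ and combined with Theorem \ref{akashithm}, this reduces the corollary to computing the $\lambda$-invariant of each of the three factors appearing on the right-hand side of that theorem.

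Two of those factors are immediate. By Theorem \ref{MHG}, $X(A/F^\cyc)$ has vanishing $\mu$-invariant, so $\lambda(f_{\Sel(A/F^\cyc)}) = \rank_{\BZ_p} X(A/F^\cyc)$. For the cohomology factors, each $H^i(H, A_{p^\infty}(F_\infty))$ is cofinitely generated over $\BZ_p$, hence has $\mu = 0$ and $\lambda$-invariant equal to its $\BZ_p$-corank. Moreover $A_{p^\infty}(F_\infty)^\vee$ is $\Lambda(H)$-torsion (being finitely generated over $\BZ_p$ with $\dim H \ge 1$), so its $H$-Euler characteristic vanishes: $\sum_{i \ge 0}(-1)^i \corank_{\BZ_p} H^i(H, A_{p^\infty}(F_\infty)) = 0$. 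Rearranging,
\[
\sum_{i \ge 1}(-1)^i \lambda\bigl(f_{H^i(H,A_{p^\infty}(F_\infty))}\bigr) \;=\; -\rank_{\BZ_p} A_{p^\infty}(F^\cyc)^\vee.
\]

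The core of the argument is the local calculation of $\lambda(f_{J_v(A/F^\cyc)})$ for $v \in S'$. Decomposing $J_v(A/F^\cyc) = \bigoplus_{w\mid v\text{ in }F^\cyc} H^1(F_w^\cyc, A_{p^\infty})$, it suffices to handle each $w \in S'(F^\cyc)$ separately. The Iwasawa-theoretic form of local Tate duality supplies an isomorphism of $\Lambda(\Gamma_w)$-modules $H^1(F_w^\cyc, A_{p^\infty})^\vee \simeq H^1_{\mathrm{Iw}}(F_w^\cyc, T_p A^*)$. A short $\Gamma_w$-cohomology argument, in the spirit of Lemma \ref{corankglo} and using the finiteness of $H^1(F_v, A_{p^\infty})$ together with the vanishing $H^2(F_v, A_{p^\infty}) = 0$, shows that $H^1(F_w^\cyc, A_{p^\infty})^\vee$ is $\Lambda(\Gamma_w)$-torsion, so that $\Hom_{\Lambda(\Gamma_w)}(H^1(F_w^\cyc, A_{p^\infty})^\vee, \Lambda(\Gamma_w)) = 0$. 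Feeding this into the local Jannsen spectral sequence
\[
E_2^{i,j} \;=\; \Ext^i_{\Lambda(\Gamma_w)}\bigl(H^j(F_w^\cyc, A_{p^\infty})^\vee,\, \Lambda(\Gamma_w)\bigr) \;\Longrightarrow\; H^{i+j}_{\mathrm{Iw}}(F_w^\cyc, T_p A^*)
\]
produces an isomorphism $H^1_{\mathrm{Iw}}(F_w^\cyc, T_p A^*) \simeq \Ext^1_{\Lambda(\Gamma_w)}\bigl(A_{p^\infty}(F_w^\cyc)^\vee, \Lambda(\Gamma_w)\bigr)$. Since the Iwasawa adjoint preserves both the $\mu$- and $\lambda$-invariants of a finitely generated $\BZ_p$-module with $\Gamma_w$-action, the right-hand side has $\mu = 0$ and $\lambda$-invariant equal to $\rank_{\BZ_p} A_{p^\infty}(F_w^\cyc)^\vee$. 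Summing over $w \in S'(F^\cyc)$ and combining with the earlier steps yields the formula in the statement.

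I anticipate the principal obstacle to be this local identification: carefully setting up local Tate duality and Jannsen's spectral sequence over the infinite extension $F_w^\cyc/F_v$, and verifying that the Iwasawa-adjoint mechanism correctly transports the $\BZ_p$-rank of $A_{p^\infty}(F_w^\cyc)^\vee$ onto the $\lambda$-invariant of $H^1(F_w^\cyc, A_{p^\infty})^\vee$.
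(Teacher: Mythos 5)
Your global reduction is the same as the paper's: apply Howson's formula $\rank_{\Lambda(H)}M=\sum_{i\geq 0}(-1)^i\rank_{\BZ_p}H_i(H,M)$ to read the $\Lambda(H)$-rank off the Akashi series, feed in Theorem \ref{akashithm}, and observe that the factor coming from $A_{p^\infty}(F_\infty)$ contributes only $-\rank_{\BZ_p}A_{p^\infty}(F^\cyc)^\vee$ because $A_{p^\infty}(F_\infty)^\vee$ is finitely generated over $\BZ_p$ and $\dim H\geq 1$. (Note that here you do not need your loosely stated ``general principle'' for arbitrary modules of $\fM_H(G)$: by Theorem \ref{MHG} the module $X(A/F_\infty)$ is finitely generated over $\Lambda(H)$, which is exactly the setting in which Howson's identity and the equality with the degree of the Akashi series are unproblematic.) Where you genuinely diverge is the local term at $v\in S'$. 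The paper argues elementarily: since $\dim G_w=2$, the field $F_{\infty,w}$ has no nontrivial $p$-extensions, so $H^1(F^\cyc_w,A_{p^\infty})\simeq H^1(H_w,A_{p^\infty}(F_{\infty,w}))$ with $\dim H_w\geq 1$, and Howson's formula over $\Lambda(H_w)$ (the $\Lambda(H_w)$-rank of a module finitely generated over $\BZ_p$ being zero) gives $\corank_{\BZ_p}H^1(F^\cyc_w,A_{p^\infty})=\rank_{\BZ_p}A_{p^\infty}(F^\cyc_w)^\vee$ at once. You instead go through local Tate duality, Jannsen's local spectral sequence and the fact that the Iwasawa adjoint preserves $\lambda$; this parallels the paper's Section \ref{section4} rather than Lemma \ref{AKlocal}, is heavier, but is workable (and does not actually use $v\in S'$).

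Two points in your local step need repair. First, the displayed spectral sequence is mis-stated: Jannsen's local spectral sequence pairs $H^j(F^\cyc_w,A_{p^\infty})$ with $H^{i+j}_{\mathrm{Iw}}(F^\cyc_w,T_pA)$, or $H^j(F^\cyc_w,A^*_{p^\infty})$ with $H^{i+j}_{\mathrm{Iw}}(F^\cyc_w,T_pA^*)$ as in the proof of Theorem \ref{thmsurj}; your $E_2$-page mixes $A_{p^\infty}$ with $T_pA^*$. Second, once the coefficients are made consistent -- say one runs everything with $A^*$, so that the duality $H^1(F^\cyc_w,A_{p^\infty})^\vee\simeq H^1_{\mathrm{Iw}}(F^\cyc_w,T_pA^*)$ feeds into the sequence for $A^*$, the torsionness input now being that $H^1(F^\cyc_w,A^*_{p^\infty})^\vee$ is $\Lambda(\Gamma_w)$-torsion (proved exactly as you indicate, with $A^*$ in place of $A$) -- the outcome is an identification with $\Ext^1_{\Lambda(\Gamma_w)}\bigl(A^*_{p^\infty}(F^\cyc_w)^\vee,\Lambda(\Gamma_w)\bigr)$, whose $\lambda$-invariant is $\rank_{\BZ_p}A^*_{p^\infty}(F^\cyc_w)^\vee$, not $\rank_{\BZ_p}A_{p^\infty}(F^\cyc_w)^\vee$ as you wrote. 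To land on the stated formula you must still identify these two ranks, for instance via a polarization of $A$ defined over $F$, which gives an isogeny $A\to A^*$ and hence an isomorphism $V_pA\simeq V_pA^*$ of Galois representations over $F^\cyc_w$. Both repairs are routine, so I would call your argument a correct alternative route with an $A$ versus $A^*$ bookkeeping slip rather than a wrong approach; but as written the spectral sequence is false and the final rank identification is missing.
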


\begin{proof}
In the course of the proof, whenever $N$ is a finitely generated torsion $\Lambda(\Gamma)$-module with vanishing $\mu$-invariant, we shall represent the characteristic series $f_N$ by a distinguished polynomial. In this context, it makes sense to speak of $\deg f_N$. In particular, $\rank_{\BZ_p}(N)=\deg f_N$. Now, suppose $M$ is a $\Lambda(G)$-module that is finitely generated over $\Lambda(H)$. Then $H_i(H,M)$ is finitely generated over $\BZ_p$ for every $i$, and we have
\begin{equation}\label{akashi-f1}
  \rank_{\Lambda(H)}(M) = \sum_{i\geq 0}(-1)^i\rank_{\BZ_p} H_i(H,M)
\end{equation}
(cf. \cite[Theorem 1.1]{howson}). Combining this with the previous observation, we see that $\rank_{\Lambda(H)}(M)$ can be computed from its Akashi series
 \[ \rank_{\Lambda(H)}(M) = \sum_{i\geq 0}(-1)^i\deg f_{H_i(H,M)}. \]
Now recall the conclusion of Theorem \ref{akashithm}, which we rewrite as
$$\mathrm{Ak}\left(\Sel(A/F_\infty)\right)=\mathrm{Ak}\left(A_{p^\infty}(F_\infty)\right)f_{\Sel(A/F^{\mathrm{cyc}})}f_{A_{p^\infty}(F^\cyc)}^{-1}\prod_{v\in  S'} f_{J_v(A/F^{\mathrm{cyc}})}.$$
Since $\dim H\geq 1$ by assumption, we know that $\rank_{\Lambda(H)}(A_{p^\infty}(F_\infty)^\vee)=0$. It remains to verify the identity
\[ \deg f_{J_v(A/F^{\mathrm{cyc}})} =\sum_{w|v}\rank_{\BZ_p} A_{p^\infty}(F^{\cyc}_w)^\vee\]
for $v\in S'$, where the sum runs over all primes $w$ of $F^\cyc$ above $v$. To this end, note that $J_v(A/F^{\mathrm{cyc}})= \oplus_{w|v}H^1(F^\cyc_w, A_{p^\infty})$.
For $v\in S'$, the proof of Lemma 5.4 shows that $H^1(F^\cyc_w, A_{p^\infty}) = H^1(H_w, A_{p^\infty}(F_{\infty,w}))$. Since $\dim H_w\geq 1$ and $\rank_{\BZ_p[[H_w]]}(A_{p^\infty}(F_{\infty,w})^\vee)=0$, it follows from a similar formula as \eqref{akashi-f1} and ${\rm cd}_p(H_w)=1$ that
\[ \rank_{\BZ_p}H^1(H_w, A_{p^\infty}(F_{\infty,w}))^\vee = \rank_{\BZ_p}H^0(H_w, A_{p^\infty}(F_{\infty,w}))^\vee = \rank_{\BZ_p} A_{p^\infty}(F^{\cyc}_w)^\vee.\]
This completes the proof of the corollary.
\end{proof}

\section{Control Theorem}\label{section6}

In this section, we will prove the following control theorem, which enables us to relate the order of vanishing at $T=0$ of the characteristic power series of $\Sel(A/F^\mathrm{cyc})$ to the $\BZ_p$-corank of the Selmer group over $\Sel(A/F)$ over the base field $F$.

\begin{thm}\label{controlthm} Let $A$ be an abelian variety defined over a global function field $F$ of characteristic different from $p$. Let $F^{\mathrm{cyc}}$ denote the cyclotomic $\BZ_p$-extension of $F$. Write $\Gamma_n=\Gal(F^{\rm cyc}/F_n)$, where $F_n$ is the $n$-th layer of the extension $F^{\mathrm{cyc}}/F$ such that $[F_n:F]=p^n$. Then the restriction map
$$\Sel(A/F_n)\to \Sel(A/F^{\mathrm{cyc}})^{\Gamma_n}$$
has a finite kernel and cokernel, which are bounded independently of $n$.
\end{thm}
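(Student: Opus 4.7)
The plan is to adapt Mazur's classical control theorem argument via a commutative diagram and the snake lemma. Write $\Gamma^n := \Gal(F^{\mathrm{cyc}}/F_n)$, a closed subgroup of $\Gamma = \Gal(F^{\mathrm{cyc}}/F)$ isomorphic to $\BZ_p$. First, I would build the Mazur-style commutative diagram whose rows are the three-term sequences $0 \to \Sel(A/K) \to H^1(G_S(K), A_{p^\infty}) \to \bigoplus_v J_v(A/K)$, with $K = F_n$ for the top row (exact by Proposition \ref{2-1-1}) and, for the bottom row, the sequence obtained by taking $\Gamma^n$-invariants of the short exact sequence over $F^{\mathrm{cyc}}$ from Theorem \ref{thmsurj} (left-exact in general). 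Denoting the three vertical restriction maps by $s_n$, $h_n$, and $g_n$, the snake lemma yields
\[ 0 \to \ker s_n \to \ker h_n \to \ker g_n \to \coker s_n \to \coker h_n, \]
so it suffices to uniformly bound $\ker h_n$, $\ker g_n$, and $\coker h_n$.

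Next I would identify these terms via inflation-restriction. Since $\Gamma^n \cong \BZ_p$ is a free pro-$p$ group of rank one, $\mathrm{cd}_p(\Gamma^n) = 1$ and hence $H^2(\Gamma^n, A_{p^\infty}(F^{\mathrm{cyc}})) = 0$. The five-term Hochschild--Serre exact sequence then yields $\coker h_n = 0$ and identifies $\ker h_n = H^1(\Gamma^n, A_{p^\infty}(F^{\mathrm{cyc}}))$. Similarly, local inflation-restriction combined with Shapiro's lemma identifies $\ker g_n$ with a direct sum of local cohomology groups $H^1(\Gamma^n_w, A_{p^\infty}(F^{\mathrm{cyc}}_w))$, where $w$ ranges (one per $\Gamma^n$-orbit) over the primes of $F^{\mathrm{cyc}}$ above each $v \in S$, and $\Gamma^n_w \subseteq \Gamma^n$ is the decomposition subgroup at $w$ (the summand vanishes when $v$ splits completely in $F^{\mathrm{cyc}}/F_n$).

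Finally I would bound these cohomology groups uniformly in $n$. Let $M$ denote either $A_{p^\infty}(F^{\mathrm{cyc}})$ or the local module $A_{p^\infty}(F^{\mathrm{cyc}}_w)$. Then $M$ is cofinitely generated over $\BZ_p$ as a subgroup of $A_{p^\infty} \cong (\BQ_p/\BZ_p)^{2 \dim A}$, and the relevant invariants $M^{\Gamma^n}$ or $M^{\Gamma^n_w}$ are finite for every $n$: globally by the Mordell--Weil theorem for $A(F_n)$ over the global function field $F_n$, and locally via reduction modulo $w$ (since the residue characteristic differs from $p$, the reduction map has pro-prime-to-$p$ kernel, so $A_{p^\infty}$ embeds in the finite group of $p$-power torsion of the reduction). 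Decomposing $M = M_{\mathrm{div}} \oplus M_{\mathrm{fin}}$ as a $\BZ_p$-module, this uniform finiteness of invariants forces the relevant $\BZ_p$-action on $M_{\mathrm{div}} \otimes \BQ_p$ to have no $p$-power root of unity as eigenvalue; consequently $\gamma^{p^n} - 1$ acts invertibly on $M_{\mathrm{div}} \otimes \BQ_p$, hence surjectively on $M_{\mathrm{div}}$, forcing $(M_{\mathrm{div}})_{\Gamma^n}$ (respectively $(M_{\mathrm{div}})_{\Gamma^n_w}$) to vanish. Right-exactness of coinvariants then yields $|H^1(\Gamma^n, M)| = |M_{\Gamma^n}| \leq |M_{\mathrm{fin}}|$, independently of $n$, and similarly for the local $H^1$. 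Combining these bounds through the snake lemma gives the desired uniform bounds on $|\ker s_n|$ and $|\coker s_n|$. The main point requiring careful attention is the uniform finiteness of $\Gamma^n_w$-invariants of the local modules at places of $F$ with non-trivial decomposition in $F^{\mathrm{cyc}}/F$, since the decomposition subgroups vary with $n$.
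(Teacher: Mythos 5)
Your proposal is correct and takes essentially the same route as the paper's own proof: the identical commutative diagram and snake lemma reduction, the same inflation--restriction identifications using $\mathrm{cd}_p=1$, finiteness of $A_{p^\infty}(F_n)$ via Mordell--Weil and of the local $p$-primary torsion, and uniform bounds coming from the structure of the cofinitely generated $\BZ_p$-modules $A_{p^\infty}(F^{\mathrm{cyc}})$ and $A_{p^\infty}(F^{\mathrm{cyc}}_w)$. The only cosmetic difference is in the last step: the paper passes to Pontryagin duals and invokes $\rank_{\BZ_p}M_{\Gamma_n}=\rank_{\BZ_p}M^{\Gamma_n}$ from \cite[Proposition 5.3.20]{nsw}, whereas you argue directly on the divisible part via eigenvalues of $\gamma$ acting on the Tate module (note that $M_{\mathrm{div}}\otimes\BQ_p$ should be read as $T_pM\otimes_{\BZ_p}\BQ_p$, and the splitting $M=M_{\mathrm{div}}\oplus M_{\mathrm{fin}}$ need not be $\Gamma$-stable, though only the bound by $\#\left(M/M_{\mathrm{div}}\right)$ is actually used).
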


\begin{proof} We follow closely the method used in the proof of the control theorem for fine Selmer groups over number fields, as presented in \cite{LimDM}. Consider the following commutative diagram with exact rows
$$
  \xymatrix{
  0  \ar[r]^{} & \Sel(A/F_n) \ar[d]_{\alpha_n} \ar[r]^{} & H^1(G_S(F_n),A_{p^\infty}) \ar[d]_{\beta_n} \ar[r]^{} & \bigoplus_{v\in S}J_v(A/F_n)\ar[d]^{\gamma_n} &   \\
  0 \ar[r]^{} & \Sel(A/F^{\rm cyc})^{\Gamma_n} \ar[r]^{} & H^1(G_S(F^{\rm cyc}),A_{p^\infty})^{\Gamma_n} \ar[r]_{} & \bigoplus_{v\in S}J_v(A/F^{\rm cyc})^{\Gamma_n}. &    }
$$
Since $\Gamma_n$ has $p$-cohomological dimension $1$, the inflation-restriction sequence gives:
\[\ker \beta_n=H^1(\Gamma_n, A_{p^\infty}(F^{\mathrm{cyc}}))\quad \textrm{and}\quad \coker \beta_n=0.\]
Let
$M=\left(A_{p^\infty}(F^{\mathrm{cyc}})\right)^\vee$. Then $M$ is a finitely generated $\BZ_p$-module with a continuous $\Gamma$-action, hence a finitely generated  torsion $\Lambda(\Gamma_n)$-module for every $n$. By \cite[Proposition 5.3.20]{nsw},
$$\rank_{\BZ_p}M_{\Gamma_n}=\rank_{\BZ_p}M^{\Gamma_n}< \infty.$$
By the Mordell--Weil theorem, $M_{\Gamma_n}=\left(A_{p^\infty}(F_n)\right)^\vee$ is finite, so  $\rank_{\BZ_p}M_{\Gamma_n}=0$. It follows that $M^{\Gamma_n}$ is also finite. Note that
$$H^1(\Gamma_n,A_{p^\infty}(F^{\mathrm{cyc}}))=\left(A_{p^\infty}(F^{\mathrm{cyc}})\right)_{\Gamma_n},$$
which is the Pontryagin dual of $M^{\Gamma_n}$. As $M$ is a finitely generated $\BZ_p$-module, the order of $M^{\Gamma_n}$ is bounded by the order of $M(p)$, which is independent of $n$. It remains to show that $\ker \gamma_n$ is finite and bounded independently of $n$. By the snake lemma, this will imply the same uniform boundedness for $\ker \alpha_n$ and $\coker \alpha_n$. To analyse $\gamma_n$, observe that $v\in S$ does not divide $p$, it suffices to show that for each prime $w$ of $F^{\mathrm{cyc}}$ lying above $v\in S$, the group $H^1(\Gamma_w, A_{p^\infty}(F^{\mathrm{cyc}}_w))$ is finite and bounded independently of $n$. This can be achieved by replacing $\Gamma_n$ with $\Gamma_w$, and $M$ with $\left(A_{p^\infty}(F^{\mathrm{cyc}}_w)\right)^\vee$, which remains a finitely generated $\BZ_p$-module, and then applying the same argument as above.
\end{proof}

In Theorem \ref{MHG}, we showed  that $X(A/F^{\mathrm{cyc}})$ is a finitely generated torsion $\Lambda(\Gamma)$-module with $\mu$-invariant zero. Thus, the structure theorem for finitely generated $\Lambda(\Gamma)$-modules implies that there exists a pseudo-isomorphism
\begin{equation}\label{Xcyc}X(A/F^{\mathrm{cyc}})\sim \bigoplus_{i=1}^k \Lambda(\Gamma)/(f_i^{a_i}),
\end{equation}
where we identify $\Lambda(\Gamma)\simeq \BZ_p[[T]]$ using our fixed topological generator of $\Gamma$, and each $f_i$ for $i=1,\ldots , k$, is a distinguished irreducible polynomial of degree $\lambda_i$. Consequently, $\Lambda(\Gamma)/(f_i^{a_i})$ is isomorphic to $\BZ_p^{a_i \lambda_i }$ as an abelian group, and
$$\rank_{\BZ_p}\left(X(A/F^{\mathrm{cyc}})\right)=\lambda=\sum_{i=1}^k a_i \lambda_i.$$
Moreover, we have $\prod_{i=1}^k f_i ^{a_i}= f_{\Sel(A/F^\mathrm{cyc})}$ up to a unit in $\Lambda(\Gamma)$.

\bigskip

We will make use of the following analogue of Greenberg's semi-simplicity conjecture (\cite[Conjecture 1.12]{greenberg}), originally posed for elliptic curves over number fields, but here considered for abelian varieties over global function fields.

\begin{conj}[Greenberg's semi-simplicity conjecture for $A/F$]\label{greenberg} Let $A$ be an abelian variety defined over a global function field $F$ with ${\rm char}(F)\neq p$. Then the action of $\Gamma$ on $X(A/F^{\mathrm{cyc}})$ is completely reducible, that is, in the decomposition \eqref{Xcyc}, we have $a_i=1$ for all $i$.
\end{conj}

We then have the following result:

\begin{prop}\label{controlprop} Let $A$ be an abelian variety defined over a global function field $F$. Then
$$\ord_{T=0}(f_{\Sel(A/F^{\mathrm{cyc}})})\geq \corank_{\BZ_p}(\Sel(A/F)).$$
Moreover, equality holds if Conjecture \ref{greenberg} is true.
\end{prop}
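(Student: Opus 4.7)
The plan is to combine Theorem \ref{controlthm} with the elementary divisor decomposition \eqref{Xcyc}.

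First I would apply Theorem \ref{controlthm} with $n=0$, which says that the restriction map $\Sel(A/F)\to \Sel(A/F^\mathrm{cyc})^{\Gamma}$ has finite kernel and cokernel. Hence
\[
\corank_{\BZ_p}\Sel(A/F) \;=\; \corank_{\BZ_p}\Sel(A/F^\mathrm{cyc})^{\Gamma} \;=\; \rank_{\BZ_p}\bigl(X(A/F^\mathrm{cyc})_{\Gamma}\bigr),
\]
where the second equality is Pontryagin duality $(M^{\Gamma})^{\vee} \cong (M^{\vee})_{\Gamma}$, and $X(A/F^\mathrm{cyc})_{\Gamma} = X(A/F^\mathrm{cyc})/TX(A/F^\mathrm{cyc})$ under the identification $\Lambda(\Gamma) \cong \BZ_p[[T]]$. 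Thus the statement reduces to proving
\[
\ord_{T=0} f_{\Sel(A/F^\mathrm{cyc})} \;\geq\; \rank_{\BZ_p}\bigl(X(A/F^\mathrm{cyc})_{\Gamma}\bigr),
\]
with equality under Conjecture \ref{greenberg}.

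Next I would unpack the pseudo-isomorphism \eqref{Xcyc}. Because $\Gamma \cong \BZ_p$ has $p$-cohomological dimension one, pseudo-null $\Lambda(\Gamma)$-modules are precisely the finite modules; splitting the pseudo-isomorphism into two short exact sequences and passing to $\Gamma$-coinvariants therefore changes $\BZ_p$-ranks only by finite contributions. Hence
\[
\rank_{\BZ_p}\bigl(X(A/F^\mathrm{cyc})_{\Gamma}\bigr) \;=\; \sum_{i=1}^{k}\rank_{\BZ_p}\bigl(\Lambda(\Gamma)/(f_i^{a_i})\bigr)_{\Gamma}.
\]
For each summand, $\bigl(\Lambda(\Gamma)/(f_i^{a_i})\bigr)_{\Gamma} \cong \BZ_p[[T]]/(f_i^{a_i},T) \cong \BZ_p/(f_i(0)^{a_i})$. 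Since each $f_i$ is a distinguished irreducible polynomial, Weierstrass preparation gives $f_i(0) = 0$ if and only if $f_i = T$; otherwise $f_i(0) \in p\BZ_p\setminus\{0\}$ and the quotient is finite. Therefore
\[
\rank_{\BZ_p}\bigl(X(A/F^\mathrm{cyc})_{\Gamma}\bigr) \;=\; \#\{i : f_i = T\}.
\]

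Finally, since $f_{\Sel(A/F^\mathrm{cyc})} = \prod_i f_i^{a_i}$ up to a unit and $T$ is the only distinguished irreducible polynomial that vanishes at $T=0$,
\[
\ord_{T=0} f_{\Sel(A/F^\mathrm{cyc})} \;=\; \sum_{i:\, f_i = T} a_i \;\geq\; \#\{i : f_i = T\},
\]
using that each $a_i\geq 1$. This yields the claimed inequality. Equality holds precisely when $a_i = 1$ for every index with $f_i = T$, which is guaranteed by Greenberg's semi-simplicity conjecture \ref{greenberg}. The only subtlety is the pseudo-null correction in the coinvariants calculation, and this is handled cleanly by the snake lemma together with the finiteness of pseudo-null $\Lambda(\Gamma)$-modules; there is no substantive obstacle beyond this bookkeeping.
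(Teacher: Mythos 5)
Your proposal is correct and follows essentially the same route as the paper's proof: dualise Theorem \ref{controlthm} to identify $\corank_{\BZ_p}\Sel(A/F)$ with $\rank_{\BZ_p}\bigl(X(A/F^{\mathrm{cyc}})_{\Gamma}\bigr)$, compute the latter from the decomposition \eqref{Xcyc} as the number of $f_i$ equal to $T$, and compare with $\ord_{T=0}\prod_i f_i^{a_i}$, with equality under Conjecture \ref{greenberg}. The only difference is that you spell out the finite pseudo-null corrections in the coinvariants step, which the paper leaves implicit; this bookkeeping is handled correctly.
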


\begin{proof} By Theorem \ref{controlthm}, the natural restriction map $\Sel(A/F)\to \Sel(A/F^{\mathrm{cyc}})^{\Gamma}$ has a finite kernel and cokernel, which by Pontryagin duality implies the same for the map $X(A/F^{\mathrm{cyc}})_{\Gamma} \to X(A/F)$.
It follows that
$$\rank_{\BZ_p}(X(A/F^{\mathrm{cyc}})_{\Gamma})=\corank_{\BZ_p}(\Sel(A/F)).$$
Next, note that \[\rank_{\BZ_p}(X(A/F^{\mathrm{cyc}})_{\Gamma})=\rank_{\BZ_p}\left(X(A/F^{\mathrm{cyc}})/T X(A/F^{\mathrm{cyc}})\right).\]
Since, for any distinguished polynomial $f(T)$, we have $\Lambda(\Gamma)/(f, T)  \simeq \BZ_p/ f(0) \BZ_p,$ it follows that the $\BZ_p$-rank of $\Lambda(\Gamma)/(f, T)$ is $1$ if and only if $T\mid f$.
Therefore, from \eqref{Xcyc}, the $\BZ_p$-rank of $X(A/F^{\mathrm{cyc}})/T X(A/F^{\mathrm{cyc}})$ is equal to the number of polynomials $f_i$ that are equal to $T$. Assuming Conjecture \ref{greenberg}, this is precisely to the power of $T$ dividing $f_{\Sel(A/F^{\mathrm{cyc}})}$, and the result follows.
\end{proof}

\section{The order of vanishing of the characteristic element}\label{section7}

In this section, we prove our main result concerning the order of vanishing of the characteristic element associated with the Selmer group over a $p$-adic Lie extension. We begin by recalling the definition of these characteristic elements.

As before, let $G$ denote a compact $p$-adic Lie group without $p$-torsion, and let $H$ be a closed normal subgroup of $G$ such that $\Gamma:=G/H\simeq \BZ_p$. First, we introduce the characteristic element $\xi_M$ attached to $M\in \mathfrak{M}_H(G)$. To this end, we recall from \cite{c-non} an equivalent definition of the category $\mathfrak{M}_H(G)$. A multiplicatively closed subset $\Sigma\subset\Lambda(G)$ is said to be a left and right Ore set if, for each $f\in \Sigma$ and $x\in \Lambda(G)$, there exist $g_1, g_2\in \Sigma$ and $y_1, y_2\in \Lambda(G)$ such that
$$f y_1=x g_1 \quad \text{and} \quad y_2 f=g_2 x.$$
We define
$$\Sigma=\{f\in \Lambda(G): \Lambda(G)/\Lambda(G)f \text{ is a finitely generated $\Lambda(H)$-module}\}.$$
It is shown in \cite[Theorem 2.4]{c-non} that $\Sigma$ is a multiplicatively closed left and right Ore set in $\Lambda(G)$. Since $p\notin \Sigma$, we also consider the enlarged set
$$\Sigma^*= \cup_{n\geq 0}p^n \Sigma,$$
which is again a multiplicatively closed left and right Ore set in $\Lambda(G)$. We may therefore localise $\Lambda(G)$ at $\Sigma$ and at $\Sigma^*$ to obtain $\Lambda(G)_\Sigma$ and $\Lambda(G)_{\Sigma^*}=\Lambda(G)_\Sigma \left[\frac{1}{p}\right]$, respectively. It follows from \cite[Proposition 2.3]{c-non} that a finitely generated $\Lambda(G)$-module $M$ lies in $ \mathfrak{M}_H(G)$ if and only if $M$ is $\Sigma^*$-torsion.
Since the Iwasawa algebra $\Lambda(G)$ is Noetherian and $G$ has no element of order $p$, the global dimension of $\Lambda(G)$ is finite. From \cite{c-non}, $K_0(\Lambda(G))$ can be identified with the Grothendieck group of the category of all finitely generated
$\Lambda(G)$-modules.
From the localisation sequence in $K$-theory for the Ore set $\Sigma^*$, we obtain the following exact sequence
$$\cdots \to K_1(\Lambda(G))\to K_1(\Lambda(G)_{\Sigma^*})\stackrel{\partial_G}{\longrightarrow}K_0(\mathfrak{M}_H(G))\to K_0(\Lambda(G))\to K_0(\Lambda(G)_{\Sigma^*})\to 0.$$
By \cite[Proposition 3.4]{c-non}, the map $\partial_G$ is surjective.
 In light of this result, we can define a characteristic element for each
 $M\in\mathfrak{M}_H(G)$.

\begin{defn} Given $M\in\mathfrak{M}_H(G)$, a characteristic element for $M$ is any element $\xi_M\in K_1(\Lambda(G)_{\Sigma^*})$ satisfying
$$\partial_G(\xi_M)=[M],$$
where $[M]$ denotes the class of $M$ in $K_0(\Lambda(G))$.
\end{defn}

We fix an algebraic closure $\overline{\BQ}_p$ of $\BQ_p$. Let $$\rho: G\to \GL_n(\mathcal{O})$$
be an Artin representation, where $\mathcal{O}$ is the ring of integers of a finite extension of $\BQ_p$ in $\overline{\BQ}_p$. We write $\Lambda_{\mathcal{O}}(\Gamma)$ for the Iwasawa algebra of $\Gamma$ with coefficients in $\mathcal{O}$, which we identify with the formal power series ring $\mathcal{O}[[T]]$ by sending a fixed topological generator of $\Gamma$ to $1+T$. Let $\mathcal{Q}_{\mathcal{O}}(\Gamma)$ denote the field of fractions of $\Lambda_{\mathcal{O}}(\Gamma)$. If $M$ is a finitely generated $\Lambda(G)$-module, we define $M_{\mathcal{O}}=M\otimes_{\BZ_p} \mathcal{O}$, and set
\begin{equation}\label{twist}\mathrm{tw}_{\rho}(M)=M_{\mathcal{O}}\otimes_{\mathcal{O}} \mathcal{O}^n
\end{equation}
where $\CO^n$ is endowed with the action of  $G$ via $\rho$, and $G$ acts diagonally on the tensor product.

The representation $\rho$ defines a continuous group homomorphism
$$G\to (M_n(\mathcal{O})\otimes_{\BZ_p}\Lambda(\Gamma))^\times$$
which maps each $g$ to $\rho(g)\otimes \bar{g}$, where $\bar{g}$ denotes the image of $g$ in $\Gamma$. By \cite[Lemma 3.3]{c-non}, this extends to a ring homomorphism
$$\Lambda(G)_{\Sigma^*}\to M_n(\mathcal{Q}_\mathcal{O}(\Gamma)).$$ By the functoriality of $K_1$-groups, this in turn induces a group homomorphism
$$K_1(\Lambda(G)_{\Sigma^*})\to K_1(M_n(\mathcal{Q}_\mathcal{O}(\Gamma))).$$
We define $\Phi_\rho$ to be the composite map
\begin{equation}\label{Phi_rho}\Phi_\rho: K_1(\Lambda(G)_{\Sigma^*})\to K_1(M_n(\mathcal{Q}_\mathcal{O}(\Gamma))) \stackrel{\text{Morita}}{\simeq} K_1(\mathcal{Q}_\mathcal{O}(\Gamma))\simeq \mathcal{Q}_\mathcal{O}(\Gamma)^\times,
\end{equation}
where the first isomorphism is given by Morita's equivalence. We also identify $\mathcal{Q}_\mathcal{O}(\Gamma)^\times$ with $\mathcal{Q}_\mathcal{O}(T)^\times$, where $\mathcal{Q}_\mathcal{O}(T)$ denotes the field of fractions of $\mathcal{O}[[T]]$.

\begin{defn}[Burns \cite{burns}] Let $\xi\in K_1(\Lambda(G)_{\Sigma^*})$, and let $\rho$ be an Artin representation of $G$. Then there exists an integer $r_\rho(\xi)$ and $g(T)\in \mathcal{Q}_\mathcal{O}(T)^\times$, with $g(0)\neq 0$, such that
$$\Phi_\rho(\xi)=T^{r_\rho(\xi)}g(T).$$
We call $r_\rho(\xi)$ the order of vanishing of $\xi$ at $\rho$, and denote it by $\ord_{T=0}(\Phi_{\rho}(\xi))$.
\end{defn}

We now relate the evaluation of the characteristic element of a module $M\in\mathfrak{M}_H(G)$ at Artin representations of $G$ to the Akashi series of the twists of $M$ by such representations defined by \eqref{twist}.

\begin{lem}\label{evalxi} Let $M$ be a module in the category $\mathfrak{M}_H(G)$, and let $\xi_M\in K_1(\Lambda(G)_{\Sigma^*})$ denote its characteristic element. Then we have
$$\Phi_\rho(\xi_M)=\mathrm{Ak}(\mathrm{tw}_{\hat{\rho}}(M))\bmod \Lambda_\mathcal{O}(\Gamma)^\times,$$
where $\hat{\rho}$ denotes the contragradient representation of $\rho$, defined by $\hat{\rho}(g)=\rho(g^{-1})^t$ for $g\in G$, where $t$ denotes the transpose matrix.
\end{lem}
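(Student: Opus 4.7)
The plan is to use the surjectivity of the boundary map $\partial_G\colon K_1(\Lambda(G)_{\Sigma^*}) \to K_0(\mathfrak{M}_H(G))$ to reduce the statement to the commutativity of a single $K$-theoretic diagram, which can then be checked on finite projective resolutions.

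First, I would construct the square
\[\xymatrix{
K_1(\Lambda(G)_{\Sigma^*}) \ar[r]^-{\Phi_\rho} \ar[d]_{\partial_G} & K_1(\mathcal{Q}_\mathcal{O}(\Gamma)) = \mathcal{Q}_\mathcal{O}(\Gamma)^\times \ar[d]^{\partial_\Gamma} \\
K_0(\mathfrak{M}_H(G)) \ar[r]^-{\Psi_\rho} & K_0(\mathfrak{M}(\Gamma)_\mathcal{O})
}\]
where $\mathfrak{M}(\Gamma)_\mathcal{O}$ is the category of finitely generated torsion $\Lambda_\mathcal{O}(\Gamma)$-modules and $\Psi_\rho$ is defined by $[N] \mapsto \sum_{i\ge 0}(-1)^i[H_i(H,\mathrm{tw}_{\hat{\rho}}(N))]$. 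The well-definedness of $\Psi_\rho$ would be verified by noting that $\mathrm{tw}_{\hat{\rho}}(N)$ has the same underlying $\Lambda(H)$-module structure (up to extension of scalars) as a direct sum of $n$ copies of $N_\mathcal{O}$, so it again lies in $\mathfrak{M}_H(G)$; hence by \cite[Lemma 3.1]{c-non} each $H_i(H,\mathrm{tw}_{\hat{\rho}}(N))$ is finitely generated torsion over $\Lambda_\mathcal{O}(\Gamma)$. Additivity under short exact sequences is immediate from the long exact sequence of $H$-homology.

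Second, I would verify the commutativity of the square. Since $G$ has no $p$-torsion, $\Lambda(G)$ has finite global dimension $\dim(G)+1$, so $M$ admits a bounded resolution $P_\bullet\to M$ by finitely generated projective $\Lambda(G)$-modules. After inverting $\Sigma^*$ this resolution becomes acyclic, and its class in $K_1(\Lambda(G)_{\Sigma^*})$ realises $\xi_M$ modulo the image of $K_1(\Lambda(G))$. On the $\Phi_\rho$ side, applying the ring map $\Lambda(G)\to M_n(\Lambda_\mathcal{O}(\Gamma))$ and Morita equivalence amounts to base changing $P_\bullet$ along this map to obtain a bounded complex of free $\Lambda_\mathcal{O}(\Gamma)$-modules; by the localisation sequence for $\Lambda_\mathcal{O}(\Gamma)$, the image of $\Phi_\rho(\xi_M)$ under $\partial_\Gamma$ is the alternating Euler class of this complex. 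On the $\Psi_\rho$ side, the same base change of $P_\bullet$ computes the derived coinvariants $H_\bullet(H, \mathrm{tw}_{\hat{\rho}}(M))$: the $\Lambda(G)$-action on the bimodule $\Lambda_\mathcal{O}(\Gamma)^n$ underlying the base change is $g\mapsto \hat{\rho}(g)\bar{g}$, which is precisely the action one obtains upon identifying a left action by $\rho$ on column vectors with the diagonal action used to define the twist \eqref{twist} via the transpose-inverse relation. Both sides of the square therefore produce the same Euler class in $K_0(\mathfrak{M}(\Gamma)_\mathcal{O})$.

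Finally, combining these two points, both $\Phi_\rho(\xi_M)$ and $\mathrm{Ak}(\mathrm{tw}_{\hat{\rho}}(M))$ are preimages of $\Psi_\rho([M])$ under $\partial_\Gamma$, the latter because the Akashi series of any $N\in\mathfrak{M}_H(G)$ is, by definition \eqref{3-f-3}, a choice of preimage of $\sum_{i}(-1)^i[H_i(H,N)]$. Since $\ker(\partial_\Gamma)=\Lambda_\mathcal{O}(\Gamma)^\times$ by the localisation sequence for $\Lambda_\mathcal{O}(\Gamma)$, the two elements agree modulo $\Lambda_\mathcal{O}(\Gamma)^\times$, which is exactly the ambiguity stated in the lemma. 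The main obstacle will be the bookkeeping in step two that forces $\hat{\rho}$ rather than $\rho$ to appear: this is the standard duality arising when one compares the left action by $g\mapsto\rho(g)\otimes\bar{g}$ used to define $\Phi_\rho$ with the diagonal tensor-product action used to define $\mathrm{tw}_\rho$, and it is the precise reason the contragredient must be inserted into the statement.
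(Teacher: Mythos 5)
Your argument is correct in outline, but it takes a different route from the paper: the paper's proof of this lemma is a two-line citation, namely that $\mathrm{tw}_{\hat{\rho}}(M)\in\mathfrak{M}_H(G)$ by Lemma 3.2 of \cite{c-non} and that the stated equality is precisely formula (57) of \cite{c-non}, whereas you reconstruct the proof of that formula from scratch via the commutative square of localisation sequences, with $\partial_G$, $\partial_\Gamma$ and the Euler-class map $\Psi_\rho$. What your route buys is self-containedness (it is essentially the argument underlying the cited formula, as in Coates et al.\ and Burns--Venjakob), at the cost of having to carry out the one genuinely substantive computation that the citation hides: the identification of the base change of a projective resolution along $\Lambda(G)\to M_n(\Lambda_\mathcal{O}(\Gamma))$ with the $H$-homology of the twist, i.e.\ $\mathrm{Tor}_i^{\Lambda(G)}\bigl(\Lambda_\mathcal{O}(\Gamma)^n, M\bigr)\simeq H_i\bigl(H,\mathrm{tw}_{\hat{\rho}}(M)\bigr)$, including the transpose-inverse bookkeeping that forces $\hat{\rho}$; you correctly flag this but only sketch it. Two smaller points to tighten if you write this out: your claim that $\mathrm{tw}_{\hat{\rho}}(N)$ has the same underlying $\Lambda(H)$-module structure as $N_\mathcal{O}^n$ is only true after restricting to the open subgroup $H\cap\ker\rho$ (finite generation over $\Lambda(H\cap\ker\rho)$ then gives it over $\Lambda(H)$, which is the actual content of \cite[Lemma 3.2]{c-non}); and in the final step you should note that finite $\Lambda_\mathcal{O}(\Gamma)$-modules have trivial class in $K_0$ of the torsion category and fix a sign normalisation for $\partial_\Gamma$, so that the Akashi series, defined through characteristic power series of the $H_i$, really is a preimage of $\Psi_\rho([M])$; with $\ker\partial_\Gamma=\Lambda_\mathcal{O}(\Gamma)^\times$ the conclusion then follows as you say, and the ambiguity in the choice of $\xi_M$ (the image of $K_1(\Lambda(G))$) also lands in $\Lambda_\mathcal{O}(\Gamma)^\times$, consistent with the statement being an equality modulo units.
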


\begin{proof} By \cite[Lemma 3.2]{c-non}, the twisted module $\mathrm{tw}_{\hat{\rho}}(M)$ also lies in the category  $\mathfrak{M}_H(G)$. Hence, the Akashi series $\mathrm{Ak}(\mathrm{tw}_{\hat{\rho}}(M))$ is well-defined, and the claimed equality follows from \cite[(57)]{c-non}.
\end{proof}

Given an Artin representation $\rho: G\to GL_n(\mathcal{O})$ of $G$ with coefficients in $\mathcal{O}$, let $F_\rho$ be the Galois extension of $F$ contained in $F_\infty$ such that $\Gal(F_\infty/F_\rho) = \ker \rho$. We then write $G_\rho = \Gal(F_\infty/F_\rho)$. In the case where $\rho=\mathrm{reg}_L$ is the regular representation corresponding to a finite Galois extension $L$ of $F$ contained in $F_\infty$, we simply write $G_L=\Gal(F_\infty/L)$ for $G_{\mathrm{reg}_L}$. We now introduce two assumptions ($\mathbf{G}$) and ($\mathbf{H}$), which will be useful for the computations in this section and in Section~\ref{section9}.
\[
\mathbf{(G)}: \quad H^i(G, A_{p^\infty}(F_\infty)) \text{ is finite for all } i \geq 1.
\]
\[
\mathbf{(H)}: \quad H^i(H, A_{p^\infty}(F_\infty)) \text{ is finite for all } i \geq 0.
\]

\begin{remark}  $(\mathbf{G})$ is clearly satisfied if $A_{p^\infty}(F_\infty)$ is finite. It also holds in the case when $F_\infty=F(A_{p^\infty})$ (see \cite[Section 3]{bv15}). Note that $(\mathbf{H})$ implies $(\mathbf{G})$.
\end{remark}

Similarly, we write ($\mathbf{G_\rho}$) when replacing $G$ with $G_\rho$, and $(\mathbf{G_L})$ in the case where $\rho=\mathrm{reg}_L$ for a finite Galois extension $L$ of $F$ contained in $F_\infty$, with $G_L=\Gal(F_\infty/L)$.

The following proposition provides a criterion for verifying assumption  $(\mathbf{H})$.

\begin{prop}\label{propH} Let $A$ be an abelian variety defined over a global function field $F$ with $\mathrm{char}(F)\neq p$. Suppose that $A_{p^\infty}(F^{\mathrm{cyc}})$ is finite and $H\simeq \BZ_p^r$. Then $(\mathbf{H})$ holds, i.e., $H^i(H,A_{p^\infty}(F_\infty))$ is finite for all $i\geq 0$.
\end{prop}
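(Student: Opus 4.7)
My plan is to set $W := A_{p^\infty}(F_\infty)$ and reduce to the case where $W$ is divisible. Writing $W_{\mathrm{div}}$ for the maximal divisible subgroup of $W$, the quotient $W/W_{\mathrm{div}}$ is a finite abelian $p$-group since $W \subset A_{p^\infty}(\bar F) \simeq (\BQ_p/\BZ_p)^{2\dim A}$. Because $H \simeq \BZ_p^r$ is topologically finitely generated, $H^i(H, W/W_{\mathrm{div}})$ is finite for every $i$, so the long exact sequence of cohomology reduces the problem to the finiteness of $H^i(H, W_{\mathrm{div}})$. I may therefore assume $W$ is divisible, and set $T := T_p W$, a free $\BZ_p$-module of rank $d = \corank_{\BZ_p}(W)$ with continuous $H$-action, together with $V := T \otimes_{\BZ_p} \BQ_p$, fitting into a short exact sequence $0 \to T \to V \to W \to 0$ of continuous $H$-modules. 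A first key step is to verify that $V^H = 0$: the image of the $\BQ_p$-vector space $V^H$ in the finite group $W^H = A_{p^\infty}(F^{\mathrm{cyc}})$ is a divisible subgroup of a finite group, hence trivial, so $V^H$ lies inside $T$; but a $\BQ_p$-vector space sitting inside a finitely generated $\BZ_p$-module must vanish, as every $v \in V^H$ lies in $\bigcap_n p^n T = 0$.

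The main obstacle will be to upgrade $V^H = 0$ to the full vanishing $H^i(H, V) = 0$ for every $i \geq 0$. I plan to fix commuting topological generators $\gamma_1, \dots, \gamma_r$ of $H$ and perform the Fitting decomposition $V = V^{\mathrm{nil}}_i \oplus V^{\mathrm{inv}}_i$ with respect to each $\BQ_p$-linear operator $\gamma_i - 1 \in \End_{\BQ_p}(V)$, in which $\gamma_i - 1$ is nilpotent on $V^{\mathrm{nil}}_i$ and bijective on $V^{\mathrm{inv}}_i$; commutativity of $H$ ensures that these decompositions are preserved by every $\gamma_j$ and hence are $H$-stable. On the invertible summand $V^{\mathrm{inv}}_i$ the complex $V^{\mathrm{inv}}_i \xrightarrow{\gamma_i - 1} V^{\mathrm{inv}}_i$ is acyclic, so $H^\bullet(\langle \gamma_i \rangle, V^{\mathrm{inv}}_i) = 0$, and the Hochschild--Serre spectral sequence for the quotient $H/\langle \gamma_i \rangle$ forces $H^\bullet(H, V^{\mathrm{inv}}_i) = 0$. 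Iterating through $i = 1, \dots, r$ reduces the problem to a subspace $V^* \subset V$ on which every $\gamma_i - 1$ acts nilpotently. A direct application of Engel's theorem to the commuting nilpotent endomorphisms $\{(\gamma_i - 1)|_{V^*}\}$ then produces a non-zero simultaneous kernel vector whenever $V^* \neq 0$, which is ruled out by $V^H = 0$; hence $V^* = 0$ and $H^\bullet(H, V) = 0$.

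The conclusion follows directly from the long exact sequence of $H$-cohomology attached to $0 \to T \to V \to W \to 0$, which yields isomorphisms $H^i(H, W) \simeq H^{i+1}(H, T)$ for every $i \geq 0$. Each $H^{i+1}(H, T)$ is a finitely generated $\BZ_p$-module (since $T$ is and $H$ is topologically finitely generated), and its rationalisation $H^{i+1}(H, T) \otimes_{\BZ_p} \BQ_p \simeq H^{i+1}(H, V)$ vanishes by the previous paragraph. Therefore $H^{i+1}(H, T)$ is $\BZ_p$-torsion and hence finite, proving that $H^i(H, A_{p^\infty}(F_\infty))$ is finite for all $i \geq 0$.
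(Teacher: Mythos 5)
Your argument is correct, but it is genuinely different from the one in the paper. The paper disposes of this proposition in one stroke by a purely module-theoretic fact: for $H\simeq \BZ_p^r$ and a finitely generated $\Lambda(H)$-module $M$ (here $M=A_{p^\infty}(F_\infty)^\vee$), finiteness of $H_0(H,M)$ forces finiteness of $H_i(H,M)$ for all $i$; this is essentially the statement that Koszul homology over the regular local ring $\Lambda(H)\simeq\BZ_p[[T_1,\dots,T_r]]$ has finite length once the degree-zero term does (cited from Serre's \emph{Local Algebra}, with a detailed proof in \cite{LimKlimit}), and the hypothesis that $A_{p^\infty}(F^{\mathrm{cyc}})=A_{p^\infty}(F_\infty)^H$ is finite is exactly the finiteness of $H_0(H,M)$. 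You instead pass to the Tate module $T$ and the rational representation $V=T\otimes_{\BZ_p}\BQ_p$, prove $H^\bullet(H,V)=0$ from $V^H=0$ by the Fitting decomposition for each $\gamma_i-1$ together with the commuting-nilpotents (Engel) argument, and then read off finiteness of $H^i(H,W)\simeq H^{i+1}(H,T)$ from finite generation over $\BZ_p$ plus vanishing rank; the same arithmetic input ($A_{p^\infty}(F^{\mathrm{cyc}})$ finite) enters through $V^H=0$. Your route is more self-contained and linear-algebraic, while the paper's applies verbatim to any finitely generated $\Lambda(H)$-module, not only to modules finitely generated over $\BZ_p$. Two small points you should make explicit to be fully rigorous: the comparison between the discrete cohomology of $W$ and the continuous cohomology of $T$ and $V$ (the long exact sequence for $0\to T\to V\to W\to 0$, finite generation of $H^i(H,T)$, and $H^i(H,V)\simeq H^i(H,T)\otimes_{\BZ_p}\BQ_p$ — all standard for $H\simeq\BZ_p^r$ since $T$ is open in $V$ and the $H^i(H,T/p^n)$ are finite), and the use of Hochschild--Serre with $\BQ_p$-coefficients; the latter can be avoided entirely by computing $H^\bullet(H,V)$ via the Koszul complex of the commuting operators $\gamma_1-1,\dots,\gamma_r-1$, which is acyclic as soon as one of them acts invertibly, after which your Fitting/Engel reduction goes through unchanged.
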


\begin{proof}
The conclusion follows from the following algebraic observation: If $H\cong \BZ_p^r$, then for a any $\Lambda(H)$-module $M$, the cohomology groups $H_i(H,M)$ are finite for all $i\geq 1$, provided that $H_0(H,M)$ is finite. This result essentially follows from \cite[pp.\,56--57]{Selocal}, although the argument there is somewhat terse. For a detailed proof, we refer the reader to \cite[Proof of Theorem 2.3]{LimKlimit}.
\end{proof}

We also recall, by Remark \ref{remUlmer}, that the finiteness condition on $A_{p^\infty}(F^{\mathrm{cyc}})$ automatically holds when $A$ is an elliptic curve without complex multiplication.

We now prove the following proposition, which highlights the significance of assumption $(\mathbf{G})$.

\begin{prop}\label{prophypG} Let $A$ be an abelian variety defined over a global field of characteristic different from~$p$. Suppose that condition $(\mathbf{G})$ holds. Then, for all $i\geq1$, we have $\ord_{T=0}\left(f_{H^{i}(H,A_{p^\infty}(F_\infty))}\right)=0$.
\end{prop}

\begin{proof} To show that the characteristic series of $H^{i}(H,A_{p^\infty}(F_\infty))$ does not contribute a factor of $T$, it suffices to show that $H^{i}(H,A_{p^\infty}(F_\infty))^\Gamma$ is finite for all $i\geq1$. Since $\mathrm{cd}_p\Gamma=1$, the Hochschild-Serre spectral sequence
$$H^{r}(\Gamma, H^{s}(H, A_{p^\infty}(F_\infty)))\Rightarrow H^{r+s}(G,A_{p^\infty}(F_\infty))$$
degenerates to a short exact sequence
$$0\to H^1(\Gamma, H^{i-1}(H,A_{p^\infty}(F_\infty)))\to H^i(G,A_{p^\infty}(F_\infty))\to H^i(H,A_{p^\infty}(F_\infty))^\Gamma\to 0$$
for every $i\geq 1$. Since $H^i(G,A_{p^\infty}(F_\infty))$ is finite for every $i\geq 1$ by assumption $(\mathbf{G})$, it follows that $H^i(H,A_{p^\infty}(F_\infty))^\Gamma$ is also finite for every $i\geq 1$, as required.
\end{proof}

Now, if $L$ is a finite Galois extension of $F$ contained in $F_\infty$, we write $\mathrm{reg}_L$ for the regular representation of $\Gal(L/F)$, and let $G_L=\Gal(F_\infty/L)$.

\begin{thm}\label{ordreg} Let $A$ be an abelian variety defined over a global function field $F$ with $\mathrm{char}(F)\neq p$, and let $F_\infty/F$ be an admissible $p$-adic Lie extension. Denote by $\xi_A=\xi_{X(A/F_\infty)}$ the characteristic element of $X(A/F_\infty)$. Suppose that the condition $(\mathbf{G_L})$ holds. Then we have $$\ord_{T=0}(\Phi_{\mathrm{reg}_L}(\xi_A))\geq \corank_{\BZ_p}(\Sel(A/L)).$$
Moreover, equality holds if Greenberg's semi-simplicity conjecture \ref{greenberg} is valid for $A/L$.
\end{thm}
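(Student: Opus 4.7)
My plan is to reduce the computation to the cyclotomic level over the base $L$, and then invoke Proposition \ref{controlprop} applied to $A/L$. By Lemma \ref{evalxi}, together with the self-duality of the regular representation of a finite group ($\widehat{\mathrm{reg}_L}\simeq \mathrm{reg}_L$), one obtains
\[
\Phi_{\mathrm{reg}_L}(\xi_A) \equiv \mathrm{Ak}_G\bigl(\mathrm{tw}_{\mathrm{reg}_L}(X(A/F_\infty))\bigr) \pmod{\Lambda(\Gamma)^\times}.
\]
The projection formula identifies $\mathrm{tw}_{\mathrm{reg}_L}(X(A/F_\infty))$ with $\mathrm{Ind}_{G_L}^G X(A/F_\infty)$. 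Since $H$ is normal in $G$, Mackey's decomposition together with Shapiro's lemma yields an isomorphism $H_i(H,\mathrm{Ind}_{G_L}^G X(A/F_\infty))\simeq \mathrm{Ind}_{\Gamma_L}^\Gamma H_i(H_L, X(A/F_\infty))$ of $\Lambda(\Gamma)$-modules, where $H_L:=H\cap G_L=\Gal(F_\infty/L^{\mathrm{cyc}})$, $\Gamma_L:=G_L/H_L=\Gal(L^{\mathrm{cyc}}/L)$, and $L^{\mathrm{cyc}}:=L\cdot F^{\mathrm{cyc}}$ is the cyclotomic $\BZ_p$-extension of $L$. Standard properties of characteristic ideals under induction from $\Lambda(\Gamma_L)$ to $\Lambda(\Gamma)$, combined with the identification $T_L=(1+T)^{[\Gamma:\Gamma_L]}-1$ (which has $T$-adic order exactly one), then give
\[
\ord_{T=0}\Phi_{\mathrm{reg}_L}(\xi_A) = \ord_{T_L=0}\mathrm{Ak}_{G_L}(X(A/F_\infty)).
\]

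Since $F_\infty/L$ is itself an admissible $p$-adic Lie extension with Galois group $G_L$, Theorem \ref{akashithm} applied at the base $L$ gives
\[
\mathrm{Ak}_{G_L}(X(A/F_\infty)) = f_{X(A/L^{\mathrm{cyc}})}\cdot \prod_{i\geq 1} f_{H^i(H_L,A_{p^\infty}(F_\infty))}^{(-1)^i} \cdot\prod_{v\in S'_L} f_{J_v(A/L^{\mathrm{cyc}})},
\]
where $S'_L$ consists of the primes in $S$ with infinite inertia in $G_L$. Under Hypothesis $(\mathbf{G_L})$, Proposition \ref{prophypG} applied to the triple $(G_L,H_L,\Gamma_L)$ shows that each $H^i$-factor contributes zero to $\ord_{T_L=0}$. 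For each local factor with $v\in S'_L$, I claim $\ord_{T_L=0}f_{J_v(A/L^{\mathrm{cyc}})}=0$: since every prime of a global function field has residue characteristic $\ell\neq p$, local Tate duality combined with the fact that $A^*(L_w)$ sits in an extension of a finite group by a pro-$\ell$ formal group forces $H^1(L_w,A_{p^\infty})$ to be finite. The inflation-restriction sequence for the completion (using that $\Gal(L^{\mathrm{cyc}}_{w'}/L_w)$ has $p$-cohomological dimension $\leq 1$) then presents $H^1(L^{\mathrm{cyc}}_{w'},A_{p^\infty})^{\Gal(L^{\mathrm{cyc}}_{w'}/L_w)}$ as a subquotient of this finite group, and Shapiro's lemma yields the finiteness of $J_v(A/L^{\mathrm{cyc}})^{\Gamma_L}$; equivalently, $T_L\nmid f_{J_v(A/L^{\mathrm{cyc}})}$.

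Combining the above, $\ord_{T_L=0}\mathrm{Ak}_{G_L}(X(A/F_\infty)) = \ord_{T_L=0}f_{X(A/L^{\mathrm{cyc}})}$. Finally, Proposition \ref{controlprop} applied with $L$ replacing $F$ gives
\[
\ord_{T_L=0}f_{X(A/L^{\mathrm{cyc}})}\geq \corank_{\BZ_p}\Sel(A/L),
\]
with equality under Greenberg's semi-simplicity conjecture \ref{greenberg} for $A/L$. Chaining these equalities and the inequality yields the theorem. The main obstacle I anticipate is the reduction in the first step: establishing that $\Phi_{\mathrm{reg}_L}(\xi_A)$ has the same order of vanishing at $T=0$ as $\mathrm{Ak}_{G_L}(X(A/F_\infty))$ at $T_L=0$ requires careful bookkeeping through the Mackey/Shapiro identification, since Akashi series are only defined up to units in their respective Iwasawa algebras and one must track how induced $\Lambda(\Gamma_L)$-modules behave when viewed as $\Lambda(\Gamma)$-modules.
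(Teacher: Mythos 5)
Your argument is correct, and its second half (Akashi series via Theorem \ref{akashithm} over the base $L$, killing the $H^i(H_L,A_{p^\infty}(F_\infty))$-factors by Proposition \ref{prophypG} under Hypothesis $(\mathbf{G_L})$, killing the local factors at $v\in S'_L$ by local Tate duality plus finiteness of $A^*_{p^\infty}(L_w)$, and concluding with Proposition \ref{controlprop} for $A/L$) is exactly the paper's proof. Where you genuinely differ is the reduction to the base field $L$: the paper invokes results of Lim (\cite{Lim22}, Subsection 2.2, Lemma 2.5 and Proposition 2.10) asserting that the restriction map $K_1(\Lambda(G)_{\Sigma^*})\to K_1(\Lambda(G_L)_{\Sigma_{G_L}^*})$ carries $\xi_A$ to a characteristic element of $X(A/F_\infty)$ in $\mathfrak{M}_{H_L}(G_L)$ and that $\ord_{T=0}(\Phi_{\mathrm{reg}_L}(\xi_A))=\ord_{T_L=0}(\Phi_{\mathrm{reg}_L,G_L}(\xi_A))$, whereas you stay entirely at the level of Akashi series: by Lemma \ref{evalxi} and self-duality of $\mathrm{reg}_L$, the projection formula $\mathrm{tw}_{\mathrm{reg}_L}(X)\simeq \mathrm{Ind}_{G_L}^{G}X$, the Mackey/Shapiro identification $H_i(H,\mathrm{Ind}_{G_L}^G X)\simeq \Lambda(\Gamma)\otimes_{\Lambda(\Gamma_L)}H_i(H_L,X)$ (valid since $H$ is normal, or more directly by tensoring a free $\Lambda(G_L)$-resolution of $X$ with $\Lambda(G)$ and taking $H$-coinvariants), and the fact that the $\Lambda(\Gamma)$-characteristic series of $\Lambda(\Gamma)\otimes_{\Lambda(\Gamma_L)}N$ is $f_N\bigl((1+T)^{[\Gamma:\Gamma_L]}-1\bigr)$ up to units, with $(1+T)^{[\Gamma:\Gamma_L]}-1$ having a simple zero at $T=0$. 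This settles the bookkeeping issue you flagged at the end, so your reduction is complete as stated. What your route buys is self-containedness: it avoids the $K_1$-theoretic machinery of \cite{Lim22} and uses only the tools already developed in the paper; what the paper's route buys is brevity and the extra structural fact that $\mathrm{res}(\xi_A)$ is itself a characteristic element over $G_L$, which is of independent use when one wants to evaluate at further representations of $G_L$ rather than only at $\mathrm{reg}_L$.
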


\begin{proof} Set $H_L=G_L\cap H$ and $\Gamma_L = \Gal(L^\cyc/L)$. Fix a suitable power of the chosen topological generator of $\Gamma$ so that it is a generator of $\Gamma_L$. With this choice, we have
\[ \BZ_p[[T_L]] \simeq \Lambda(\Gamma_L)\subseteq \Lambda(\Gamma) \simeq \BZ_p[[T]].\]
As discussed in \cite[Subsection 2.2 and Lemma 2.5]{Lim22}, there exists a restriction map
\[ \mathrm{res}: K_1(\Lambda(G)_{\Sigma^*}) \longrightarrow K_1(\Lambda(G_L)_{\Sigma_{G_L}^*})  \]
such that $\mathrm{res}(\xi_A)$ is a characteristic element of $X(A/F_\infty)$ viewed in $\mathfrak{M}_{H_L}(G_L)$.
Furthermore, \cite[Proposition 2.10]{Lim22} gives the identity
\[\ord_{T=0}(\Phi_{\mathrm{reg}_L}(\xi_A)) = \ord_{T_L=0}(\Phi_{\mathrm{reg}_L, G_L}(\xi_A)).\]
Thus, by the observations above, it suffices to treat the case $F=L$. By Lemma \ref{evalxi}, we have
$$\Phi_{\mathrm{reg}_F}(\xi_A)=\mathrm{Ak}(\Sel(A/F_\infty))u(T)$$
for some $u(T)\in \Lambda(T)^\times$.
Under the hypotheses of the theorem, we apply Proposition \ref{prophypG} to conclude that $\ord_{T=0}\left(f_{H^{j}(H,A_{p^\infty}(F_\infty))}\right)=0$ for all $j\geq 1$. Combining this with Theorem \ref{akashithm}, we obtain
$$\ord_{T=0}\left(\mathrm{Ak}\left(\Sel(A/F_\infty)\right)\right)=\ord_{T=0}\left(f_{\Sel(A/F^{\mathrm{cyc}})}\prod_{v\in  S'} f_{J_v(A/F^{\mathrm{cyc}})}\right).$$
By Proposition \ref{controlprop}, we know that the order of vanishing of $f_{\Sel(A/F^{\mathrm{cyc}})}$ at $T=0$ is at least $\corank_{\BZ_p}(\Sel(A/F))$, where equality holds if Conjecture \ref{greenberg} is valid.
It therefore remains to show that the terms $f_{J_v(A/F^{\mathrm{cyc}})}$ for $v\in S'$ do not contribute to the order of vanishing. To see this, it suffices to show that $J_v(A/F^{\mathrm{cyc}})^{\Gamma}$ is finite. Indeed, since $\Gamma_w\simeq \Gamma\simeq \BZ_p$ has $p$-cohomological dimension one, the inflation--restriction sequence yields a surjection
$$H^1(F_v,A_{p^\infty}) \twoheadrightarrow J_v(A/F^{\mathrm{cyc}})^\Gamma.$$
By Tate local duality (cf. \cite[Chapter I, Remark 3.6]{milne}), $H^1(F_v, A_{p^\infty})$ is dual to $A^*_{p^\infty}(F_v)$, which is finite by \cite[Proposition 3.2]{CX}. This completes the proof.
\end{proof}

We now present a corollary that relates the characteristic element to the $L$-function of $A$ over the intermediate subfields of $F_\infty/F$. For a finite extension $L$ of $F$ contained in $F_\infty$, we write $L_S(A/L,s)$ for the $L$-function of $A$ with Euler factors at the primes of $L$ lying above $S$ removed. The following is our corollary.

\begin{cor}\label{corbsd}
Let $A$ be an abelian variety defined over a global function field $F$ with $\mathrm{char}(F)\neq p$, and let $F_\infty/F$ be an admissible $p$-adic Lie extension.
Suppose that for a finite Galois extension $L$ of $F$ contained in $F_\infty$, the following conditions hold:
\begin{enumerate}
  \item The Tate--Shafarevich group $\Sha(A/L)$ is finite.
  \item Greenberg's semi-simplicity conjecture \ref{greenberg} holds for $A/L$.
  \item Condition $(\mathbf{G_L})$ holds.
\end{enumerate}
Then
\[ \ord_{T=0}(\Phi_{\mathrm{reg}_L}(\xi_A))= \ord_{s=1}L_S(A/L,s). \]
\end{cor}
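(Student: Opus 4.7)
The plan is to combine Theorem \ref{ordreg} with an input from the Birch--Swinnerton-Dyer conjecture over function fields and a local weight analysis of Euler factors. First I would observe that hypotheses $(2)$ and $(3)$ of the corollary are precisely what Theorem \ref{ordreg} requires in order to assert the equality case. Invoking it directly gives
\[ \ord_{T=0}(\Phi_{\mathrm{reg}_L}(\xi_A)) = \corank_{\BZ_p}(\Sel(A/L)). \]

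Next I would convert the Selmer corank into the Mordell--Weil rank using hypothesis $(1)$. Since $\Sha(A/L)(p)$ is finite, the Kummer descent exact sequence
\[ 0 \to A(L) \otimes_{\BZ} \BQ_p/\BZ_p \to \Sel(A/L) \to \Sha(A/L)(p) \to 0 \]
yields $\corank_{\BZ_p}(\Sel(A/L)) = \rank_{\BZ} A(L)$. Then I would appeal to the deep theorem of Kato and Trihan \cite{kato-trihan}, which, extending earlier work of Tate and Milne, establishes the Birch--Swinnerton-Dyer conjecture for abelian varieties over global function fields whenever $\Sha(A/L)(\ell)$ is finite for some single prime $\ell$. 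Since hypothesis $(1)$ is strictly stronger than this, we obtain $\rank_{\BZ} A(L) = \ord_{s=1} L(A/L, s)$.

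The last step, which I view as the main obstacle, is to verify that $\ord_{s=1} L(A/L,s) = \ord_{s=1} L_S(A/L,s)$, i.e., that none of the Euler factors being removed has a zero or pole at $s=1$. For each place $w$ of $L$ above $S$, the local factor is
\[ L_w(A, s)^{-1} = \det\!\big(1 - \Frob_w N(w)^{-s} \,\big|\, H^1_{\et}(A_{\bar L_w}, \BQ_\ell)^{I_w}\big) \]
for any auxiliary prime $\ell$ distinct from the residue characteristic at $w$. The concrete point to check is that $\Frob_w$ does not admit $N(w)$ as an eigenvalue on $H^1_{\et}(A_{\bar L_w}, \BQ_\ell)^{I_w}$. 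The plan is to pass to a finite Galois extension of $L_w$ where $A$ acquires semistable reduction, and then use Grothendieck's monodromy theorem together with the monodromy-weight filtration: the inertia-invariants coincide with the kernel of the monodromy operator $N$, which is the weight $\leq 1$ step of the monodromy filtration on $H^1_{\et}$. By the Weil conjectures, Frobenius eigenvalues on this piece are Weil numbers of weights in $\{0, 1\}$, hence of absolute value $1$ or $N(w)^{1/2}$, never $N(w)$. Consequently each removed local factor is holomorphic and non-vanishing at $s=1$, which together with the previous steps yields the desired equality.
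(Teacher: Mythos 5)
Your argument is correct and follows essentially the same route as the paper: Theorem \ref{ordreg} under hypotheses (2) and (3), the finiteness of $\Sha(A/L)$ to identify $\corank_{\BZ_p}\Sel(A/L)$ with $\rank_{\BZ}A(L)$, and the deep results of Tate, Milne and Kato--Trihan for the rank--order-of-vanishing equality. The only difference is that you spell out the passage from $L(A/L,s)$ to $L_S(A/L,s)$ via the weight bound on Frobenius eigenvalues in the removed Euler factors, a point the paper absorbs into its citation by stating the BSD input directly for $L_S(A/L,s)$.
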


\begin{proof}
Under the finiteness of $\Sha(A/L)$, we have \[\corank_{\BZ_p}(\Sel(A/L)) =\rank_{\mathbb{Z}}A(L).\]
Moreover, the deep results of Artin--Tate \cite{tate}, Milne \cite{milneAT}, and Kato--Trihan \cite{kato-trihan} imply that \[\rank_{\mathbb{Z}}A(L) =\ord_{s=1}L_S(A/L,s),\]
provided that
$\Sha(A/L)$ is finite.
The conclusion then follows from combining these equalities with Theorem \ref{ordreg}.
\end{proof}

It is now natural to ask whether similar results hold for non-regular Artin representations. To explore this, we introduce the following notation. Let $\rho: G\to GL_n(\mathcal{O})$ be an Artin representation of $G$ with coefficients in $\mathcal{O}$. Let $W_\rho$ be a free $\mathcal{O}$-module of rank $n$ realising $\rho$. Denote by $F_\rho$  the Galois extension of $F$ contained in $F_\infty$ such that $\Gal(F_\infty/F_\rho) = \ker \rho$. We  then write $G_\rho = \Gal(F_\infty/F_\rho)$.

\begin{prop}\label{prophypGrho} Let $A$ be an abelian variety defined over a global field of characteristic different from $p$. Suppose that condition $(\mathbf{G_\rho})$ holds. Then, for all $i\geq1$, we have
\[\ord_{T=0}\left(f_{H^{i}(H, \mathrm{tw}_\rho(A_{p^\infty})(F_\infty))}\right)=0.\]
\end{prop}

\begin{proof} Let $H_\rho = G_\rho \cap H$. Consider the spectral sequence
\[ H^r(H/H_\rho, H^s(H_\rho,\mathrm{tw}_\rho(A_{p^\infty})(F_\infty)) \Rightarrow H^{r+s}(H, \mathrm{tw}_\rho(A_{p^\infty})(F_\infty)).  \]
Since $H/H_\rho$ is finite, it suffices to show that the characteristic series of $H^i(H_\rho,\mathrm{tw}_\rho(A_{p^\infty})(F_\infty))$ does not contribute a factor of $T$. By the definition of $H_\rho$, this group acts trivially on $W_\rho$, and so we have
\begin{equation}\label{tensorcoh}
H^i(H_\rho,\mathrm{tw}_\rho(A_{p^\infty})(F_\infty)) = H^i(H_\rho, A_{p^\infty}(F_\infty))\otimes_{\mathcal{O}}W_\rho.
\end{equation}
Since Proposition \ref{prophypG} shows that the characteristic series of $H^i(H_\rho, A_{p^\infty}(F_\infty))$ does not contribute a factor of $T$, it follows from (\ref{tensorcoh}) that the same  holds for $H^i(H_\rho,\mathrm{tw}_\rho(A_{p^\infty})(F_\infty))$. \end{proof}

Given any infinite separable extension $K$ of $F$ contained in $F_S$, we define the twisted Selmer group $\Sel(\mathrm{tw}_\rho(A)/K)$ by
$$\Sel(\mathrm{tw}_{\rho}(A)/K)=\ker\left(H^1(G_S(K),\mathrm{tw}_\rho(A_{p^\infty}))\longrightarrow \bigoplus_{v\in S}J_v(\mathrm{tw}_\rho(A)/K)\right),$$
where for each $v\in S$,
$$J_v(\mathrm{tw}_\rho(A)/K):=\varinjlim_{L}\bigoplus_{w\mid v}H^1(L_w,\mathrm{tw}_\rho(A_{p^\infty})).$$
Here, the direct limit is taken over all finite extensions
$L$ of $F$ contained in $K$, and $w$ runs over all primes of $L$ lying above $v$. The transition maps in the limit are given by restriction.
We denote by $X(\mathrm{tw}_\rho(A)/K)$ the Pontryagin dual of $\Sel(\mathrm{tw}_\rho(A)/K)$.

\begin{lem} \label{twistsur}
We have the following short exact sequence of cofinitely generated $\Lambda_{\mathcal{O}}(H)$-modules
$$0 \longrightarrow \Sel(\mathrm{tw}_{\rho}(A)/F_\infty) \longrightarrow H^1(G_S(F_\infty),\mathrm{tw}_\rho(A_{p^\infty}))\longrightarrow \bigoplus_{v\in S}J_v(\mathrm{tw}_\rho(A)/F_\infty)\longrightarrow 0.$$
\end{lem}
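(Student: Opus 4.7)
The strategy is to reduce the assertion to Theorem \ref{thmsurj} by exploiting the fact that Artin twists become trivial once we pass to cohomology over $F_\infty$ and its completions.

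The key observation is as follows. Since $\rho$ factors through the quotient $G=\Gal(F_\infty/F)$ of $\Gal(F_S/F)$, the normal subgroup $G_S(F_\infty)=\Gal(F_S/F_\infty)$ lies in the kernel of $\Gal(F_S/F)\twoheadrightarrow G$, so it acts trivially on $W_\rho$. Similarly, for each place $w$ of $F_\infty$ above $v\in S$, the decomposition group $\Gal(\overline{F_{\infty,w}}/F_{\infty,w})$ sits in $G_S(F_\infty)$ (under a chosen embedding) and acts trivially on $W_\rho$. Consequently, regarded as $G_S(F_\infty)$- and as local Galois modules, one has canonical identifications
\[
\mathrm{tw}_\rho(A_{p^\infty})\;\cong\; A_{p^\infty}\otimes_{\BZ_p}\mathcal{O}^n
\]
with trivial action on the second factor (the full diagonal $G$-action persists, but only the $A_{p^\infty}$-factor is seen by $G_S(F_\infty)$).

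With this in hand, I would argue as follows. Since $\mathcal{O}^n$ is a free $\BZ_p$-module of finite rank, it is $\BZ_p$-flat; hence tensoring the short exact sequence of Theorem~\ref{thmsurj}
\[
0\to \Sel(A/F_\infty) \to H^1(G_S(F_\infty),A_{p^\infty}) \xrightarrow{\lambda_S(A/F_\infty)} \bigoplus_{v\in S}J_v(A/F_\infty)\to 0
\]
with $\mathcal{O}^n$ over $\BZ_p$ yields a short exact sequence whose middle and right-hand terms, by the above identifications and the fact that cohomology commutes with flat base change, are exactly $H^1(G_S(F_\infty),\mathrm{tw}_\rho(A_{p^\infty}))$ and $\bigoplus_{v\in S}J_v(\mathrm{tw}_\rho(A)/F_\infty)$. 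The kernel is then $\Sel(\mathrm{tw}_\rho(A)/F_\infty)$ by definition, giving the desired exact sequence (including the surjectivity at the right).

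It remains to check the $\Lambda_{\mathcal{O}}(H)$-cofinite generation of each term. By Lemma~\ref{corankglo}, $H^1(G_S(F_\infty),A_{p^\infty})$ is already cofinitely generated over $\Lambda(H)$, so $H^1(G_S(F_\infty),A_{p^\infty})\otimes_{\BZ_p}\mathcal{O}^n$ is cofinitely generated over $\Lambda(H)\otimes_{\BZ_p}\mathcal{O}=\Lambda_{\mathcal{O}}(H)$ once we compare module structures: the diagonal $H$-action differs from the action in which $H$ acts only on $H^1(G_S(F_\infty),A_{p^\infty})$ by a twist through $\rho|_H$, which factors through a finite quotient and hence is trivial on some open subgroup $H'\subseteq H$. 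Over $\Lambda_{\mathcal{O}}(H')$ the two actions coincide, and since $\Lambda_{\mathcal{O}}(H)$ is finitely generated as a $\Lambda_{\mathcal{O}}(H')$-module the cofinite generation descends to $\Lambda_{\mathcal{O}}(H)$. The left and right terms, being sub- and quotient modules, inherit this property.

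Overall, there is no real obstacle: the entire argument is essentially formal, provided one notices the triviality of $\rho$ on $G_S(F_\infty)$. The only mildly delicate point is keeping track of the twist in the $H$-action when passing to $\Lambda_{\mathcal{O}}(H)$-modules, which is handled via the open-subgroup argument above.
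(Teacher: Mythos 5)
Your proposal is correct and takes essentially the same route as the paper's proof: one observes that $G_S(F_\infty)$ (and hence every local Galois group over $F_\infty$) acts trivially on $W_\rho$, tensors the short exact sequence of Theorem \ref{thmsurj} with the free $\mathcal{O}$-module $W_\rho$, and deduces $\Lambda_{\mathcal{O}}(H)$-cofinite generation from Lemma \ref{corankglo}. Your open-subgroup argument handling the diagonal $H$-action on the twist is a detail the paper leaves implicit (it is the standard twisting fact), but it does not constitute a different approach.
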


\begin{proof}
By the definition of $\rho$, the group $G_S(F_\infty)$ acts trivially on $W_\rho$. Hence, we have $\Sel(\mathrm{tw}_{\rho}(A)/F_\infty) = \Sel(A/F_\infty)\otimes_{\mathcal{O}}W_\rho$.
Since $\Sel(A/F_\infty)$ is cofinitely generated over $\Lambda(H)$ by Lemma \ref{corankglo}, it follows that $\Sel(\mathrm{tw}_{\rho}(A)/F_\infty)$ is cofinitely generated over $\Lambda_{\mathcal{O}}(H)$. Similarly, we know that
\[H^1(G_S(F_\infty),\mathrm{tw}_\rho(A_{p^\infty}))\quad \textrm{and}\quad  \bigoplus_{v\in S}J_v(\mathrm{tw}_\rho(A)/F_\infty)\]
are
cofinitely generated over $\Lambda_{\mathcal{O}}(H)$. Since $W_\rho$ is a free $\mathcal{O}$-module, the short exact sequence of the lemma then follows from Theorem \ref{thmsurj} by tensoring the corresponding short exact sequence in that theorem with ~$W_\rho$.
\end{proof}

By an argument similar to that in \cite[Proposition 2.5]{coates-sujatha12}, the restriction map
\[ \Sel(\mathrm{tw}_{\rho}(A)/F^\cyc) \rightarrow \Sel(\mathrm{tw}_{\rho}(A)/F_\infty)^H \]
has kernel which is cofinitely generated over $\mathcal{O}$. It then follows from this fact and the preceding lemma that $\Sel(\mathrm{tw}_{\rho}(A)/F^\cyc)$ is cofinitely generated over $\mathcal{O}$. Therefore, we can apply the structure theory for $\Lambda_{\mathcal{O}}(\Gamma)$-modules to obtain the following pseudo-isomorphism

\begin{equation}\label{eqgr2}X(\mathrm{tw}_\rho(A)/F^{\mathrm{cyc}})\sim  \bigoplus_{j=1}^t \Lambda_{\mathcal{O}}(\Gamma)/(f_j^{\beta_j})
\end{equation}
where each $f_j$ is an irreducible distinguished polynomial.

The following is Greenberg's semi-simplicity conjecture for Artin twists over global function fields, which serves as the function field analogue to the conjecture in \cite[Conjecture 6.4]{Lim22}.

\begin{conj}[Greenberg's semi-simplicity conjecture for $\mathrm{tw}_\rho(A)/F$]\label{greenberg2} Let $A$ be an abelian variety defined over a global function field $F$ with ${\rm char}(F)\neq p$. Then the action of $\Gamma$ on $X(\mathrm{tw}_\rho(A)/F^{\mathrm{cyc}})$ is completely reducible, that is, $\beta_j=1$ for all $j$ in \eqref{eqgr2}.
\end{conj}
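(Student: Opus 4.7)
The plan is to exploit the geometric nature of the cyclotomic $\BZ_p$-extension $F^{\mathrm{cyc}}/F$. Since $p\neq \mathrm{char}(F)$, this extension is nothing but the $\BZ_p$-constant field extension of $k$, so $\Gamma = \Gal(F^{\mathrm{cyc}}/F)\cong \Gal(\bar k_p/k)$ is topologically generated by the $q$-power Frobenius of $k$. The condition $\beta_j=1$ for all $j$ in \eqref{eqgr2} is equivalent to the diagonalisability of a topological generator of $\Gamma$ acting on the finite-dimensional $\BQ_p$-vector space $X(\mathrm{tw}_\rho(A)/F^{\mathrm{cyc}})\otimes_{\BZ_p}\BQ_p$. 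The semi-simplicity for $\Gamma$ should therefore follow from the semi-simplicity of geometric Frobenius on an appropriate $p$-adic \'etale cohomology group attached to the smooth projective curve $\mathcal{C}/k$ with function field $F$.

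The strategy proceeds in three steps. First, translate the Iwasawa module into \'etale cohomology: associate to the Artin representation $\rho$ a lisse $p$-adic sheaf $\mathcal{F}_\rho$ on the open subcurve of $\mathcal{C}$ where $\rho$ is unramified, and combine it with the Tate-module sheaf $\mathcal{T}_p(\mathcal{A})$ of the N\'eron model $\mathcal{A}$ of $A$. Building on the foundational comparison results of Milne \cite{milneAT} and Kato--Trihan \cite{kato-trihan}, extended to accommodate coefficients in $\mathcal{F}_\rho$, one expects a natural map with pseudo-null kernel and cokernel between $X(\mathrm{tw}_\rho(A)/F^{\mathrm{cyc}})$ and the Pontryagin dual of a suitable piece of $H^1(\mathcal{C}_{\bar k_p}, \mathcal{T}_p(\mathcal{A})\otimes \mathcal{F}_\rho)$, with $\Gamma$ acting through geometric Frobenius. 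Second, invoke the semi-simplicity of Frobenius on $\ell$-adic cohomology of smooth projective varieties over finite fields---Deligne's theorem from Weil~II, or equivalently Tate's theorem for abelian varieties applied fibrewise---to conclude that geometric Frobenius acts semi-simply on this cohomology group after $\otimes_{\BZ_p}\BQ_p$. Third, transport this semi-simplicity back across the comparison to deduce $\beta_j=1$ for all $j$ via the structure theorem for $\Lambda_\mathcal{O}(\Gamma)$-modules.

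The hard part will be the first step, namely the precise identification of the Iwasawa module with a cohomology group. The Selmer group $\Sel(\mathrm{tw}_\rho(A)/F^{\mathrm{cyc}})$ is defined by local conditions at the primes in $S$, and one must verify that these conditions are compatible with the Frobenius-semisimple decomposition of the ambient cohomology; more delicately, any pseudo-null kernel or cokernel introduced by the comparison could, \emph{a priori}, create a jump in the multiplicity $\beta_j$, and one must rule this out. When $\rho$ is ramified, the sheaf $\mathcal{F}_\rho$ is not lisse on all of $\mathcal{C}$, which forces one to work with compactly supported or middle-extension cohomology and to treat the ramified primes of $\rho$ and the primes of bad reduction of $A$ with care, particularly when the two loci overlap. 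This obstacle---the comparison of local-condition-defined Selmer groups with full \'etale cohomology---is precisely what prevents the corresponding conjecture over number fields from being reduced to a direct geometric Frobenius-semi-simplicity statement, and here it stands as the principal bridge one needs to build.
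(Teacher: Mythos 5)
The statement you were asked about is not a theorem of the paper at all: it is stated as Conjecture \ref{greenberg2}, the function-field analogue of Greenberg's semi-simplicity conjecture, and the authors offer no proof — they only invoke it as a hypothesis in Theorem \ref{ordrho}. So there is no ``paper proof'' to match; the question is whether your sketch actually establishes the conjecture, and it does not.

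There are two genuine gaps. First, your Step 2 rests on a false attribution: Deligne's Weil II gives purity of the Frobenius eigenvalues (the Riemann hypothesis over finite fields), \emph{not} semi-simplicity of the Frobenius action. Semi-simplicity of Frobenius on $\ell$-adic (or $p$-adic, $p\neq\mathrm{char}$) cohomology of smooth projective varieties over finite fields is known for $H^1$ (Tate's theorem on abelian varieties, hence for Jacobians of curves), but is an open conjecture in higher degree. The group you actually need, $H^1(\mathcal{C}_{\bar k}, j_*T_pA\otimes\mathcal{F}_\rho)$, is a twisted piece of the $H^2$ of the total space of the abelian scheme over the curve, precisely the range where Frobenius semi-simplicity is not a theorem; indeed, by the Artin--Tate/Milne/Kato--Trihan circle of results the semi-simplicity at the eigenvalue $q$ (the eigenvalue governing $\ord_{T=0}$, i.e.\ the multiplicities $\beta_j$ in \eqref{eqgr2}) is bound up with the finiteness of $\Sha(A/F)$ and refined BSD, which is open in general. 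So the key input you ``invoke'' is itself (a form of) the conjecture to be proved. Second, your Step 1 — the identification, up to pseudo-isomorphism and compatibly with the $\Gamma$-action, of $X(\mathrm{tw}_\rho(A)/F^{\mathrm{cyc}})$ with the dual of such a cohomology group, together with control of the local conditions at $S$ and at the ramification locus of $\rho$, and the verification that the comparison error cannot inflate any $\beta_j$ — is exactly the bridge you acknowledge is unbuilt. As it stands the proposal is a plausible research strategy (conditional, say, on finiteness of $\Sha$ of the relevant twists, where semi-simplicity at $q$ can be extracted), but it is not a proof of Conjecture \ref{greenberg2}, and no unconditional proof should be expected from these inputs alone.
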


We are now in position to present the following result on the order of vanishing of characteristic elements evaluated at Artin representations.

\begin{thm}\label{ordrho}  Let $A$ be an abelian variety defined over a global function field $F$ with $\mathrm{char}(F)\neq p$, and let $F_\infty/F$ be an admissible $p$-adic Lie extension. Let $\rho$ be an irreducible Artin representation of $G=\Gal(F_\infty/F)$, and let $\xi_A$ denote a characteristic element of $X(A/F_\infty)$. Suppose that condition $(\mathbf{G_\rho})$ holds. Then
$$\ord_{T=0}(\Phi_{\rho}(\xi_A))\geq \corank_{\mathcal{O}}(\Sel(\mathrm{tw}_\rho(A)/F)).$$
Moreover, equality holds if Greenberg's semi-simplicity conjecture \ref{greenberg2} holds for $\mathrm{tw}_\rho (A)/F$.
\end{thm}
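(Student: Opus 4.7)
The plan is to mimic the proof of Theorem \ref{ordreg} with the twisted Selmer group in place of $\Sel(A/F_\infty)$. By Lemma \ref{evalxi},
\[
\Phi_\rho(\xi_A) \;=\; \mathrm{Ak}\bigl(\mathrm{tw}_{\hat\rho}(X(A/F_\infty))\bigr) \bmod \Lambda_{\mathcal{O}}(\Gamma)^\times.
\]
Using $\mathcal{O}$-linear Pontryagin duality together with the identification $\Sel(\mathrm{tw}_\rho(A)/F_\infty) = \Sel(A/F_\infty)\otimes_{\mathcal{O}} W_\rho$ from Lemma \ref{twistsur}, one sees that $\mathrm{tw}_{\hat\rho}(X(A/F_\infty))$ is isomorphic to $X(\mathrm{tw}_\rho(A)/F_\infty)$ as $\Lambda_{\mathcal{O}}(G)$-modules, so I reduce the problem to computing $\ord_{T=0}\mathrm{Ak}(\Sel(\mathrm{tw}_\rho(A)/F_\infty))$.

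The next step is to establish a twisted analogue of Theorem \ref{akashithm}, namely
\[
\mathrm{Ak}(\Sel(\mathrm{tw}_\rho(A)/F_\infty)) \;=\; f_{\Sel(\mathrm{tw}_\rho(A)/F^{\mathrm{cyc}})} \prod_{i\geq 1}\bigl(f_{H^i(H,\mathrm{tw}_\rho(A_{p^\infty})(F_\infty))}\bigr)^{(-1)^i} \prod_{v\in S'} f_{J_v(\mathrm{tw}_\rho(A)/F^{\mathrm{cyc}})}.
\]
This follows line by line from the proofs of Lemmas \ref{Akglobal} and \ref{AKlocal}, whose only inputs are the surjectivity of the localization map (now provided by Lemma \ref{twistsur}), the equality $\mathrm{cd}_p\, G_S(F^{\mathrm{cyc}}) = 1$ (Lemma \ref{cdpcyc}, unchanged), and the vanishing of $H^1(F_{\infty,w},\mathrm{tw}_\rho(A_{p^\infty}))$ for $w$ above $v\in S'$; the latter holds for the same reason as in Lemma \ref{AKlocal}, since the absence of nontrivial $p$-extensions of $F_{\infty,w}$ kills $H^1$ regardless of the coefficient module.

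It then remains to analyze the orders of vanishing at $T = 0$ factor by factor. Proposition \ref{prophypGrho} kills the $H$-cohomology terms under Hypothesis $(\mathbf{G}_\rho)$. For $v\in S'$, inflation--restriction along the finite extension $F_{\rho,v}/F_v$, combined with Tate local duality and Mattuck's theorem for $A^*(F_{\rho,v})$, shows that $H^1(F_v,\mathrm{tw}_\rho(A_{p^\infty}))$ is finite, and the argument at the end of the proof of Theorem \ref{ordreg} then forces $f_{J_v(\mathrm{tw}_\rho(A)/F^{\mathrm{cyc}})}$ to have trivial order at $T=0$. The heart of the proof is thus a twisted control theorem stating that the restriction map $\Sel(\mathrm{tw}_\rho(A)/F) \to \Sel(\mathrm{tw}_\rho(A)/F^{\mathrm{cyc}})^\Gamma$ has finite kernel and cokernel. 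I would prove this by transcribing Theorem \ref{controlthm}, using that $\mathrm{tw}_\rho(A_{p^\infty})(F^{\mathrm{cyc}})^\vee$ is finitely generated over $\mathcal{O}$ (since $\mathrm{tw}_\rho(A_{p^\infty})$ is $\mathcal{O}$-cofinitely generated) together with the local finiteness just established. Once this is in hand, applying the structure theorem for finitely generated $\Lambda_{\mathcal{O}}(\Gamma)$-modules to $X(\mathrm{tw}_\rho(A)/F^{\mathrm{cyc}})$ and repeating the proof of Proposition \ref{controlprop} verbatim yields the lower bound $\ord_{T=0}(f_{\Sel(\mathrm{tw}_\rho(A)/F^{\mathrm{cyc}})}) \geq \corank_{\mathcal{O}}(\Sel(\mathrm{tw}_\rho(A)/F))$, with equality precisely under Conjecture \ref{greenberg2}. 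The main obstacle is carrying out this twisted control theorem uniformly in the layers; this should be routine but requires careful bookkeeping of the $\mathcal{O}$-structure in the local $H^1$ contributions, especially at primes where the decomposition behavior in $F_\rho/F$ is nontrivial.
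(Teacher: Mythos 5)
Your proposal is correct and follows essentially the same route as the paper, whose proof of Theorem \ref{ordrho} is precisely "the arguments of Theorem \ref{ordreg} carry over", invoking Lemma \ref{twistsur}, Proposition \ref{prophypGrho}, the identification $X(\mathrm{tw}_\rho(A)/F_\infty)=\mathrm{tw}_{\hat\rho}X(A/F_\infty)$, and the (implicit) twisted versions of the Akashi and control computations that you spell out. One small imprecision: Lemma \ref{twistsur} only gives surjectivity of the twisted localisation map over $F_\infty$ (where $G_S(F_\infty)$ acts trivially on $W_\rho$), so the factorisation of $f_{H^1(G_S(F^{\mathrm{cyc}}),\mathrm{tw}_\rho(A_{p^\infty}))}$ into Selmer and local terms needs the twisted analogue of Theorem \ref{thmsurj} over $F^{\mathrm{cyc}}$, which does not follow by tensoring with $W_\rho$ but does follow by rerunning the Poitou--Tate/Jannsen argument with twisted coefficients.
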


\begin{proof}
The arguments used in the proof of Theorem \ref{ordreg} carry over to the setting of the Artin twists, using Proposition \ref{prophypGrho} and Lemma \ref{twistsur}. In the proof, we also make use of the identity
$X(\mathrm{tw}_\rho(A)/F_\infty) = \mathrm{tw}_{\hat{\rho}}X(A/F_\infty)$, as shown in \cite[Lemma 3.4]{c-fks}. \end{proof}

\section{The Generalised Euler characteristics}\label{section8}

In this section, we introduce the definition of the generalised Euler characteristic. The idea of defining a finite Euler characteristic $\chi(G,Y)$ for a discrete $p$-primary $G$-module $Y$, even when the cohomology groups $H^i(G,Y)$ are infinite, was first proposed by Coates--Schneider--Sujatha in the form of the truncated Euler characteristic \cite{coates-sujatha-schneider}.  Our presentation here follows that of Zerbes \cite{zerbes2}.

\medskip

Let $W$ be a discrete $p$-primary $\Gamma$-module. Then we have
\[H^0(\Gamma,W)=W^\Gamma\quad H^1(\Gamma,W)=W_\Gamma,\]
where $W_\Gamma$  is the maximal quotient of $W$ on which $\Gamma$ acts trivially.
Hence, we have a map
\[\phi_W: H^0(\Gamma,W)\to H^1(\Gamma,W),\quad f\mapsto \textrm{ residue class of } f.\]
Now, let $Y$ be a discrete $p$-primary $G$-module, we define
\[d^0: H^0(G,Y)=H^0(\Gamma,Y^H)\longrightarrow H^1(\Gamma,Y^H)\hookrightarrow H^1(G,Y),\]
where the middle map is $\phi_{Y^H}$, and the last map is the inflation map. Similarly, for each $i\geq1$, define
\[d^i: H^i(G,Y)\twoheadrightarrow H^0(\Gamma, H^i(H,Y))\longrightarrow H^1(\Gamma,H^i(H,Y))\hookrightarrow H^{i+1}(G,Y),\]
where the first map is the restriction map, the middle map is $\phi_{H^i(H,Y)}$. Note that the restriction map is surjective since
${\rm cd}_p(\Gamma)=1$. We define $d^{-1}$ to be the zero map.

\bigskip

All the properties for the morphisms involved in the definition of the complex $(H^i(G,Y),d^i)$ will become clear in the proof of the following proposition.

\medskip

\begin{prop}\label{3-1}
The sequence $(H^\bullet(G,Y),d^\bullet)$ forms a cochain complex.
\end{prop}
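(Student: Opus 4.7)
The plan is to apply the Hochschild--Serre spectral sequence
\[ E_2^{i,j} = H^i(\Gamma, H^j(H, Y)) \Rightarrow H^{i+j}(G, Y). \]
Because $\mathrm{cd}_p(\Gamma) = 1$, we have $E_2^{i,j} = 0$ for $i \geq 2$, so the spectral sequence degenerates at $E_2$ into short exact sequences
\[ 0 \longrightarrow H^1(\Gamma, H^{i-1}(H, Y)) \xrightarrow{\mathrm{inf}} H^i(G, Y) \xrightarrow{\mathrm{res}} H^0(\Gamma, H^i(H, Y)) \longrightarrow 0 \]
for every $i \geq 1$ (with the convention that $H^{-1}$ is zero). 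These exact sequences immediately validate the building blocks of the definition of $d^i$: the restriction map is surjective, and the inflation map is injective, so $d^i$ is a well-defined homomorphism from $H^i(G,Y)$ to $H^{i+1}(G,Y)$.

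The key observation for showing $d^{i+1} \circ d^i = 0$ is that the image of $d^i$ lies, by construction, inside the image of the inflation map $H^1(\Gamma, H^i(H, Y)) \hookrightarrow H^{i+1}(G, Y)$. By the exact sequence above applied to $i+1$, this image equals the kernel of the restriction map $H^{i+1}(G, Y) \twoheadrightarrow H^0(\Gamma, H^{i+1}(H, Y))$. Since the very first stage of $d^{i+1}$ is precisely this restriction map, we obtain $d^{i+1}(d^i(x)) = 0$ for every $x \in H^i(G, Y)$. The case $i = -1$ is trivial as $d^{-1} = 0$, and the case $i = 0$ works identically once one observes that $H^0(G, Y) = H^0(\Gamma, Y^H) = H^0(\Gamma, H^0(H, Y))$, so $d^0$ factors through $H^1(\Gamma, H^0(H, Y))$ via inflation, whose image again lies in the kernel of the restriction map that opens $d^1$.

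There is no real obstacle here: the statement is a formal consequence of the exactness of the five-term Hochschild--Serre sequence together with the vanishing of $\mathrm{cd}_p(\Gamma)$ beyond degree one. The only point worth emphasising in the write-up is the uniformity between the $i = 0$ definition (which omits an explicit initial restriction step because $H^0(G, Y) = H^0(\Gamma, Y^H)$ already) and the $i \geq 1$ definition, so that the argument ``image of inflation $=$ kernel of restriction'' applies seamlessly in all degrees.
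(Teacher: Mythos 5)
Your proposal is correct and follows essentially the same route as the paper: degeneration of the Hochschild--Serre spectral sequence via $\mathrm{cd}_p(\Gamma)=1$ into the short exact sequences $0\to H^1(\Gamma,H^{i-1}(H,Y))\to H^i(G,Y)\to H^0(\Gamma,H^i(H,Y))\to 0$, and then the observation that the image of $d^i$ lies in the image of inflation, which by exactness is the kernel of the restriction map opening $d^{i+1}$. Your explicit remark about the uniformity of the $i=0$ case with the $i\geq 1$ cases is a nice touch but does not change the argument.
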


\begin{proof}
We consider the Hochschild--Serre spectral sequence
\[H^r(\Gamma,H^s(H,Y))\Rightarrow H^{r+s}(G,Y).\]
Since ${\rm cd}_p(\Gamma)=1$, it follows that $H^r(\Gamma,H^s(H,Y))=0$ unless $r=0$ or $1$.
Thus, for all $s\geq1$,  we obtain the short exact sequence
\begin{equation}\label{3-f}
 0\to H^1(\Gamma,H^{s-1}(H,Y)) \to H^s(G,Y)\to H^0(\Gamma,H^s(H,Y)) \to 0.
\end{equation}

Now, observe that the differential $d^s: H^s(G,Y)\to H^{s+1}(G,Y)$ is given by the composition
\[H^s(G,Y)\twoheadrightarrow H^0(\Gamma,H^s(H,Y)) \stackrel{\phi_{H^s(H,X)}}{\longrightarrow}H^1(\Gamma,H^{s}(H,Y))\hookrightarrow H^{s+1}(G,Y),\]
where the surjection and injection come directly from the short exact sequence \eqref{3-f}.
Therefore, $d^{s+1}\circ d^s$ factors through
\[H^1(\Gamma,H^{s}(H,Y))\to H^{s+1}(G,Y) \to H^0(\Gamma,H^{s+1}(H,Y)),\]
which,  by exactness of \eqref{3-f}, is the zero map. Hence, we conclude that $d^{s+1}\circ d^s=0$, as required.
\end{proof}

\medskip

\begin{defn}\label{3-2} Let $\fH^\bullet$ denote the cohomology of the complex constructed in Proposition \ref{3-1}. If $\fH^i$ is finite for all $i\geq0$, we say that the $G$-module $Y$ has a finite generalised $G$-Euler characteristic $\chi(G,Y)$, defined by
\[\chi(G,Y)=\prod_{i\geq0}\#(\fH^i)^{(-1)^i}.\]
\end{defn}

\begin{lem}\label{3-3} The generalised $G$-Euler characteristic satisfies the following properties:
\noindent
\begin{enumerate}
  \item [(1)]If the groups $H^i(G,Y)$ are finite for all $i$, then the generalised $G$-Euler characteristic of $Y$ agrees with the usual $G$-Euler characteristic.
  \item [(2)]Suppose $H^0(G,Y)$ and $H^1(G,Y)$ are infinite, and $H^i(G,Y)$ is finite for all $i\geq2$. Then $Y$ has a finite generalised $G$-Euler characteristic if and only if the map $d^0$ has a finite kernel and cokernel, in which  case  \begin{equation}\label{3-f-1}
    \chi(G,Y)=\frac{\#(\ker(d^0))}{\#(\coker(d^0))}\times\prod_{i\geq2}\#\left(H^i(G,Y)\right)^{(-1)^i}.
  \end{equation}
\end{enumerate}
\end{lem}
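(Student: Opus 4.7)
The plan is to analyze the cochain complex $C^\bullet := (H^\bullet(G,Y), d^\bullet)$ constructed in Proposition \ref{3-1} directly. First note that since $G$ is a compact $p$-adic Lie group without $p$-torsion, it has finite $p$-cohomological dimension, so $H^i(G,Y) = 0$ for all sufficiently large $i$. Thus all products in the statement are effectively finite, and there are no convergence issues.

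For part (1), under the finiteness hypothesis, $C^\bullet$ is a bounded complex of finite abelian groups. The elementary identity
\[\prod_i \#\fH^i(C^\bullet)^{(-1)^i} = \prod_i \#C^i{}^{(-1)^i},\]
proved by a straightforward induction on the length of the complex using the short exact sequences $0 \to Z^n \to C^n \to B^{n+1} \to 0$ (where $Z^n$, $B^{n+1}$ denote cocycles and coboundaries), yields $\chi(G,Y) = \prod_i \#H^i(G,Y)^{(-1)^i}$, which is by definition the usual $G$-Euler characteristic.

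For part (2), the first step is to determine when each $\fH^i$ is finite. Clearly $\fH^0 = \ker(d^0) \subseteq H^0(G,Y)$, and for $i \geq 1$ there is a short exact sequence
\[0 \to \fH^i \to H^i(G,Y)/\image(d^{i-1}) \to \image(d^i) \to 0.\]
Specialising to $i=1$, one reads $H^1(G,Y)/\image(d^0) = \coker(d^0)$, and $\image(d^1) \subseteq H^2(G,Y)$ is finite by hypothesis; hence $\fH^1$ is finite if and only if $\coker(d^0)$ is. For $i \geq 2$, the module $\fH^i$ is a subquotient of the finite group $H^i(G,Y)$ and is therefore automatically finite. Consequently $\chi(G,Y)$ is defined precisely when $\ker(d^0)$ and $\coker(d^0)$ are both finite.

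Assuming this finiteness, set $a_i := \#\image(d^i)$ for $i \geq 1$. The exact sequences above, combined with $0 \to \ker(d^i) \to H^i(G,Y) \to \image(d^i) \to 0$ for $i \geq 2$, give $\#\fH^1 = \#\coker(d^0)/a_1$ and $\#\fH^i = \#H^i(G,Y)/(a_i \, a_{i-1})$ for $i \geq 2$. Forming the alternating product, every $a_i$ with $i \geq 2$ appears twice with opposite signs (once from $\fH^i^{(-1)^i}$ and once from $\fH^{i+1}{}^{(-1)^{i+1}}$), and $a_1$ appears once from $\fH^1{}^{-1}$ and once from $\fH^2$, again with opposite signs; every $a_i$ therefore cancels, and what remains is precisely the right-hand side of \eqref{3-f-1}. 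The only real obstacle is bookkeeping: one must isolate the anomalous degrees $0$ and $1$, where the complex sees the extra data $\ker d^0$ and $\coker d^0$, from the uniform behaviour in degrees $\geq 2$, and verify carefully that the $a_i$ telescope as claimed.
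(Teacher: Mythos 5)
Your proof is correct and follows essentially the same elementary route as the paper: both parts rest on the telescoping identity for bounded complexes of finite groups, together with exact sequences relating $\fH^0$ and $\fH^1$ to $\ker(d^0)$, $\coker(d^0)$ and the finite groups $H^i(G,Y)$, $i\geq 2$. The only cosmetic difference is that you telescope the orders $\#\,\mathrm{im}(d^i)$ directly, whereas the paper obtains \eqref{3-f-1} by applying part (1) to the truncated complex $\coker(d^0)\to H^2(G,Y)\to\cdots$.
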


\begin{proof}
This result is essentially contained in the work of Zerbes \cite{zerbes2}, though not explicitly stated in this form. For the reader's convenience, we provide a full proof.
For part (1), it suffices to show that for a cochain complex $(\fX^\bullet ,\delta^\bullet)$ with each $\fX^i$ finite for  $i\geq 0$, we have
\[\prod^\infty_{i=0}\#(H^i(\fX^\cdot))^{(-1)^i}=\prod^{\infty}_{i=0}\#(\fX^i)^{(-1)^i}.\]
To verify this, consider the following two short exact sequences
\[0\to Z^i\to \fX^i \to B^{i+1}\to0,\]
\[0\to B^i\to Z^i \to H^i(\fX^\cdot)\to 0.\]
Since all modules involved are finite, we have
\[\#(\fX^i)=\#(Z^i)\cdot\#(B^{i+1}).\]
Because $\#(B^0)=1$, it follows that
\[\prod_{i\geq0}\#(\fX^i)^{(-1)^i}=\prod_{i\geq0}(\#(Z^i)
\cdot\#(B^{i+1}))^{(-1)^i}=
\prod_{i\geq0}\#(H^i(\fX^\cdot))^{(-1)^i}.\]
This proves  part (1). For part (2), first note that $\ker(d^0)=\fH^0$. Thus, if $d^0$ has a finite kernel and cokernel, then $\fH^0$ is finite. Consider
the exact sequence
\[0\to \fH^1 \to \coker d^0 \to H^2(G,Y), \]
where the third map is induced by $d^1$. Since $\coker d^0$ and $H^2(G,Y)$ are finite, it follows that $\fH^1$ is also finite. Given that $\fH^i$ is finite for all $i\geq2$ by assumption, thus $Y$ has a finite generalised $G$-Euler characteristic. Conversely, suppose that $Y$ has a finite generalised $G$-Euler characteristic. Since $\ker(d^0)=\fH^0$ and $\fH^0$ is finite, $d^0$ must have finite kernel. Because $H^2(G,Y)$ is finite, so is $\ker(d^2)$. Now consider the exact sequence
\[0\to \fH^1 \to \coker d^0 \to \ker d^2 \to \fH^2 \to 0.\]
It follows that $\coker (d^0)$ is finite as well. To establish the formula \eqref{3-f-1}, note that the complex $(H^\bullet(G,Y),d^\bullet)$ induces the following cochain complex
\[0\to\coker(d^0)\stackrel{\bar{d}^1}{\longrightarrow} H^2(G,Y)\stackrel{d^2}{\longrightarrow}\cdots.\]
All terms in this complex are finite, and $\ker(\bar{d}^1)\simeq\fH_1$. By part (1), we obtain \begin{equation}\label{3-f-2}
\prod^\infty_{i=1}\#(\fH^i)^{(-1)^{i+1}}=\#(\coker (d^0))\times \prod^\infty_{i=2}\#\left(H^i(G,Y)\right)^{(-1)^{i+1}}.
\end{equation}
Since $\#(\fH^0)=\#(\ker(d^0))$, dividing both sides of  \eqref{3-f-2} by $\#(\fH^0)$ gives
\[\chi(G,Y)=\frac{\#(\ker(d^0))}{\#(\coker(d^0))}\times\prod_{i\geq2}\#\left(H^i(G,Y)\right)^{(-1)^i},\]
which is exactly the expression in \eqref{3-f-1}.
\end{proof}

\bigskip

Let $| \cdot |_p$ denote the $p$-adic valuation of $\overline{\BQ}$, normalised so that $|p|_p = p^{-1}$.
We now record the following important result, which relates the generalised Euler characteristic to the Akashi series.

\begin{prop}\label{3-5}
Let $Y$ be a discrete $p$-primary $G$-module such that $Y^\vee\in\fM_H(G)$. If $Y$ has a finite generalised $G$-Euler characteristic, then the leading term of its Akashi series $\mathrm{Ak}(Y)$ is given by $\alpha_Y T^k$, where $k$ is  the alternating sum
\[k=\sum_{i\geq0}(-1)^i {\rm corank}_{\BZ_p}H^i(H,Y)^\Gamma\]
and
\[\chi(G,Y)=|\alpha_Y|^{-1}_p.\]
\end{prop}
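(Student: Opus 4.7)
The plan is to reduce the statement to the one-dimensional case (i.e.\ with $G$ replaced by $\Gamma$) applied to each $H^i(H,Y)$ separately; the bridge is the Hochschild--Serre spectral sequence. Since $\mathrm{cd}_p(\Gamma)=1$, this spectral sequence collapses to the short exact sequence \eqref{3-f}, and as observed in the construction of $d^\bullet$, the differential $d^i$ factors as the surjection onto $H^i(H,Y)^\Gamma$, the residue-class map $\phi_{H^i(H,Y)}$, and the inclusion into $H^{i+1}(G,Y)$.

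First, I would use this factorisation to describe $\fH^i$ explicitly. Writing $M_i=H^i(H,Y)$ for brevity, a short diagram chase on the filtration
\[0\to (M_{i-1})_\Gamma \to H^i(G,Y)\to M_i^\Gamma\to 0\]
together with the factorisation of $d^i$ yields a short exact sequence
\[0\to \coker \phi_{M_{i-1}}\to \fH^i\to \ker \phi_{M_i}\to 0\]
for every $i\geq 0$ (with the convention $\phi_{M_{-1}}=0$). It follows that $Y$ has finite generalised $G$-Euler characteristic if and only if every $M_i$ has finite generalised $\Gamma$-Euler characteristic; furthermore, multiplying orders along these exact sequences so that the cokernel terms telescope gives the key identity
\[\chi(G,Y)=\prod_{i\geq 0}\chi(\Gamma, M_i)^{(-1)^i}.\]

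Second, I would invoke the one-dimensional case, essentially due to Zerbes \cite{zerbes2}: for a finitely generated torsion $\Lambda(\Gamma)$-module $N$ whose Pontryagin dual $N^\vee$ has finite generalised $\Gamma$-Euler characteristic, the characteristic power series $f_N(T)$ can be written, modulo units in $\Lambda(\Gamma)$, in the form
\[f_N(T)=\alpha_N T^{k_N}+O(T^{k_N+1}),\qquad k_N=\corank_{\BZ_p}(N^\vee)^\Gamma,\]
and $\chi(\Gamma,N^\vee)=|\alpha_N|_p^{-1}$. The main content is that the finiteness of the kernel and cokernel of $\phi_{N^\vee}$ forces, via the structure theorem for $\Lambda(\Gamma)$-modules, every $T$-elementary summand of $N$ to be isomorphic to $\Lambda(\Gamma)/T$; the coefficient $\alpha_N$ is then read off from the Weierstrass preparation of the surviving factor of $f_N$, and a summand-by-summand computation matches $|\alpha_N|_p^{-1}$ with $\chi(\Gamma,N^\vee)$.

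Finally, writing $f_i$ for the characteristic series of $M_i^\vee$ with leading term $\alpha_i T^{k_i}$, the product $\mathrm{Ak}(Y)=\prod_i f_i^{(-1)^i}$ from \eqref{3-f-3} has leading term $\alpha_Y T^k$ with
\[\alpha_Y=\prod_i \alpha_i^{(-1)^i},\qquad k=\sum_i (-1)^i k_i=\sum_i (-1)^i \corank_{\BZ_p} H^i(H,Y)^\Gamma.\]
Comparing $p$-adic absolute values with the multiplicativity established in the first step gives
\[|\alpha_Y|_p^{-1}=\prod_i \chi(\Gamma,M_i)^{(-1)^i}=\chi(G,Y),\]
which is the desired formula. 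The main obstacle will be the short exact sequence for $\fH^i$: it requires careful tracking of how the filtration on $H^i(G,Y)$ interacts with the factorisation of $d^i$, in particular identifying $\mathrm{Im}(d^{i-1})$ with $\mathrm{Im}(\phi_{M_{i-1}})$ inside $(M_{i-1})_\Gamma\subset H^i(G,Y)$. Once this identification is in place, the remaining steps are routine manipulations of characteristic series together with the classical one-dimensional statement.
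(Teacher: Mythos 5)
Your proposal is correct, but it is worth pointing out that the paper does not actually prove Proposition \ref{3-5}: its "proof" is the single citation to \cite[Proposition 2.10]{zerbes2}. What you have written is essentially a reconstruction of the argument behind that citation, and the reduction steps are sound: since $d^{i}$ factors as the surjection $\pi_i$ onto $H^i(H,Y)^\Gamma$ followed by $\phi_{H^i(H,Y)}$ and the inclusion coming from \eqref{3-f}, one indeed has $\mathrm{Im}(d^{i-1})=\mathrm{Im}(\phi_{M_{i-1}})$ inside $(M_{i-1})_\Gamma$ and $\ker d^i=\pi_i^{-1}(\ker\phi_{M_i})$, which gives your short exact sequence $0\to\coker\phi_{M_{i-1}}\to\fH^i\to\ker\phi_{M_i}\to 0$; the telescoping product then yields $\chi(G,Y)=\prod_{i\ge 0}\chi(\Gamma,M_i)^{(-1)^i}$ (all products being finite because $H^i(H,Y)=0$ for $i>\dim H$), and comparison with the definition \eqref{3-f-3} of the Akashi series reduces everything to the one-dimensional statement, with $k_i=\corank_{\BZ_p}H^i(H,Y)^\Gamma$ coming out correctly since $((M_i^\vee)^\vee)^\Gamma=M_i^\Gamma$. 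The one place where your sketch is thinner than it should be is precisely that one-dimensional lemma: $N=H^i(H,Y)^\vee$ is only \emph{pseudo-isomorphic} to an elementary module, so you need to check (a) that finiteness of $\ker\phi_{N^\vee}$ and $\coker\phi_{N^\vee}$ transfers across the pseudo-isomorphism, so that the $T$-primary part of the elementary module is forced to be a direct sum of copies of $\Lambda(\Gamma)/T$, and (b) that the ratio $\#\ker\phi_{N^\vee}/\#\coker\phi_{N^\vee}$ is insensitive to the finite kernel and cokernel of the pseudo-isomorphism, which rests on the fact that a finite $\Lambda(\Gamma)$-module has $T$-kernel and $T$-cokernel of the same order; this is exactly the content of the cyclotomic-level results of Zerbes (and of Coates--Schneider--Sujatha \cite{coates-sujatha-schneider}). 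In short, the paper's approach buys brevity by outsourcing the whole statement, while yours buys transparency: it isolates the Hochschild--Serre bookkeeping, which is formal, from the single genuinely Iwasawa-theoretic input at the level of $\Gamma$, and once that lemma is either proved carefully or cited, your argument is complete.
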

\begin{proof}
See \cite[Proposition 2.10]{zerbes2}.
\end{proof}

We remark that, in general, the generalised $G$-Euler characteristic is not multiplicative in short exact sequences (see \cite[the remark following Lemma 2.13]{zerbes2}). However, the following lemma suffices for most of our purposes.

\begin{lem}\label{fingec}
Let $$0\to B_1 \to B_2 \to B_3 \to B_4 \to 0$$
be an exact sequence of discrete $G$-modules such that $B_1$ is finite and $B_4$ has a finite $G$-Euler characteristic. Then $B_2$ has a finite generalised $G$-Euler characteristic if and only if $B_3$ does.
\end{lem}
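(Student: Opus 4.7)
The plan is to reduce the four-term sequence to two short exact sequences and transport finiteness through an induced morphism of the complexes $(H^\bullet(G, -), d^\bullet)$ of Proposition~\ref{3-1}. First, I would set $C = \ker(B_3 \to B_4) = \Im(B_2 \to B_3)$, which is a discrete $p$-primary $G$-module, and split the given four-term sequence into
\[ 0 \to B_1 \to B_2 \to C \to 0, \qquad 0 \to C \to B_3 \to B_4 \to 0. \]
Since by hypothesis $B_1$ and $B_4$ have finite $G$-Euler characteristic, every cohomology group $H^i(G, B_1)$ and $H^i(G, B_4)$ is finite. The long exact sequence in $G$-cohomology associated with each of these two short exact sequences then shows that the maps $H^i(G, B_2) \to H^i(G, C)$ and $H^i(G, C) \to H^i(G, B_3)$ both have finite kernel and finite cokernel for every $i \geq 0$. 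Composing them, the map $\psi^i : H^i(G, B_2) \to H^i(G, B_3)$ induced by $B_2 \twoheadrightarrow C \hookrightarrow B_3$ has finite kernel and cokernel for every $i \geq 0$.

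The crucial observation is that the differentials $d^i$ are natural in the module variable, being built from the Hochschild--Serre spectral sequence and the residue maps $\phi_W$, both of which are functorial. Hence $\psi^\bullet$ is a morphism of complexes from $(H^\bullet(G, B_2), d^\bullet)$ to $(H^\bullet(G, B_3), d^\bullet)$. Its kernels $K^i := \ker \psi^i$ assemble into a subcomplex $K^\bullet \subseteq H^\bullet(G, B_2)$, and its cokernels $Q^i := \coker \psi^i$ into a quotient complex of $H^\bullet(G, B_3)$, yielding two short exact sequences of complexes
\[ 0 \to K^\bullet \to H^\bullet(G, B_2) \to \Im(\psi^\bullet) \to 0, \qquad 0 \to \Im(\psi^\bullet) \to H^\bullet(G, B_3) \to Q^\bullet \to 0, \]
in which every term of $K^\bullet$ and $Q^\bullet$ is a finite abelian group.

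The long exact sequences in cohomology associated with these two short exact sequences of complexes, combined with the termwise finiteness of $K^\bullet$ and $Q^\bullet$, then show that the cohomology groups $\fH^i$ of each of $H^\bullet(G, B_2)$, $\Im(\psi^\bullet)$, and $H^\bullet(G, B_3)$ are simultaneously finite or infinite at every $i$. Consequently, $B_2$ has finite generalised $G$-Euler characteristic if and only if $B_3$ does. The one delicate point is the naturality of the $d^i$ in the module variable; this follows directly from the naturality of the Hochschild--Serre spectral sequence and of the residue maps $\phi_W$, but should be spelled out carefully to guarantee that $\psi^\bullet$ really is a morphism of complexes before running the homological algebra above.
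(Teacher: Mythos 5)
Your proposal is correct and follows essentially the same route as the paper: split the four-term sequence at $C=\ker(B_3\to B_4)$, use the finiteness of $H^i(G,B_1)$ and $H^i(G,B_4)$ to get comparison maps on $G$-cohomology with finite kernels and cokernels, observe via naturality of the Hochschild--Serre spectral sequence that these are cochain maps for the complexes $(H^\bullet(G,-),d^\bullet)$, and conclude by the kernel/image/cokernel long-exact-sequence argument. The only (harmless) difference is that you compose the two comparison maps into a single $\psi^\bullet: H^\bullet(G,B_2)\to H^\bullet(G,B_3)$ and spell out the homological algebra that the paper leaves as ``a direct homological algebraic argument,'' whereas the paper treats the two short exact sequences separately.
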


\begin{proof}
Set $C=\ker(B_3\to B_4)$. By analysing the following two short exact sequences separately:
\[0\to B_1 \to B_2 \to C \to 0,\]
and
\[0\to C \to B_3 \to B_4\to 0,\]
we may reduce to the case where either $B_1=0$ or $B_4=0$. We  prove the lemma in the case $B_1=0$; the case of $B_4=0$ is analogous. Assume, therefore, that we have a short exact sequence of $G$-modules
\[0\to B_2\to B_3\to B_4\to 0,\]
with $B_4$ having finite $G$-Euler characteristics. In particular, $H^i(G,B_4)$ is finite for all $i\geq0$. Consider the long exact sequence in $G$-cohomology arising from the short exact sequence above. It yields morphisms
\[\psi_i:H^i(G,B_2)\to H^i(G, B_3)\]
with finite kernel and cokernel for all $i\geq0$.
By the naturality of the Hochschild--Serre spectral sequence and the definition of $d^i$, the maps $\psi_i$ are cochain maps between the complexes $(H^\bullet(G,B_2),d^\bullet_{B_2})$ and $(H^\bullet(G,B_3),d^\bullet_{B_3})$. Since the kernel and cokernel of these cochain maps are finite, standard homological algebra argument implies that the induced maps on the cohomology of these complexes also have a finite kernel and cokernel.
Therefore, $B_2$ has a finite generalised Euler characteristic if and only if $B_3$ does.
\end{proof}

\begin{remark} One can even show that \[\chi(G,B_2)\chi(G, B_4)=\chi(G,B_1)\chi(G,B_3)\] in the preceding lemma. As this latter calculation is not required for the paper, we do not provide a detailed proof, leaving it to the interested readers.
\end{remark}

\section{Computations of the Generalised Euler Characteristics}\label{section9}

To compute the generalised Euler characteristics, we impose the following assumption $(\mathbf{H})$ on the abelian variety $A$ defined over a function field $F$, which we now recall:

\[
\mathbf{(H)}: \quad H^i(H, A_{p^\infty}(F_\infty)) \text{ is finite for all } i \geq 0.
\]

\begin{thm}\label{GECthm}
Let $A$ be an abelian variety defined over a global field $F$ with $\mathrm{char} F\neq p$, and let $F_\infty$ be an admissible $p$-adic Lie extension of $F$. Suppose that assumption $(\mathbf{H})$ holds. Then $\Sel(A/F_\infty)$ has a finite generalised $G$-Euler characteristic if and only if $\Sel(A/F^{\rm cyc})$ has a finite generalised $\Gamma$-Euler characteristic.  In that case, we have the following equality
\[\chi(G, \Sel(A/F_\infty)) = \chi(\Gamma, \Sel(A/F^\cyc)) \prod_{v\in S'}\frac{\#  A^*_{p^\infty}(F_v)}{\# H^1(\Gamma_w, A_{p^\infty}(F^\cyc_w))},\]
where, for each $v\in S'$, $w$ is a fixed prime of $F^\cyc$ lying above $v$, and $\Gamma_w$ denotes the decomposition subgroup of $\Gamma$ at $w$.
\end{thm}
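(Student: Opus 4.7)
The plan is to compare both generalised Euler characteristics via the factorisation of the Akashi series in Theorem~\ref{akashithm}, interpreted through Proposition~\ref{3-5}. Under Hypothesis $(\mathbf{H})$, each $H^i(H, A_{p^\infty}(F_\infty))$ is finite for $i \geq 1$, hence pseudo-null as a $\Lambda(\Gamma)$-module, so its characteristic series is a unit in $\Lambda(\Gamma)$. Consequently, Theorem~\ref{akashithm} collapses modulo $\Lambda(\Gamma)^\times$ to
\[
\mathrm{Ak}(\Sel(A/F_\infty)) = f_{\Sel(A/F^\cyc)} \cdot \prod_{v \in S'} f_{J_v(A/F^\cyc)}.
\]
By Proposition~\ref{3-5}, finiteness of the generalised Euler characteristic of a module in $\fM_H(G)$ is equivalent to its Akashi series having non-vanishing leading coefficient at $T=0$, with the value then equal to the reciprocal $p$-adic absolute value of that leading coefficient; analogous statements apply to $\Sel(A/F^\cyc)$ and to each $J_v(A/F^\cyc)$ as $\Gamma$-modules. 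Since leading coefficients multiply through products, the theorem reduces to computing $\chi(\Gamma, J_v(A/F^\cyc))$ for each $v \in S'$ and verifying its finiteness.

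To evaluate the local Euler characteristic at $v \in S'$, fix a prime $w$ of $F^\cyc$ above $v$ with decomposition group $\Gamma_w \subseteq \Gamma$. Since $F^\cyc/F$ is the $\BZ_p$-constant field extension (unramified everywhere), $\Gamma_w$ is a closed subgroup of $\Gamma \cong \BZ_p$, so $\mathrm{cd}_p(\Gamma_w) \leq 1$. Shapiro's lemma identifies $\chi(\Gamma, J_v(A/F^\cyc))$ with $\chi(\Gamma_w, H^1(F^\cyc_w, A_{p^\infty}))$. Tate local duality, combined with the vanishing of $T_p A^*(F_v)$ (a consequence of the finiteness of $A^*_{p^\infty}(F_v)$ in our local field of residue characteristic different from $p$), gives $H^2(F_v, A_{p^\infty}) = 0$ and $\# H^1(F_v, A_{p^\infty}) = \# A^*_{p^\infty}(F_v)$. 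The Hochschild--Serre spectral sequence for $F^\cyc_w/F_v$ together with the vanishing of $H^2(F_v, A_{p^\infty})$ forces $H^1(\Gamma_w, H^1(F^\cyc_w, A_{p^\infty})) = 0$, while inflation--restriction yields
\[
0 \longrightarrow H^1(\Gamma_w, A_{p^\infty}(F^\cyc_w)) \longrightarrow H^1(F_v, A_{p^\infty}) \longrightarrow H^1(F^\cyc_w, A_{p^\infty})^{\Gamma_w} \longrightarrow 0.
\]
This sequence immediately forces $H^1(\Gamma_w, A_{p^\infty}(F^\cyc_w))$ to be finite (as a subgroup of a finite group) and gives
\[
\chi(\Gamma, J_v(A/F^\cyc)) = \frac{\# A^*_{p^\infty}(F_v)}{\# H^1(\Gamma_w, A_{p^\infty}(F^\cyc_w))},
\]
which is finite and explicit.

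Combining the two paragraphs above, each factor $f_{J_v(A/F^\cyc)}$ for $v \in S'$ has non-vanishing leading coefficient at $T = 0$, so the Akashi factorisation produces the two-sided equivalence of finiteness of $\chi(G, \Sel(A/F_\infty))$ and $\chi(\Gamma, \Sel(A/F^\cyc))$, and multiplicativity of the map $|\cdot|_p^{-1}$ on leading coefficients gives the product formula. The main obstacle I anticipate is the careful book-keeping of leading coefficients and units in $\Lambda(\Gamma)^\times$ across the several characteristic and Akashi series; in particular, one must ensure that stray units in $\Lambda(\Gamma)^\times$ do not affect the $p$-adic absolute value at $T=0$ and that the two-sided finiteness equivalence is pinned down by the unconditional local computation above, which shows that the factors at $v \in S'$ never independently obstruct finiteness or contribute to vanishing.
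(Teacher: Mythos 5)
There is a genuine gap in the first (and harder) half of your argument: the equivalence of the two finiteness statements. You derive it from the assertion that, by Proposition \ref{3-5}, ``finiteness of the generalised Euler characteristic of a module in $\fM_H(G)$ is equivalent to'' a condition on its Akashi series. Proposition \ref{3-5} is only one-directional: \emph{if} $Y$ has finite generalised Euler characteristic, \emph{then} the leading term of $\mathrm{Ak}(Y)$ is $\alpha_Y T^k$ with $k=\sum_i(-1)^i\corank_{\BZ_p}H^i(H,Y)^\Gamma$ and $\chi(G,Y)=|\alpha_Y|_p^{-1}$. No converse is available, and none can be: finiteness of the generalised Euler characteristic is a statement about the maps $d^i$ (equivalently the maps $\phi$ from invariants to coinvariants) having finite kernel and cokernel, and this is not detected by the Akashi/characteristic series. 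For instance, over $\Gamma$ the modules dual to $\Lambda(\Gamma)/(T^2)$ and to $\Lambda(\Gamma)/(T)\oplus\Lambda(\Gamma)/(T)$ have the same characteristic series $T^2$, yet the second has finite generalised $\Gamma$-Euler characteristic while the first does not (there $\phi$ is the zero map with infinite kernel and cokernel). So knowing that $\mathrm{Ak}(\Sel(A/F_\infty))$ factors as $f_{\Sel(A/F^\cyc)}\prod_{v\in S'}f_{J_v(A/F^\cyc)}$ up to units, and that the local factors are harmless, cannot by itself transfer finiteness in either direction between $\chi(G,\Sel(A/F_\infty))$ and $\chi(\Gamma,\Sel(A/F^\cyc))$. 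The paper establishes the equivalence by a different mechanism: the fundamental diagram comparing the defining sequences over $F^\cyc$ and over $F_\infty$, the snake lemma giving $H^i(H,\Sel(A/F_\infty))\simeq H^{i+2}(H,A_{p^\infty}(F_\infty))$ (finite under Hypothesis $(\mathbf{H})$), Lemma \ref{3-3}(2) to reduce finiteness of $\chi(G,\Sel(A/F_\infty))$ to the map $d^0$, hence to the finiteness of the generalised $\Gamma$-Euler characteristic of $\Sel(A/F_\infty)^H$, and finally Lemma \ref{fingec} applied to the four-term sequence relating $\Sel(A/F^\cyc)$ and $\Sel(A/F_\infty)^H$, whose end terms $\ker\alpha$ and $\coker\alpha$ are controlled by $H^1$ and $H^2$ of $H$ acting on $A_{p^\infty}(F_\infty)$ and by $\ker\gamma$. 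You would need to supply an argument of this kind; the Akashi series alone cannot do it.

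The second half of your proposal is sound and coincides with the paper's: once both generalised Euler characteristics are known to be finite, Theorem \ref{akashithm} together with Proposition \ref{3-5} (applied to $\Sel(A/F_\infty)$, to $\Sel(A/F^\cyc)$, and to the $J_v(A/F^\cyc)$, the latter having honest finite Euler characteristics) yields $\chi(G,\Sel(A/F_\infty))=\chi(\Gamma,\Sel(A/F^\cyc))\prod_{v\in S'}\chi(\Gamma,J_v(A/F^\cyc))$, and your local computation via Shapiro's lemma, Tate duality, the vanishing of $H^2(F_v,A_{p^\infty})$ and the inflation--restriction sequence correctly gives $\chi(\Gamma,J_v(A/F^\cyc))=\#A^*_{p^\infty}(F_v)/\#H^1(\Gamma_w,A_{p^\infty}(F^\cyc_w))$, exactly as in the paper.
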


\begin{proof}
Consider the following fundamental commutative diagram
\begin{footnotesize}
\begin{equation}\label{fundseq}
 \xymatrix{
  0  \ar[r]^{} & \Sel(A/F^{\rm cyc}) \ar[d]_{\alpha} \ar[r]^{} & H^1(G_S(F^{\rm cyc}),A_{p^\infty}) \ar[d]_{\beta} \ar[r]^{} & \bigoplus_{v\in S}J_v(A/F^{\rm cyc})\ar[d]^{\gamma} \ar[r]^{} & 0 & \\
  0 \ar[r]^{} & \Sel(A/F_\infty)^H \ar[r]^{} & H^1(G_S(F_\infty),A_{p^\infty})^H \ar[r]^{\phi} & \bigoplus_{v\in S}J_v(A/F_\infty)^H \ar[r] &  H^1(H, \Sel(A/F_\infty)) \ar[r] & \cdots.  &}
\end{equation}
\end{footnotesize}
The rows are exact. From the proof of Lemma \ref{AKlocal}, we know that the $p$-cohomological dimension of the decomposition group of $H$ at every prime of $F_\infty$ is at most one. It follows that the map $\gamma$ is surjective. By applying the Snake lemma, it follows that the map $\phi$ is also surjective. Applying the snake lemma to the diagram \eqref{fundseq}, we obtain the following exact sequence
\begin{equation}\label{snakeseq}
0\to \ker \alpha \to H^1(H,A_{p^\infty}(F_\infty)) \to \ker \gamma \to \coker \alpha \to H^2(H,A_{p^\infty}(F_\infty)) \to 0.
\end{equation}
From Lemma \ref{AKlocal}, we have $H^i(H, J_v(A/F_\infty))=0$ for every $i\geq 1$. Noting that $\phi$ is surjective and using the long exact sequence in $H$--cohomology associated to the short exact sequence \eqref{thmsurj-f001}, we obtain an isomorphism
\[H^i(H,\Sel(A/F_\infty))\simeq H^i(H,H^1(G_S(F_\infty),A_{p^\infty}))\qquad \textrm{for all $i\geq1$.}\]
Consider the Hochschild--Serre spectral sequence
\[H^p(H,H^q(G_S(F_\infty),A_{p^\infty}))\Rightarrow H^{p+q}(G_S(F^{\rm cyc}),A_{p^\infty}).\]
From Lemma \ref{cdpcyc}, for every $i\geq1$, we have
\[H^i(H,H^1(G_S(F_\infty),A_{p^\infty}))\simeq H^{i+2}(H,A_{p^\infty}(F_\infty)).\]
Thus, for all $i\geq1$,
\[H^i(H,\Sel(A/F_\infty))\simeq H^{i+2}(H,A_{p^\infty}(F_\infty)).\]
By assumption $(\mathbf{H})$, $H^i(H,\Sel(A/F_\infty))$ is finite for all $i\geq 1$. Consider now the following exact sequence
\[0\to H^1(\Gamma,H^i(H,\Sel(A/F_\infty)))\to H^{i+1}(G,\Sel(A/F_{\infty}))\to H^0(\Gamma,H^{i+1}(H,\Sel(A/F_\infty)))\to 0.\]
From Lemma \ref{3-3}, $\Sel(A/F_\infty)$ has a finite generalised $G$-Euler characteristic if and only if the map
\[ d^0 : H^0(G, \Sel(A/F_\infty)) = H^0(\Gamma, \Sel(A/F_\infty)^H) \stackrel{g}{\to} H^1(\Gamma, \Sel(A/F_\infty)^H) \stackrel{h}{\hookrightarrow} H^1(G, \Sel(A/F_\infty))  \]
has a finite kernel and cokernel. Since $\coker h = H^1(H,\Sel(A/F_\infty))^\Gamma\simeq H^{3}(H,A_{p^\infty}(F_\infty))^\Gamma$ is also finite, it follows that $d^0$ has a finite kernel and cokernel if and only if $g$ does. The latter is precisely equivalent to saying that $\Sel(A/F_\infty)^H$ has a finite generalised $\Gamma$-Euler characteristic.

On the other hand, from diagram (\ref{fundseq}), we have the following exact sequence
\begin{equation}\label{fundseq2}
  0\to \ker \alpha \to \Sel(A/F^{\rm cyc}) \to \Sel(A/F_\infty)^H \to \coker \alpha \to 0.
\end{equation}
Since $\ker \alpha\subseteq H^1(H,A_{p^\infty}(F_\infty))$, we know $\ker \alpha$ is finite. We claim that $\coker \alpha$ has a finite $\Gamma$-Euler characteristic. Assuming the claim, it follows from Lemma \ref{fingec} and (\ref{fundseq2}) that $\Sel(A/F_\infty)^H$ has a finite generalised $\Gamma$-Euler characteristic if and only if $\Sel(A/F^\cyc)$ does. This establishes the first assertion of the theorem. To prove the claim, note that by \eqref{snakeseq}, it suffices to show that
$\ker \gamma$ has a finite $\Gamma$-Euler characteristic. As observed in the proof of Theorem  \ref{ordreg}, there is a surjection
$$H^1(F_v,A_{p^\infty}) \twoheadrightarrow J_v(A/F^{\mathrm{cyc}})^\Gamma,$$
and since $H^1(F_v, A_{p^\infty})$ is finite, so is $J_v(A/F^{\mathrm{cyc}})^\Gamma$. Thus, we see that $({\rm ker}\gamma)^\Gamma$ is also finite. By \cite[Proposition A.1.7]{coates-sujatha}, it follows that ${\rm ker}(\gamma)$ has a finite $\Gamma$-Euler characteristic.

Now suppose that both generalised Euler characteristics in question are well-defined. Then by Theorem \ref{akashithm} and Proposition \ref{3-5}, we obtain
\begin{equation}\label{AkaEul}
\chi(G, \Sel(A/F_\infty)) = \chi(\Gamma, \Sel(A/F^\cyc)) \prod_{v\in S'}\chi(\Gamma,  J_v(A/F^{\mathrm{cyc}})).
\end{equation}
Thus, the final assertion of the theorem will follow once we show that
\begin{equation}\label{localGammaEul}
  \chi(\Gamma,  J_v(A/F^{\mathrm{cyc}})) = \frac{\#  A^*_{p^\infty}(F_v)}{\# H^1(\Gamma_w, A_{p^\infty}(F^{\cyc}_w))}
\end{equation}
 for each $v\in S'$. The restriction map gives the following short exact sequence
\[ 0 \to H^1(\Gamma_w, A_{p^\infty}(F^{\cyc}_w)) \to H^1(F_v, A_{p^\infty}) \to J_v(A/F^{\mathrm{cyc}})^\Gamma \to 0,\]
where, using Shapiro's lemma, we identify
\[J_v(A/F^{\rm cyc})^\Gamma\simeq H^0(\Gamma_w,H^1(F^{\rm cyc}_w,A_{p^\infty})).\]
By Tate local duality, $H^1(F_v, A_{p^\infty})$ is dual to $ A^*_{p^\infty}(F_v)$. Therefore,
\begin{equation}\label{localGammaEul1}\# J_v(A/F^{\mathrm{cyc}})^\Gamma = \frac{\#  A^*_{p^\infty}(F_v)}{\# H^1(\Gamma_w, A_{p^\infty}(F^{\cyc}_w))}.
\end{equation}
On the other hand, by Shapiro's lemma, we have
\begin{equation}\label{localGammaEul2} H^1\left(\Gamma, J_v(A/F^{\mathrm{cyc}})\right) \simeq H^1(\Gamma_w, H^1(F^\cyc_w, A_{p^\infty})).
\end{equation}
Now consider the following spectral sequence
$$H^r(\Gamma_w, H^s(F^\cyc_w, A_{p^\infty}))\Rightarrow H^{r+s}(F_v, A_{p^\infty}).$$
Since $\mathrm{cd}_p ~\Gamma_w =1$ and $H^2(F^\cyc_w, A_{p^\infty}) =0$ (cf. \cite[Theorem 7.1.8(i)]{nsw}), the spectral sequence degenerates, yielding an isomorphism
\begin{equation}\label{localGammaEul3} H^1(\Gamma_w, H^1(F^\cyc_w, A_{p^\infty})) \cong H^2(F_v, A_{p^\infty}).
\end{equation}
By Tate local duality, $H^2(F_v, A_{p^\infty})=0$ (see \cite[Lemma 1.12]{coates-sujatha}). Therefore,  combining (\ref{localGammaEul1}), (\ref{localGammaEul2}) and (\ref{localGammaEul3}), we obtain the identity (\ref{localGammaEul}), as required. This completes the proof of the theorem.
\end{proof}

\begin{remark} When $A=E$ is an elliptic curve, it follows from well-known properties of elliptic curves that
\[\chi(\Gamma,J_v(E/F^{\rm cyc}))=|L_v(E,1)|_p\]
where $L_v(E,1)$ denotes the Euler factor at $v$ of the complex $L$-function of $E$ over $F$. This recovers a formula of Zerbes in the number field setting and a corresponding result of Sechi in the function field setting using standard Euler characteristics.
\end{remark}

In \cite[Theorem 3.2.6]{LLTT}, Lai--Longhi--Tan--Trihan computed the generalised $\Gamma$-Euler characteristic
$$\chi(\Gamma, \Sel(A/F^\cyc))$$
in terms of the quantities appearing in the Birch--Swinnerton-Dyer conjecture. Combining their result with ours, we deduce that the generalised $G$-Euler characteristic $\chi(G, \Sel(A/F_\infty))$ can likewise be expressed in terms of invariants arising in the Birch--Swinnerton-Dyer conjecture. We conclude with the following corollary, which illustrates how the characteristic element fits into this framework.

\begin{cor} \label{GenEul coro}
Retain the setting of Theorem 9.1. Suppose further that Greenberg's semi-simplicity conjecture \ref{greenberg} holds for $A/F$. Set $s_A=\mathrm{corank}_{\BZ_p} \Sel(A/F)$. Then
\[ \frac{1}{T^{s_A}}\Phi_{\mathrm{reg}_F}(\xi_A) \Big|_{T=0} = \chi(\Gamma, \Sel(A/F^\cyc)) \prod_{v\in S'}\frac{\#  A^*_{p^\infty}(F_v)}{\# H^1(\Gamma_w, A_{p^\infty}(F^\cyc_w))},\]
where, for each $v\in S'$, $w$ is a fixed prime of $F^\cyc$ lying above $v$, and $\Gamma_w$ is the corresponding decomposition subgroup.
\end{cor}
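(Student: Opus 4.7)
The strategy is to combine four results established earlier in the paper: Theorem \ref{ordreg} supplies the order of vanishing, Lemma \ref{evalxi} identifies $\Phi_{\mathrm{reg}_F}(\xi_A)$ with the Akashi series of the Selmer group, Proposition \ref{3-5} relates the leading coefficient of an Akashi series to the generalised Euler characteristic, and Theorem \ref{GECthm} rewrites that Euler characteristic as the product appearing on the right-hand side.

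First, $\mathrm{reg}_F$ is the trivial representation (as $\Gal(F/F)$ is trivial), so the contragredient equals $\mathrm{reg}_F$ itself, and Lemma \ref{evalxi} yields
\[\Phi_{\mathrm{reg}_F}(\xi_A) \equiv \mathrm{Ak}(\Sel(A/F_\infty)) \pmod{\Lambda(\Gamma)^\times}.\]
Since Hypothesis $(\mathbf{H})$ implies Hypothesis $(\mathbf{G}) = (\mathbf{G}_F)$, Theorem \ref{ordreg} combined with Greenberg's semi-simplicity for $A/F$ gives $\ord_{T=0}(\Phi_{\mathrm{reg}_F}(\xi_A)) = s_A$. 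Moreover, Theorem \ref{MHG} provides the vanishing of the $\mu$-invariant of $X(A/F^\cyc)$, and under Hypothesis $(\mathbf{H})$ the cohomology factors $H^i(H, A_{p^\infty}(F_\infty))$ appearing in Theorem \ref{akashithm} are finite, hence their characteristic series are units; the remaining local factors $f_{J_v(A/F^\cyc)}$ are polynomials (their Pontryagin duals are finitely generated over $\BZ_p$). Consequently $\mathrm{Ak}(\Sel(A/F_\infty))$ is polynomial with a well-defined leading coefficient, i.e.\ there exists $\alpha_A \in \BZ_p \setminus \{0\}$ such that
\[\mathrm{Ak}(\Sel(A/F_\infty)) = \alpha_A T^{s_A} + O(T^{s_A+1}),\]
and therefore $T^{-s_A}\Phi_{\mathrm{reg}_F}(\xi_A)\big|_{T=0} \doteq \alpha_A$, where $\doteq$ denotes equality modulo a unit in $\BZ_p^\times$.

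To apply Proposition \ref{3-5}, I must check that $\Sel(A/F_\infty)$ has finite generalised $G$-Euler characteristic. By Theorem \ref{GECthm} this is equivalent to finiteness of $\chi(\Gamma, \Sel(A/F^\cyc))$, and by Lemma \ref{3-3}(2) the latter reduces in turn to showing the map $d^0: \Sel(A/F^\cyc)^\Gamma \to \Sel(A/F^\cyc)_\Gamma$ has finite kernel and cokernel. Via Pontryagin duality and the pseudo-isomorphism \eqref{Xcyc}, this reduces to analysing, for each irreducible distinguished factor $f$ (with $a_i = 1$ by Greenberg), the natural map $(\Lambda(\Gamma)/(f))^\Gamma \to (\Lambda(\Gamma)/(f))_\Gamma$, which is either the identity on $\BZ_p$ (when $f = T$) or a zero map into the finite module $\BZ_p/f(0)\BZ_p$ (when $f \neq T$). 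Both cases have finite kernel and cokernel, so the finiteness holds. Proposition \ref{3-5} then gives $\chi(G, \Sel(A/F_\infty)) = |\alpha_A|_p^{-1}$, and Theorem \ref{GECthm} rewrites this as the desired product.

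The only interpretive subtlety is that $\xi_A \in K_1(\Lambda(G)_{\Sigma^*})$ is defined only modulo $K_1(\Lambda(G))$, so $T^{-s_A}\Phi_{\mathrm{reg}_F}(\xi_A)|_{T=0}$ is well-defined only up to a unit in $\BZ_p^\times$; the stated equality in the corollary is understood modulo $\BZ_p^\times$, equivalently as an equality of $p$-adic absolute values, which is exactly what Proposition \ref{3-5} matches. With that convention, all four ingredients assemble directly into the corollary, and there is no genuine obstacle beyond the finiteness verification already sketched.
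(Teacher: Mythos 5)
Your proof is correct and follows essentially the same route as the paper, which simply combines Theorem \ref{ordreg} (order of vanishing $=s_A$ under Greenberg semi-simplicity) with Theorem \ref{GECthm}, the link between leading coefficient and Euler characteristic being Proposition \ref{3-5} as you use it. Your extra verification that Greenberg's semi-simplicity forces finiteness of $\chi(\Gamma,\Sel(A/F^{\mathrm{cyc}}))$, and your remark that the equality is to be read up to a unit in $\BZ_p^\times$, are sound clarifications of points the paper's one-line proof leaves implicit.
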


\begin{proof}
This follows from a combination of Theorems \ref{ordreg} and \ref{GECthm}.
\end{proof}

\subsection*{Acknowledgements}

The authors thank Ashay Burungale, Pierre Colmez, Mahesh Kakde, Antonio Lei, Katharina M\"{u}ller, Anwesh Ray, Jishnu Ray, Gianluigi Sechi, Ramdorai Sujatha, Fabien Trihan, Maria Valentino and Sarah Zerbes for their interest and comments. They are very grateful to the anonymous referee for valuable suggestions. This paper arose from the conference ``Special Values of $L$-functions'' held at Tsinghua Sanya International Mathematics Forum, and the authors thank Ashay Burungale, Emmanuel Lecouturier and Xin Wan for organising the conference that enabled the authors' collaboration. Yukako Kezuka also thanks Gianluigi Sechi for sharing his PhD thesis, which inspired their collaboration. Yukako Kezuka was partially supported by the ANR project Coloss, the European Union's Horizon 2020 research and innovation programme under the Marie Sklodowska-Curie grant agreement No.\,101026826, and JSPS KAKENHI Grant JP25K17227.\\

\noindent
\textbf{Conflict of interest.} On behalf of all authors, the corresponding author states that there is no conflict of interest.

\end{document}